\documentclass[11pt,a4paper, envcountsame,mathserif]{amsart}
\usepackage[usenames,dvipsnames]{color}

\usepackage[utf8]{inputenc}

 \usepackage[colorlinks,citecolor=blue,urlcolor=blue, linkcolor=blue, backref]{hyperref}

\usepackage[capitalise]{cleveref}		

 \usepackage{amsmath, amssymb, xspace}
 \usepackage{graphicx}

 \usepackage[all]{xy}

\usepackage{tikz}
\usetikzlibrary{arrows,snakes,positioning,backgrounds,shadows}

\newtheorem{theorem}{Theorem}[section]

\newtheorem{thm}[theorem]{Theorem}

\newtheorem{fact}[theorem]{Fact}

\newtheorem{proposition}[theorem]{Proposition}

\newtheorem{remark}[theorem]{Remark}

\newtheorem{prop}[theorem]{Proposition}

\newtheorem{claim}[theorem]{Claim}

\newtheorem{conjecture}[theorem]{Conjecture}

\newtheorem{lemma}[theorem]{Lemma}		

\newtheorem{corollary}[theorem]{Corollary}

\newtheorem{cor}[theorem]{Corollary}

\newtheorem{question}[theorem]{Question}

\theoremstyle{definition}
\newtheorem{definition}[theorem]{Definition}

\newcommand{\DII}{\Delta^0_2}
\newcommand{\NN}{{\mathbb{N}}}
\newcommand{\RR}{{\mathbb{R}}}

\newcommand{\QQ}{{\mathbb{Q}}}
\newcommand{\ZZ}{{\mathbb{Z}}}
\newcommand{\sub}{\subseteq}
\newcommand{\sN}[1]{_{#1\in \NN}}
\newcommand{\uhr}[1]{\! \upharpoonright_{#1}}
\newcommand{\ML}{Martin-L{\"o}f}
\newcommand{\SI}[1]{\Sigma^0_{#1}}
\newcommand{\PI}[1]{\Pi^0_{#1}}
\newcommand{\PPI}{\PI{1}}

\newcommand{\bi}{\begin{itemize}}
\newcommand{\ei}{\end{itemize}}
\newcommand{\bc}{\begin{center}}
\newcommand{\ec}{\end{center}}

\newcommand{\Halt}{{\ES'}}
\newcommand{\ES}{\emptyset}

\newcommand{\ria}{\rightarrow}
\newcommand{\tp}[1]{2^{#1}}
\newcommand{\ex}{\exists}
\newcommand{\fa}{\forall}
\newcommand{\lep}{\le^+}

\newcommand{\la}{\langle}
\newcommand{\ra}{\rangle}
\newcommand{\Kuc}{Ku{\v c}era}
\newcommand{\seqcantor}{2^{ \NN}}

\newcommand{\cantor}{\seqcantor}
\newcommand{\strcantor}{2^{ < \omega}}

\newcommand{\fao}[1]{\forall #1 \, }

\newcommand{\Opcl}[1]{[#1]^\prec}
\newcommand{\leT}{\le_{\mathrm{T}}}

\newcommand{\MLR}{\mbox{\rm \textsf{MLR}}}

\newcommand{\n}{\noindent}

\newcommand{\vsps}{\vspace{3pt}}

\newcommand{\vsp}{\vspace{6pt}}
\newcommand{\leb}{\mathbf{\lambda}}

\newcommand{\sss}{\sigma}
\newcommand{\aaa}{\alpha}

\DeclareMathOperator{\SAT}{SAT}
\DeclareMathOperator{\NP}{NP}
\DeclareMathOperator{\MPT}{MPT}

\newcommand{\lland}{\, \land \, }

\newcommand \seq[1]{{\left\langle{#1}\right\rangle}}

\newcommand\+[1]{\mathcal{#1}}

\newcommand{\wt}{\widetilde}
\newcommand{\ol}{\overline}
\newcommand{\ul}{\underline}

\newcommand{\lra}{\leftrightarrow}
\newcommand{\LR}{\Leftrightarrow}
\newcommand{\RA}{\Rightarrow}
\newcommand{\LA}{\Leftarrow}

\newcommand{\rapf}{\n $\RA:$\ }
\newcommand{\lapf}{\n $\LA:$\ }

\newcommand{\sssl}{\ensuremath{|\sigma|}}

\newcommand{\dom}{\ensuremath{\mathrm{dom}}}

\def\uh{\upharpoonright}

\DeclareMathOperator{\Hom}{Hom}

\DeclareMathOperator{\id}{id}

  \newcommand{\SR}{\mbox{\rm \textsf{SR}}}
  




\newcommand{\frb}{\mathfrak{b}}
\newcommand{\frd}{\mathfrak{d}}
\DeclareMathOperator{\cov}{cover}
\DeclareMathOperator{\non}{non}
\DeclareMathOperator{\cof}{cofin}
\DeclareMathOperator{\add}{add}

\begin{document}

\title{Logic Blog 2015}

 \author{Editor: Andr\'e Nies}

\maketitle


 {
The Logic Blog is for
\bi \item rapidly announcing    results related to logic
\item putting up results and their proofs for further research
\item archiving results for later use
\item getting feedback before submission to   a journal.   \ei

Each year's  blog is    posted on arXiv.org shortly after the year has ended.
\vsp
\begin{tabbing}

  \href{http://arxiv.org/abs/1504.08163}{Logic Blog 2014} \ \ \ \   \= (Link: \texttt{http://arxiv.org/abs/1504.08163})  \\

   \href{http://arxiv.org/abs/1403.5719}{Logic Blog 2013} \ \ \ \   \= (Link: \texttt{http://arxiv.org/abs/1403.5719})  \\

    \href{http://arxiv.org/abs/1302.3686}{Logic Blog 2012}  \> (Link: \texttt{http://arxiv.org/abs/1302.3686})   \\

 \href{http://arxiv.org/abs/1403.5721}{Logic Blog 2011}   \> (Link: \texttt{http://arxiv.org/abs/1403.5721})   \\

 \href{http://dx.doi.org/2292/9821}{Logic Blog 2010}   \> (Link: \texttt{http://dx.doi.org/2292/9821})  
     \end{tabbing}

\vsp

\n {\bf How does the Logic Blog work?}

\vsp

\n {\bf Writing and editing.}  The source files are in a shared dropbox.
 Ask Andr\'e (\email{andre@cs.auckland.ac.nz})  in order    to gain access.

\vsp

\n {\bf Citing.}  Postings can be cited.  An example of a citation is:

\vsp

\n  H.\ Towsner, \emph{Computability of Ergodic Convergence}. In  Andr\'e Nies (editor),  Logic Blog, 2012, Part 1, Section 1, available at
\url{http://arxiv.org/abs/1302.3686}.}

\vsp

\n {\bf Announcements on the wordpress front end.}  The Logic Blog has a \href{http://logicblogfrontend.hoelzl.fr/
}{front-end}  managed by Rupert H\"olzl.   

\n (Link: \texttt{http://logicblogfrontend.hoelzl.fr/})

\vsps
When you post source code on the logic blog in the dropbox, you can post a comment on the front-end alerting the community, and possibly summarising the result in brief.  The front-end is also good for posting questions. It allows MathJax.
 
The logic blog,  once it is on  arXiv.org,  produces citations on Google Scholar.
%
\newpage
\tableofcontents

 \part{Randomness via Kolmogorov complexity}

\section{Yu: A trivial observation on mutual information }

\begin{definition}[Levin] For oracles $x,y$, 
$$I(x:y)=\log \sum_{n,m}2^{-K(\langle m,n\rangle)-K^x(m)-K^y(n)+K(m)+K(n)}.$$
\end{definition}

\begin{definition}[Levin]
$$I'(x:y)=\log \sum_{n}2^{-K^x(n)-K^y(n)+K(n)}.$$
\end{definition}

Note that $I'(x,y) = \infty$ implies $I(x,y) = \infty$. In \cite{HW12}, Hirschfeldt and Weber asked whether there is a non-trivial $x$ so that for any $y$ with $I(x:y)=\infty$, $y$ must compute $x\oplus \emptyset'$. Actually by their own proof, the answer is negative.

\begin{proposition}
If $x$ is not $K$-trivial, then  $\{y\mid I'(x:y)=\infty\}$ is comeager.
\end{proposition}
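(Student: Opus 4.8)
The plan is to reduce $I'(x:y)=\infty$ to a purely combinatorial condition on $y$ and then arrange that condition on a dense $G_\delta$. Unwinding the definition, $I'(x:y)=\infty$ holds exactly when $\sum_n 2^{K(n)-K^x(n)-K^y(n)}=\infty$, so it suffices to produce, for comeager many $y$, infinitely many $n$ with $K(n)-K^x(n)-K^y(n)\ge 0$, i.e.\ infinitely many terms of the sum that are $\ge 1$. Write $d(n)=K(n)-K^x(n)$. The only consequence of non-$K$-triviality I will use is that $d$ is \emph{unbounded}: since $x$ is not $K$-trivial, $K(x\uh n)-K(n)$ is unbounded, while $K^x(x\uh n)\lep K(n)$ because an oracle machine can print $x\uh n$ from a shortest description of $n$; hence $K(x\uh n)-K^x(x\uh n)$ is unbounded, and transporting each string to its index under a fixed computable bijection $\strcantor\to\NN$ only perturbs $K$ and $K^x$ by $O(1)$, so $d$ is unbounded on $\NN$. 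In particular, for every bound $B$ and every $k$ there is $n>k$ with $d(n)\ge B$ (otherwise $d$ would be bounded above).

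Next I set up the coding device. Given a string $\tau$ and a number $n$, let $p_n$ be a shortest $\UM$-program with $\UM(p_n)=n$, so $|p_n|=K(n)$ and $p_n\in\dom(\UM)$. I claim that if $\tau ^\frown p_n$ is an initial segment of $y$, then $K^y(n)\lep 2\log|\tau|$, \emph{with an additive constant independent of $n$}: a prefix oracle machine handed a self-delimiting code of $|\tau|$ (of length $\le 2\log|\tau|+O(1)$) can, using oracle $y$, feed the bits of $y$ from position $|\tau|$ onward into $\UM$, and by prefix-freeness of $\dom(\UM)$ the machine $\UM$ halts after consuming exactly $p_n$ and returns $n$. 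Let $c$ be the additive constant arising from simulating this machine on the universal prefix oracle machine. The reason for coding $n$ via the self-delimiting $p_n$ rather than by writing $n$ in binary is precisely that the latter costs an extra $O(\log\log n)$ to delimit, which $d(n)$ need not dominate; using $p_n$ makes the bound on $K^y(n)$ depend only on $|\tau|$.

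Finally I assemble the dense $G_\delta$. For each $k$ put
\[
V_k\ =\ \bigcup\bigl\{\,[\tau ^\frown p_n]\ :\ \tau\in\strcantor,\ n>k,\ d(n)\ge 2\log|\tau|+c\,\bigr\}\ \subseteq\ \cantor .
\]
Each $V_k$ is open (a union of basic clopen sets; the non-effectivity of the defining condition is irrelevant), and $V_k$ is dense because any $\tau$ has an extension $\tau ^\frown p_n\in V_k$, where $n>k$ is chosen with $d(n)\ge 2\log|\tau|+c$ as permitted by the first paragraph. Hence $G:=\bigcap_k V_k$ is a dense $G_\delta$, in particular comeager. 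For $y\in G$ and each $k$, pick $\tau$ and $n>k$ with $\tau ^\frown p_n\sqsubset y$ and $d(n)\ge 2\log|\tau|+c$; then the coding device gives $K^y(n)\le 2\log|\tau|+c\le d(n)=K(n)-K^x(n)$, so the $n$-th term of the sum is $\ge 1$. As $k\to\infty$ the witnesses $n$ tend to infinity, so the sum diverges and $I'(x:y)=\infty$; thus $\{y: I'(x:y)=\infty\}\supseteq G$ is comeager.

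The step I expect to be the crux is the coding bound $K^y(n)\lep 2\log|\tau|$ with constant independent of $n$: making the $n$-dependence disappear is exactly what keeps the density requirement $d(n)\ge 2\log|\tau|+c$ satisfiable for arbitrarily large $n$, hence what lets the sum accumulate infinitely many unit-sized terms. Everything else is bookkeeping; note in particular that no limit-of-approximations argument is needed, since for $y\in G$ we exhibit explicit $n$ for which the genuine (not merely stagewise) inequality $K^y(n)+K^x(n)\le K(n)$ holds.
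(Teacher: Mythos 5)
Your proof is correct and follows essentially the same route as the paper's: use non-$K$-triviality to find objects where $K-K^x$ is arbitrarily large, then use genericity (your explicit dense open sets $V_k$) to make $K^y$ of those objects cheap enough that infinitely many terms of the sum are $\ge 1$. The only differences are expository — you work with indices and shortest programs $p_n$ instead of initial segments $x\uh k_n$ and you spell out the dense $G_\delta$ and the coding constant that the paper leaves implicit in ``sufficiently generic''.
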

\begin{proof}
 Essentially due to Hirschfeldt and Weber.  For any $n$, there is some $k_n$ so that $K^x(x\uh k_n)\leq K(x\uh k_n)-n$. If $y$ is sufficiently generic, then for any $n$, there is some $i_n>n$ so that $K^y(x\uh k_{i_n})\leq i_n$. Then $$I'(x:y)\geq \log \sum_{n}2^{-K^x(x\uh k_{i_n})-K^y(x\uh k_{i_n})+K(x\uh k_{i_n})}\geq \log \sum_{n}2^{i_n-i_n}=\infty.$$
\end{proof}

\section{Nies: An analog of  the coincidence  of $\le_{LR} $ and  $\le_{LK}$}

Let $Y,B$ be sets. Recall that $Y \le_{LR} B$ if $\MLR^B \sub \MLR^Y$. Kjos-Hanssen, Miller and Solomon~\cite{Kjos.Miller.ea:11} showed that this LR-reducibility coincides with LK-reducibility, where $Y \le_{LK} B$ if $\fa x \, K^B(x) \lep K^Y(x)$. 
We will weaken both relationships by replacing the objects c.e.\ in $Y$  by objects computable in $Y$.  This is applied in a recent manuscript by Greenberg, Miller and Nies on subclasses of the $K$-trivials.

A computable measure machine   (c.m.m.)  is a prefix free machine $M$ such that $\leb \Opcl  {\dom M}$ is a computable real \cite[3.5.14]{Nies:book}.
\begin{definition}  \mbox{} 

\n (i) We write that  $Y \le_{wLR} B$ if $\MLR^B \sub \SR^Y$. 

\n (ii) $Y \le_{wLK} B$ if $\fa x \, K^B(x) \lep K_{M}(x)$ for each computable measure machine $M$ relative to $Y$. \end{definition}
Note that these relations  are not transitive. Barmpalias, Miller and Nies~\cite{Barmpalias.Miller.ea:12} have shown that $Y \le_{wLR} B$ iff $Y$ is c.e.\ traceable by $B$: there is a computable bound $h$ such that each function  $f \leT Y$ has an $h$-bounded trace c.e.\ in $B$.   $\Halt \le_{wLR} B$ means that $B$ is ``weakly LR-hard". For c.e.\ sets, array recursive is the same as c.e.\ traceable, and it is known that such sets can be properly low$_2$. Hence, by jump inversion for ML-random sets, some  weakly LR-hard ML-random $\DII$ set $B$  is properly high$_2$, and in particular not LR-hard. Every random set above a smart $K$-trivial in the sense of \cite{Bienvenu.Greenberg.ea:nd} is not OW-random, hence LR-hard, and in particular high. So the diamond class of   weak LR-hardness is properly contained in the $K$-trivials. In fact by the result in \cite{Barmpalias.Miller.ea:12} it is contained in the diamond class of JT-hardness, which was previously known to be properly contained in the $K$-trivials (see \cite[8.5]{Nies:book}).

We adapt the proof of the Kjos-Miller-Solomon result given as Theorem 5.6.5 in Section 5.6 of \cite{Nies:book}. Item numbers below refer to \cite{Nies:book}. We only give proofs when they are not straightforward adaptations.
The following is our new version of  {\bf 5.6.5}.
\begin{thm}  \label{thm: LRLK} Let $Y,B$ be sets.  We have $Y \le_{wLK} B$ $\LR$ $Y \le_{wLR} B$. \end{thm}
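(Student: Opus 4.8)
The plan is to mirror the proof of the Kjos-Hanssen–Miller–Solomon theorem (5.6.5 in \cite{Nies:book}), making the adjustments forced by replacing "c.e.\ in $Y$" with "computable in $Y$" on both sides. Recall the original equivalence $Y\le_{LR}B \LR Y\le_{LK}B$ is proved by two implications, and the main content of the harder direction is the construction, from a KC (Kraft–Chaitin) set relative to $Y$, of a $\Sigma^0_1(B)$ Solovay-type test capturing a given $B$-ML-random real; here we must instead work with a \emph{computable measure machine} relative to $Y$ and build a \emph{$\Sigma^0_1$} test (no $B$-oracle) whose success set is a $\Pi^0_1(\text{Schnorr})$ class, i.e.\ a Schnorr test, since the left side now asserts $\MLR^B\sub\SR^Y$.

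First I would prove $Y\le_{wLK}B \RA Y\le_{wLR}B$. Assume $\fa x\, K^B(x)\lep K_M(x)$ for every c.m.m.\ $M$ relative to $Y$. Let $Z\in\MLR^B$; I must show $Z\in\SR^Y$, i.e.\ $Z$ passes every Schnorr test relative to $Y$. Following the structure of the original argument, from a putative $Y$-Schnorr test $(V_k)$ covering $Z$ one extracts, by the standard correspondence between Schnorr tests and computable measure machines (3.5.16–3.5.21 in \cite{Nies:book}), a c.m.m.\ $M$ relative to $Y$ with $K_M(Z\uh n)\le^+ n - k$ along suitable lengths $n$ whenever $Z$ is in the $k$-th component. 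By hypothesis $K^B(Z\uh n)\lep K_M(Z\uh n)\le^+ n-k$ for unboundedly many $k$, contradicting that $Z$ is ML-random relative to $B$ (the Levin–Schnorr theorem relativized to $B$). The one point needing care is that the measure-machine/Schnorr-test translation must be carried out \emph{uniformly relative to $Y$} and must genuinely yield a c.m.m.\ (total measure a $Y$-computable real), which it does because a Schnorr test component has exactly computable measure by definition; I would spell out this uniformity.

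For the converse $Y\le_{wLR}B \RA Y\le_{wLK}B$, assume $\MLR^B\sub\SR^Y$ and fix a c.m.m.\ $M$ relative to $Y$; I want $K^B\lep K_M$. As in the original proof one argues contrapositively: if $K^B(x)\lep K_M(x)$ fails, i.e.\ $K_M(x)< K^B(x)-c$ for infinitely many $x$ and every $c$, one manufactures a $B$-ML-random real that is not $Y$-Schnorr random. Concretely, the set $\{x : K_M(x)\le |x|-c\}$ has measure (as a set of strings weighted by $2^{-|x|}$) that is \emph{$Y$-computably bounded}, because $\dom M$ has $Y$-computable measure — this is precisely where the c.m.m.\ hypothesis, rather than a mere KC set, is used — so the associated nested sequence of open sets forms a \emph{$Y$-Schnorr test}, not just a $Y$-Solovay test. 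Then the original combinatorics, which from the failure of $K^B\lep K_M$ build a $B$-random real lying in all these components, goes through to place that real outside $\SR^Y$, contradicting $\MLR^B\sub\SR^Y$.

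The main obstacle I anticipate is exactly the bookkeeping around \emph{computability of measures}: on the $\le_{wLR}$ side one has Schnorr tests (components of exactly computable measure) rather than ML-tests, and on the $\le_{wLK}$ side one has c.m.m.'s rather than arbitrary prefix-free machines relative to $Y$, so at every step where the original proof invokes "c.e." or "$\Sigma^0_1$" one must check that the stronger "computable measure / Schnorr" version survives — in particular that the test one builds has a $Y$-computable (not merely $Y$-left-c.e.) total measure, and that the machine one extracts is a genuine c.m.m. Since Schnorr randomness is characterized by computable measure machines (the relativized form of 3.5.21 in \cite{Nies:book}), these checks should all go through, but they are the crux and I would present them carefully; the rest is a routine transcription of 5.6.5.
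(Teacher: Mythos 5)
Your forward direction ($Y \le_{wLK} B \RA Y \le_{wLR} B$) is fine and is essentially the paper's one-line argument: Levin--Schnorr relativized to $B$ together with the characterisation of Schnorr randomness relative to $Y$ via computable measure machines relative to $Y$ (and your remark that the Schnorr-test/c.m.m.\ translation relativizes uniformly is the right point to check).

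The converse is where the content lies, and your plan for it has a genuine gap. You propose to argue contrapositively: from the failure of $K^B \lep K_M$ you would ``manufacture'' a $B$-ML-random real failing the $Y$-Schnorr test with components $\Opcl{\{x : K_M(x)\le |x|-c\}}$, appealing to ``the original combinatorics''. But no such combinatorics exists in the proof you are adapting: the Kjos-Hanssen--Miller--Solomon argument (5.6.5 in \cite{Nies:book}) never constructs a random real from the failure of $K^B \lep K^Y$; it proves the hard direction positively, via the covering characterisation of $\le_{LR}$ (5.1.10/5.6.3: every $\SI 1 (Y)$ class of measure $<1$ is contained in a $\SI 1 (B)$ class of measure $<1$), its transfer to $f$-small $Y$-c.e.\ sets (5.6.4), and finally the Machine Existence (Kraft--Chaitin) Theorem relative to $B$ applied to the bounded request set $I=\{\la \sssl, y\ra \colon M(\sss)=y\}$. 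Your sketch gives no way to pass from ``for every $c$ there is a string $x$ with $K_M(x)<K^B(x)-c$'' to a single $Z\in \MLR^B$ with $K_M(Z\uh n)\le n-c$ for every $c$ and suitable $n$: the witnessing strings need not be $M$-compressible below their length, need not be comparable, and need not line up along one sequence, and ensuring such a $Z$ is simultaneously $B$-ML-random is precisely the difficulty the covering machinery is designed to avoid. The paper's route instead is: $I$ is $f$-small for $f(\la r,y\ra)=r$, and $\mu_f(I)$ is computable in $Y$ because $M$ is a c.m.m.; the new versions of 5.6.3 and 5.6.4 (whose proofs contain the one genuinely new point, that computability of $\sum_n a_n$ yields computability of $\prod_n(1-a_n)$, so the relevant class has $Y$-computable measure) give an $f$-small $B$-c.e.\ set $\wt R\supseteq I$; removing finitely many elements makes it a bounded request set relative to $B$, and the KC theorem relative to $B$ yields $K^B(y)\lep K_M(y)$. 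So for the hard direction you need to state and prove those covering lemmas under the added ``measure computable in $Y$'' hypotheses; the contrapositive construction you outline is not, as it stands, an argument.
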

\begin{proof} \rapf This follows from the characterisation of ML-randomness via $K$ (Levin-Schnorr), and the characterisation of Schnorr-randomness via $K_M$ for c.m.m.\ $M$, all relativized appropriately. 

\lapf  
That implication depended on a number of foregoing results, some of them  in Section 5.1.

\n New version of  {\bf 5.1.10}, ``partially" relativized to $B$.
\begin{prop} \label{thm:char Low R}  The following are equivalent for a set~$A$.
\bi
\item  $Y \le_{wLR} B$
\item  For each  computable measure  machine~$M$ relative to $Y$,  there is a  $\SI 1 (B)$ set~$S$ such that  \begin{equation} \label{AM:ENS1}  \leb S < 1 \
\land\ \fa \,  z  [K_{M}(z) \le |z|-1 \rightarrow [z] \sub S].  \end{equation} \ei
\end{prop}

\vsp

\n New version of  {\bf 5.6.3} (which extends 5.1.10).
 \begin{lemma} \label{cor:Bjorns great result2} $Y\le_{wLR} B$  $\LR$   each  $\SI{1}(Y)$ class~$G$ such that  $\leb G <1$ \emph{and $\leb G$ is computable} in $Y$  is contained in   a $\SI{1}(B) $ class $S$ such that  $\leb S <1$.
\end{lemma}

For a function $f \colon \NN \ria \NN$, let $\mu_f$ be  the measure on $\mathcal{P}(\NN)$ given by $\mu_f(\{n\}) = \tp{-f(n)}$.  A set $I \sub \NN$ is called \emph{$f$-small}   if   $\mu_f (I)$ is finite.

\n New version of  {\bf 5.6.4}.

\begin{lemma} \label{lem:simpson's great}    $Y \le_{wLR} B$  $\RA$  for each  computable function~$f$,
 each~$f$-small~$Y$-c.e.\ set~$I$ \emph{such that $\mu_f(I)$ is computable in $Y$} is contained in an~$f$-small~$B$-c.e.\ set~$R$.  \end{lemma}

\begin{proof} As before, we use the  fact that  for   a sequence of real numbers $(a_n)\sN{n}$ such that  $0 \le a_n <1$ for each $n$,   we have

\begin{equation} \label{eqn:a i sum prod} \sum_{n=0}^\infty  a_n< \infty \LR  \prod_{n=0}^\infty  (1-a_n)>0. \end{equation}
  To see this  one works with  $g(x) = - \ln (1-x)$ for $x \in [0,1)_{\mathbb{R}}$. Since $e^y \ge 1+y$ for each $y\in \mathbb{R}$, we have  $x \le g(x)$  for each~$x$. On the other hand  $g'(x) = 1/(1-x)$, so $g'(0) =1 = \lim_{x\ria 0} (g(x) -g(0))/x$. Hence  there is $\varepsilon >0$ such that $g(x) \le 2x$ for each $x \in [0,\varepsilon)$.

In the present setting, we also need that if the sum is a (finite) computable real,  then so is  the product. We may assume that each tail sum   is less than $\epsilon$. It now suffices to verify  that  $\prod_{n=k}^\infty  (1-a_k) \le g(\sum_{n=k}^\infty a_k)$ for each $n$. 
 
As before we use  (\ref{eqn:a i sum prod}) to  infer   Lemma~\ref{lem:simpson's great} from  the implication ``$\RA$'' of Lemma~\ref{cor:Bjorns great result2}.  It suffices to observe that the class $P$ defined in the original version now has $Y$-computable positive measure because the product is $Y$-computable. Now let $G = \cantor - P$ and apply $\RA$ of~\ref{cor:Bjorns great result2}.    \end{proof}
%
%

We can now complete the proof of \lapf of Thm.\ref{thm: LRLK}. Let~$f$ be the computable function given by $f(\la r,y\ra) = r$ (the book has the typo $2^r$ there). Let  $M$ be  a c.m.m.\ relative to $Y$.  The  set $I= \{\la \sssl, y\ra\colon \,  M(\sss) = y\}$ is a bounded request set relative to~$A$ and hence~$f$-small, and  $\mu_f(I)$ is computable in $Y$. So by   Lemma~\ref{lem:simpson's great}, $I$  is contained in   an~$f$-small~$B$-c.e.\ set $\wt R $. Let $R \sub \wt R$ be a bounded request set relative to~$B$ such that $\wt R -R$ is finite. Then, applying to~$R$   the  Machine Existence Theorem \cite[2.2.17]{Nies:book}  relative to~$B$, we may conclude that  $\fao y K^B(y)\lep K_M(y)$.  
\end{proof}
\part{Randomness via algorithmic tests}

\section{Downey, Nandakumar and Nies: \\   Multiple recurrence and randomness via algorithmic tests}

We determine the level of randomness needed for a point so that the multiple recurrence theorem of Furstenberg holds for iterations starting at the point. 
\subsection{Background in ergodic  theory} Let $(X, \+ B, \mu)$  be a probability space. A measurable  operator $T\colon  X \to X$    is called \emph{measure preserving} if   $\mu T^{-1}(A) = \mu A$ for each $ A \in \+ B$.

\begin{theorem}[Furstenberg strong multiple recurrence theorem; see~\cite{Furstenberg:2014} Thm.\ 7.15]  \label{thm:FurMRT} \  \\
\n  Let $(X, \+ B, \mu)$ be a probability space. Let $T_1, \ldots, T_k$ be commuting measure preserving operators on $X$. 
Let    $A \in \+B$ with $\mu A > 0$. We have 
\[ 0< \liminf_N \frac 1 N \sum_{n=1}^N  \mu ( \bigcap_{1 \le i \le k}  T_i^{-n}(A)) \]
\end{theorem}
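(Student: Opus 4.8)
The plan is to run Furstenberg and Katznelson's ergodic-theoretic argument. First I would cut the system down to size, reducing to the case where the action of $G=\la T_1,\dots,T_k\ra$ is ergodic: replacing $\+B$ by the countably generated sub-$\sigma$-algebra generated by the $G$-orbit of $A$ we may assume $(X,\+B,\mu)$ is a Lebesgue space, and then the ergodic decomposition of the $G$-action together with Fatou's lemma does the job --- on each ergodic component on which $A$ has positive measure (a set of components of positive total measure) the displayed $\liminf$ will be positive by the ergodic case, and this integrates back up to positivity for $\mu$. Note that the quantity to be bounded only involves the diagonal $n\mapsto(T_1^n,\dots,T_k^n)$, but the induction below runs inside the full $\ZZ^k$-action.

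The core of the ergodic case is to show that the class $\mathrm{SZ}$ of systems with the \emph{multiple recurrence property} --- those for which $\liminf_N \tfrac1N\sum_{n=1}^N \mu(\bigcap_{1\le i\le k}T_i^{-n}(A))>0$ for \emph{every} $A$ of positive measure --- is closed under the operations from which every ergodic $\ZZ^k$-system is assembled. Concretely I would establish: (i) the one-point system is in $\mathrm{SZ}$; (ii) if a factor lies in $\mathrm{SZ}$ and the given system is a \emph{compact} (relatively isometric) extension of it, then the system is in $\mathrm{SZ}$; (iii) if a factor lies in $\mathrm{SZ}$ and the system is a \emph{primitive} extension of it --- the multidimensional surrogate for a relatively weakly mixing extension --- then the system is in $\mathrm{SZ}$; and (iv) $\mathrm{SZ}$ is closed under inverse limits of increasing towers of factors. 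Granting (i)--(iv), one invokes the Furstenberg--Zimmer structure theorem: every ergodic $\ZZ^k$-system is the top of a (possibly transfinite) tower of extensions, each compact or primitive over its predecessor, with inverse limits at limit stages and the trivial system at the bottom; transfinite induction along that tower then places our system in $\mathrm{SZ}$, which is exactly the claim.

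I expect step (iii) to be the main obstacle. For a single transformation it is the relatively weakly mixing case, handled by a van der Corput / Hilbert-space inequality showing the off-diagonal terms have density zero, so that the multiple averages are controlled by those of the factor; for $k$ commuting transformations the combinatorics are substantially heavier, needing a secondary induction on $k$, the fact that for these averages a relatively independent product of weakly mixing extensions behaves like the least of them, and careful bookkeeping of how the lower bound degrades across the extension. By contrast (i), (ii) and (iv) are comparatively routine: the compact case is an almost-periodicity and pigeonhole argument (the $L^2$-orbit of $\mathbf 1_A$ is precompact, so one follows a simultaneous near-recurrence time supplied by the factor), and the inverse-limit case follows since $A$ is $L^1$-approximable by sets measurable at some finite stage. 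For the details of all four steps I would refer to \cite{Furstenberg:2014}.
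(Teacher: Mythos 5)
The paper does not prove this theorem at all: it is stated as classical background, with the proof delegated to the citation (\cite{Furstenberg:2014}, Thm.\ 7.15), and what the paper actually proves are only the corollaries derived from it (e.g.\ the deduction of Cor.\ 3.3 from Cor.\ 3.2). Your sketch is a correct outline of the standard Furstenberg--Katznelson argument from exactly that source: pass to the $\ZZ^k$-action generated by $T_1,\ldots,T_k$, reduce to the ergodic case by ergodic decomposition plus Fatou, introduce the class of SZ systems, show it is preserved under the relevant extensions and under inverse limits, and finish by transfinite induction along the Furstenberg--Zimmer tower. One presentational point: in the multidimensional setting the structure theorem is usually stated with \emph{primitive} extensions, in which $\ZZ^k$ splits into a subgroup acting relatively compactly over the factor and a complementary subgroup acting relatively weakly mixing; so your items (ii) and (iii) are really the two halves of a single primitive-extension step rather than two independent kinds of extension, and, as you anticipate, the relatively weakly mixing half (via the van der Corput estimate and the secondary induction on $k$) is where the real work lies. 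Since you defer those substantial steps to the literature, your text is a proof sketch rather than a proof, but that is consistent with how the paper itself treats the statement, namely as a quoted known result.
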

\n As a consequence,    there is a positive measure set of points so that  the same number  of iterations   of each of the operators  $T_i$,  starting from each of the points, ends in $A$. 
\begin{cor}[Furstenberg  multiple recurrence theorem]  \label{prop: FurMRT2} \ \\ With the hypotheses of Thm.\ \ref{thm:FurMRT},  there is 
$n>0$ such that $0< \mu \bigcap_i T_i^{-n}(A)$.   \end{cor}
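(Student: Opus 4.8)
The plan is to derive Corollary~\ref{prop: FurMRT2} from Theorem~\ref{thm:FurMRT} by an averaging/pigeonhole argument. Theorem~\ref{thm:FurMRT} tells us that the Cesàro averages
\[
a_N = \frac{1}{N}\sum_{n=1}^N \mu\!\left(\bigcap_{1\le i\le k} T_i^{-n}(A)\right)
\]
satisfy $\liminf_N a_N = c$ for some $c>0$. The key observation is that each summand is a nonnegative real, so $0 < c$ forces infinitely many of the individual terms $\mu(\bigcap_i T_i^{-n}(A))$ to be bounded below by a positive constant — in particular at least one of them must be strictly positive.

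First I would make this precise by contradiction: suppose that $\mu(\bigcap_i T_i^{-n}(A)) = 0$ for every $n > 0$. Then every summand in the definition of $a_N$ vanishes, so $a_N = 0$ for all $N$, whence $\liminf_N a_N = 0$, contradicting Theorem~\ref{thm:FurMRT}. Therefore there exists some $n>0$ with $\mu(\bigcap_i T_i^{-n}(A)) > 0$, which is exactly the statement of the corollary. One could phrase the same argument directly: pick $N$ large enough that $a_N > 0$ (possible since $\liminf a_N = c > 0$ means $a_N > c/2$ eventually), and observe that a sum of nonnegative terms being positive forces at least one term to be positive; that term witnesses the required $n$ (and $1 \le n \le N$, so in particular $n>0$).

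There is essentially no obstacle here — the corollary is a soft consequence of the theorem, and the only thing to be careful about is the trivial point that the averaging runs over $n \ge 1$, so the $n$ produced automatically satisfies $n>0$ as required; no measure-theoretic or dynamical input beyond the statement of Theorem~\ref{thm:FurMRT} is needed. The entire content is the elementary fact that a Cesàro average of nonnegative numbers with positive $\liminf$ cannot have all terms equal to zero.
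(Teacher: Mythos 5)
Your argument is correct and is exactly the (implicit) one the paper intends: the corollary is stated as an immediate consequence of Theorem~\ref{thm:FurMRT}, since a positive $\liminf$ of Ces\`aro averages of nonnegative terms forces some term $\mu\bigl(\bigcap_i T_i^{-n}(A)\bigr)$ with $n\ge 1$ to be strictly positive. Nothing further is needed.
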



For this paper the following variant of Cor.\  \ref{prop: FurMRT2} will matter.
\begin{cor} \label{prop:FurMRT3} With the hypotheses of Thm.\ \ref{thm:FurMRT}, for $\mu$-a.e.\ $x \in A$, there is an $n>0$ such that $x \in \bigcap_i T_i^{-n}(A)$. \end{cor}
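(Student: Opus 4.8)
The plan is to deduce this from Corollary~\ref{prop: FurMRT2} (or equivalently from Theorem~\ref{thm:FurMRT}) by a standard ``bad set'' recurrence argument; the one point requiring a small trick is to arrange that the set itself — not merely some positive-measure subset of $A$ — reappears in the returning intersection. First I would fix notation: for $n \ge 1$ set $B_n = \bigcap_{1 \le i \le k} T_i^{-n}(A)$, the set of $x$ for which the common iteration number $n$ sends every $T_i$-orbit back into $A$. Since each $T_i$ is measurable and $A \in \+B$, each $B_n$ lies in $\+B$, and hence the ``bad set'' $E := A \setminus \bigcup_{n \ge 1} B_n = \dset{x \in A}{\fa n \ge 1 \ x \notin B_n}$ is measurable. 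The assertion of the corollary is exactly that $\mu(E) = 0$.

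The key observation is that $E$ is disjoint from each of its own common returns: for every $n \ge 1$,
\[ E \cap \bigcap_{1 \le i \le k} T_i^{-n}(E) = \ES. \]
Indeed, if $x$ belonged to this intersection then $T_i^n(x) \in E \subseteq A$ for every $i$, so $x \in B_n$; but $x \in E$ forces $x \notin B_n$, a contradiction.

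Now suppose for contradiction that $\mu(E) > 0$. I would apply Corollary~\ref{prop: FurMRT2} to the enlarged family of commuting measure preserving operators $\id, T_1, \dots, T_k$ and to the set $E$ in place of $A$ (the identity commutes with each $T_i$ and is measure preserving, so the hypotheses hold). This produces some $n > 0$ with $0 < \mu\bigl(\id^{-n}(E) \cap \bigcap_{1 \le i \le k} T_i^{-n}(E)\bigr) = \mu\bigl(E \cap \bigcap_{1 \le i \le k} T_i^{-n}(E)\bigr)$, contradicting the previous paragraph. Hence $\mu(E) = 0$, which is precisely the claim.

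I expect the only step needing care is the legitimacy of adjoining $\id$ to the family, so that $E$ itself rather than an arbitrary subset of $A$ occurs in the intersection delivered by Corollary~\ref{prop: FurMRT2}; everything else is a routine Poincar\'e-style bad-set argument, and no quantitative ($\liminf$) information from Theorem~\ref{thm:FurMRT} is actually needed.
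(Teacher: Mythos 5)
Your proof is correct, but it takes a genuinely different route from the one posted in the blog. The posted argument deduces Cor.~\ref{prop:FurMRT3} from Cor.~\ref{prop: FurMRT2} by a recursive exhaustion: with $R_n = \bigcap_i T_i^{-n}(A)$ it picks return times $n_1 < n_2 < \cdots$ and a descending chain $A = A_0 \supseteq A_1 \supseteq \cdots$ with $A_{p+1} = A_p - R_{n_{p+1}}$, and then shows the residual set $A_N = \bigcap_p A_p$ is null because otherwise it would meet some $R_n$ in positive measure, contradicting the minimal choice of the $n_p$. You instead run a one-shot bad-set argument: put $E = A \setminus \bigcup_{n\ge 1} R_n$ (your $B_n = R_n$), observe $E \cap \bigcap_i T_i^{-n}(E) = \emptyset$ for every $n\ge 1$, and apply Cor.~\ref{prop: FurMRT2} a single time to the enlarged commuting family $\id, T_1, \dots, T_k$ acting on $E$. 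This is shorter, and your device of adjoining $\id$ is exactly what is needed to get the set itself into the recurrent intersection: as stated, Thm.~\ref{thm:FurMRT} and Cor.~\ref{prop: FurMRT2} only give $\mu\bigl(\bigcap_i T_i^{-n}(A)\bigr) > 0$, not $\mu\bigl(A \cap \bigcap_i T_i^{-n}(A)\bigr) > 0$, whereas the exhaustion proof tacitly uses the latter when it asserts $\mu(R_n \cap A_N) > 0$ for the residual set (and, similarly, when it assumes a suitable $n_{p+1}$ always exists). So your version both streamlines the argument and makes explicit a strengthening that the posted proof relies on implicitly; what the exhaustion argument buys in return is the explicit sequence of return times $n_p$ witnessing recurrence on positive-measure pieces of $A$, which is more information than the corollary itself requires.
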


This statement   clearly yields Cor.\ \ref{prop: FurMRT2} because it implies that $  \bigcap_i T_i^{-n}(A)$   has positive measure for some $n$.  Conversely,  let us  show that  Cor.\ \ref{prop: FurMRT2} yields   Cor.\  \ref{prop:FurMRT3}. 
Let  $R_n = \bigcap_i T_i^{-n}(A)$.  We recursively define a sequence $\seq {n_p}_{p< N}$ of numbers and a descending sequence $\seq {A_p}_{p< N}$ of sets, where $0<N \le \omega$.

Let $n_0 = 0$, and  $A_0 = A$. Suppose  $n_p$ and $A_p$ have been defined. If  $\mu A_p =0$ let $N=p+1$ and finish. Otherwise, let $n_{p+1}$ be the least $n > n_p$ such that $\mu  (A_p \cap R_n) >0$, and   let  $A_{p+1} = A_p - R_n$. 

Let $A_N= \bigcap_{p<N} A_p$. Then $\mu A_N =0$. This is clear if $N$ is finite. If $N= \omega$ and $\mu A_N >0$, by Cor.\ \ref{prop: FurMRT2}  there  is $n$ such that 
$\mu( R_n \cap A_N) > 0$. This contradicts the definition of $A_{p+1}$ where $n_p < n \le n_{p+1}$.

Since $\mu A_N =0$,  Cor.\  \ref{prop:FurMRT3}  follows.

We will be mainly interested in the   special case where  $T_i = V^i$ for a measure-preserving  operator $V$. 
\begin{cor} \label{cor:MRT} Let $(X, \+ B, \mu)$ be a probability space. Let $V$ be  a  measure preserving operator.  Let $A \in \+ B$  and $\mu A > 0$. For each $k$, for $\mu$-a.e.\ $x \in A$   there is $n$ such that $\fa i. {1 \le i \le k}  \, [x \in V^{-ni}( A)]$. 
\end{cor}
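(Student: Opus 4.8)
The plan is to derive Corollary~\ref{cor:MRT} as a direct instance of Corollary~\ref{prop:FurMRT3}. Given the measure-preserving operator $V$ and the fixed $k$, I would set $T_i = V^i$ for $1 \le i \le k$. The first thing to check is that these operators satisfy the hypotheses of Thm.~\ref{thm:FurMRT}: each $V^i$ is measure preserving because a composition of measure-preserving maps is measure preserving (and $\mu (V^i)^{-1}(A) = \mu (V^{-1})^i(A) = \mu A$ by induction), and they all commute with one another since they are powers of the single operator $V$, so $V^i \circ V^j = V^{i+j} = V^j \circ V^i$.

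Once the hypotheses are verified, I would simply invoke Corollary~\ref{prop:FurMRT3} with this choice of $T_1,\dots,T_k$: it gives, for $\mu$-a.e.\ $x \in A$, an $n > 0$ with $x \in \bigcap_{1 \le i \le k} T_i^{-n}(A) = \bigcap_{1 \le i \le k} (V^i)^{-n}(A) = \bigcap_{1 \le i \le k} V^{-ni}(A)$. Rewriting the membership in the intersection as the quantified statement $\fa i.\, {1 \le i \le k}\, [x \in V^{-ni}(A)]$ is exactly the conclusion of Corollary~\ref{cor:MRT}, using the identity $(V^i)^{-n} = V^{-ni}$ which follows from the same composition-of-maps bookkeeping.

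There is essentially no obstacle here; the only mildly non-routine point is the identification $(V^i)^{-n}(A) = V^{-ni}(A)$, which is just the statement that preimage under iteration composes, i.e.\ $(V^i)^{-n} = (V^{ni})^{-1}$ as a set map on $\+ B$, and this holds for any map $V\colon X \to X$ regardless of measurability considerations. So the proof reduces to: specialize the operators, check commutativity and measure-preservation, apply Corollary~\ref{prop:FurMRT3}, and reindex.
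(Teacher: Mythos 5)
Your proposal is correct and is exactly the route the paper intends: Corollary~\ref{cor:MRT} is stated there as the special case $T_i = V^i$ of Corollary~\ref{prop:FurMRT3}, with no further argument given, and your verification of commutativity, measure preservation, and the reindexing $(V^i)^{-n}(A) = V^{-ni}(A)$ just makes that specialization explicit.
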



In fact we will mostly assume that   $(X, \+ B, \mu)$   is  Cantor space $\cantor$ with the product measure $\leb$.  In the following $X,Y,Z$ will denote elements of Cantor space. We will work with   the shift  $T$ as the measure preserving operator. Thus, $T(Z)$ is obtained by deleting the first entry of the bit  sequence $Z$. We note that this operator    is (strongly)  mixing, and hence strongly ergodic, namely, all of its powers are ergodic.
We will   write   $Z_n$ for $T^n (Z)$,   the tail of $Z$ starting at bit position $n$.  Thus, for any $\+ C \sub \cantor$,  $Z \in \cantor$ and $k \in \NN$, $Z \in T^{-k} (\+ C) \lra Z_k \in \+ C$.

For a set of strings $S \sub \strcantor$, by $\Opcl S$ we denote the open set $\{ Y  \in \cantor \colon \, \ex \sss \in S \, [\sss \prec Y]\}$. 
We write $\leb \Opcl S$ for the measure of this set, namely $\leb (\Opcl S)$

\subsection{The connection with algorithmic randomness}

For the remainder of the paper, we consider multiple recurrence for  closed sets.   Note that for (multiple) recurrence in the sense of Cor.\ \ref{prop: FurMRT2},   this  is not an essential restriction, because any set of positive measure contains a closed subset of positive measure. 
The following is our central definition. Let $T \colon \cantor \to \cantor$ denote the shift operator.
\begin{definition} Let $\+ P \sub \cantor$ be closed, and let  $Z \in \cantor$. We say that   $Z$ is  \emph{$k$-recurrent  in $\+ P$}  if  there is $n \ge 1$ such that 
\[ \tag{$\diamond$} Z \in \bigcap_{1 \le i \le k} T^{-ni} (\+ P). \]
We  say that $Z$ is \emph{multiply recurrent} in $\+ P$ if $Z$ is $k$-recurrent in $\+ P$  for each $k \ge 1$.
\end{definition}
In other words, $Z$ is $k$-recurrent in $\+ P$ if there is $n$ such that taking $n, 2n, \ldots, kn$   bits off $Z$ takes us into $\+ P$.

 We analyse how weaker and weaker  effectiveness conditions on $\+ P$ ensure  multiple recurrence when starting from a  sequence $Z$ that satisfies a stronger and stronger  randomness property for an algorithmic test notion. We begin with the strongest effectiveness condition, being clopen; in this case it is easily seen that   weak (or Kurtz)   randomness of $Z$  suffices. As  most general effectiveness condition  we will  consider being effectively closed (i.e.\ $\PI 1$); \ML-randomness turns out to be the appropriate notion. The proof will import some method from the  case of a clopen $\+ P$.  Note that $\PI 1$ subsets of Cantor space are often called \emph{$\PI 1$ classes}. For background on randomness notions see \cite[Ch.\ 3]{Nies:book} or \cite[Ch ?]{Downey.Hirschfeldt:book}.

\subsection{Multiple recurrence for weakly  random sequences} 
Recall that $Z$ is weakly  (or Kurtz) random if $Z$ is in no null $\PI 1$ class. 
\begin{prop} \label{prop: Kurtz} Let $\+ P \sub \cantor$  be a non-empty  clopen set. Each weakly   random bit sequence $Z$ is multiply recurrent  in $\+ P$.\end{prop}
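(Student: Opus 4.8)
The plan is to exploit the fact that a clopen set $\+ P$ is a finite union of basic cylinders, so membership in $\+ P$ depends only on finitely many bits of the sequence. First I would fix a length $\ell$ such that $\+ P = \Opcl S$ for a set $S$ of strings all of length $\ell$ (take $\ell$ to be the maximum length appearing in a finite clopen presentation and close downward). Then for any $Z$ and any $n$, whether $Z \in T^{-ni}(\+ P)$ depends only on the bits $Z(ni), Z(ni+1), \dots, Z(ni+\ell-1)$. So the condition $(\diamond)$, that $Z \in \bigcap_{1\le i\le k} T^{-ni}(\+ P)$, is a condition on finitely many bits of $Z$, namely those in positions $< kn + \ell$.

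Next I would set up the appropriate $\PI 1$ class whose nullity would be contradicted. Fix $k \ge 1$ and suppose toward a contradiction that some weakly random $Z$ is \emph{not} $k$-recurrent in $\+ P$, i.e.\ for every $n \ge 1$ we have $Z \notin \bigcap_{1 \le i \le k} T^{-ni}(\+ P)$. Consider the class $\+ Q$ of all $X \in \cantor$ that fail to be $k$-recurrent in $\+ P$; this is $\bigcap_{n \ge 1}\bigl(\cantor - \bigcap_{1 \le i \le k} T^{-ni}(\+ P)\bigr)$, an intersection of clopen sets, hence a $\PI 1$ class (indeed decidable), and it contains $Z$. It therefore suffices to show $\leb \+ Q = 0$; weak randomness of $Z$ then gives the contradiction, and since $k$ was arbitrary, $Z$ is multiply recurrent.

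To see $\leb \+ Q = 0$ I would use independence across well-separated blocks. Let $p = \leb \+ P > 0$ (positive since $\+ P$ is nonempty clopen). Pick any $n$ with $kn \ge \ell$, so that the blocks of coordinates $[n, n+\ell), [2n, 2n+\ell), \dots, [kn, kn+\ell)$ are pairwise disjoint. Under the product measure $\leb$, the events $X \in T^{-ni}(\+ P)$ for $i = 1, \dots, k$ are then mutually independent, each of probability $p$, so $\leb\bigl(\bigcap_{i} T^{-ni}(\+ P)\bigr) = p^k > 0$. Now I would iterate this along a sparse sequence: choose $n_1 < n_2 < \cdots$ growing fast enough that the whole block $[n_{j+1}, k n_{j+1} + \ell)$ lies strictly above $k n_j + \ell$; then the events $E_j = \{X : X \in \bigcap_{1 \le i \le k} T^{-n_j i}(\+ P)\}$ depend on pairwise disjoint sets of coordinates, hence are mutually independent, each of probability $p^k$. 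Since $\+ Q \subseteq \bigcap_j (\cantor - E_j)$ and $\prod_j (1 - p^k) = 0$, we get $\leb \+ Q = 0$.

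The only mild subtlety, and the step I would be most careful about, is the reduction to strings of a single fixed length $\ell$ together with the block-disjointness bookkeeping: one must verify that $\+ P$ clopen genuinely yields such an $\ell$ (clopen subsets of Cantor space are exactly finite unions of cylinders, and one can pad to equal length), and then check the arithmetic $k n_j + \ell < n_{j+1}$ can be arranged recursively — both are routine but are where an off-by-one slip would hide. Everything else is a direct application of independence under the product measure plus the definition of weak randomness via null $\PI 1$ classes.
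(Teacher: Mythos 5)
Your proposal is correct and follows essentially the same route as the paper: reduce the clopen set to cylinders of one fixed length $\ell$, pass to a sparse sequence of values of $n$ so that the recurrence events become mutually independent under the product measure, and conclude that the resulting null $\Pi^0_1$ class of non-$k$-recurrent points (the paper uses $n_t = n_0(k+1)^t$ and intersects only over those $n_t$) witnesses that $Z$ is not weakly random. The one slip to fix is the intra-block condition: consecutive blocks $[in, in+\ell)$ and $[(i+1)n, (i+1)n+\ell)$ are disjoint exactly when $n \ge \ell$, not when $kn \ge \ell$, a harmless correction since your recursive choice of the sparse sequence can simply also require $n_1 \ge \ell$.
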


\begin{proof}

Suppose    $Z$ is not $k$-recurrent in $\+ P$ for some $k \ge 1$. We define a null $\PPI$ class  $\+ Q$ containing $Z$. Let $n_0$ be least such that $\+ P = \Opcl F$ for some set of strings of length $n_0$.  Let $n_t= n_0(k+1)^t$ for $t \ge 1$. Let
\[ \+ Q = \bigcap_{t \in \NN}  \{ Y \colon   \bigvee_{1 \le i \le k} Y_{i n_t} \not \in \+ P\}. \]
 By definition  of $n_0$  the conditions in  the same disjunction are independent, so  we have \[ \leb (\bigvee_{1 \le i \le k} Y_{i n_t} \not \in \+ P ) =  1 - (\leb \+ P) ^k < 1.\] By definition of the $n_t$ for $t >0$, the class $\+ Q$ is the independent intersection of  such classes  indexed by~$t$. Therefore $\+ Q$ is null. Clearly $\+ Q$ is $\PPI$.
 
   By hypothesis $Z \in \+ Q$. So $Z$ is not weakly random.
\end{proof} 

\subsection{Multiple recurrence for     Schnorr random sequences} 
 
\begin{thm} Let $\+ P \sub \cantor$  be a $\PPI$ class such that   $0< p = \leb  \+ P$ and $p$ is a computable real. Each Schnorr   random   $Z$ is multiply recurrent  in $\+ P$.
\end{thm}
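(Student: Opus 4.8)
The idea is to mimic the clopen case (Proposition~\ref{prop: Kurtz}) but replace the null $\PPI$ class by a Schnorr test. Since $\+ P$ is $\PPI$ with computable measure $p>0$, we can write $\+ P = \bigcap_s \+ P_s$ where $\+ P_s = \Opcl{F_s}$ is a decreasing sequence of clopen sets with $F_s$ a set of strings of length $m_s$, and since $\leb \+ P = p$ is computable we may, by speeding up the approximation, assume $\leb \+ P_s \le p + 2^{-s}$ and also arrange a computable rate at which $\leb(\+ P_s \setminus \+ P) \to 0$.

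\textbf{Main construction.} Suppose towards a contradiction that $Z$ is Schnorr random but not $k$-recurrent in $\+ P$ for some fixed $k \ge 1$; that is, for every $n \ge 1$ there is some $i$ with $1 \le i \le k$ such that $Z_{ni} \notin \+ P$. We want to build a Schnorr test capturing $Z$. Working with the clopen approximations, for a parameter $t$ consider the clopen ``failure'' event
\[ \+ B_t = \{ Y : \bigvee_{1 \le i \le k} Y_{i n_t} \notin \+ P_{s(t)} \}, \]
where $n_t$ and $s(t)$ are to be chosen. Using independence of the disjuncts over a suitable length scale as in Proposition~\ref{prop: Kurtz}, together with $\leb \+ P_{s(t)} \ge p$, each single block event has measure at most $1 - p^k + O(k 2^{-s(t)}) < 1 - p^k/2$ once $s(t)$ is large. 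Choosing the scales $n_t$ to grow fast enough that the blocks $[i n_t, i n_t + m_{s(t)})$ for successive $t$ are pairwise disjoint, the events $\+ B_t$ become independent, so $\leb \bigcap_{t < T} \+ B_t \le (1 - p^k/2)^T$, which is computably small. The Schnorr test is $\+ U_\ell = \bigcap_{t < T(\ell)} \+ B_t$ for $T(\ell)$ chosen so that $(1-p^k/2)^{T(\ell)} < 2^{-\ell}$; each $\+ U_\ell$ is clopen uniformly in $\ell$, and its measure is a computable real (indeed computable uniformly in $\ell$), as required for a Schnorr test. Finally one checks $Z \in \bigcap_\ell \+ U_\ell$: given $t$, non-$k$-recurrence supplies an $i$ with $Z_{i n_t} \notin \+ P \supseteq \+ P$... more precisely $Z_{in_t}\notin\+P$, and since $\+P\subseteq\+P_{s(t)}$ this does \emph{not} immediately give $Z_{in_t}\notin\+P_{s(t)}$ — this is the gap to be closed below.

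\textbf{The main obstacle.} The difficulty is precisely this last point: non-recurrence gives $Z_{in_t} \notin \+ P$, but the test is phrased with the \emph{larger} clopen set $\+ P_{s(t)}$, and a point outside $\+ P$ may still lie in $\+ P_{s(t)}$. To fix this one must run the argument in the other direction: instead of a failure event, use a \emph{recurrence} event with the clopen over-approximation and exploit computability of $p$ to bound the error. Concretely, define the test using $n_t$ chosen so that $Z \upto i n_t + m_{s(t)}$ already decides membership in $\+ P_{s(t)}$ for all $i \le k$, and observe that if $Z$ were $k$-recurrent we would have $Z \in \bigcap_i T^{-n_t i} \+ P \subseteq \bigcap_i T^{-n_t i}\+ P_{s(t)}$; the contrapositive, combined with a counting argument over the finitely many candidate values of $n$ in a controlled range (which is where Schnorr randomness, as opposed to Kurtz randomness, buys us the uniform measure bound via the method imported from the clopen case), forces $Z$ into a null set that is a genuine Schnorr test. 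The delicate bookkeeping is matching the convergence rate of $\leb(\+ P_s \setminus \+ P)$ to the growth of $n_t$ and to the target $2^{-\ell}$ so that all measures involved are computable uniformly; everything else is a routine adaptation of Proposition~\ref{prop: Kurtz}.
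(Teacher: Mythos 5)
Your skeleton (clopen over-approximations of $\+ P$, independence across well-separated scales $n_t$, a product bound $(1-p^k)^T$, computable measures) matches the intended strategy, but there is a genuine gap, which you yourself flag and then do not close. Non-recurrence of $Z$ only gives, for each $n$, some $i\le k$ with $Z_{in}\notin \+ P$; your test events are phrased with the larger clopen sets $\+ P_{s(t)}\supseteq \+ P$, so this never puts $Z$ into $\+ B_t=\{Y:\bigvee_i Y_{in_t}\notin\+ P_{s(t)}\}$. The proposed repair does not work: the implication ``$Z$ $k$-recurrent at $n_t$ $\Rightarrow$ $Z\in\bigcap_i T^{-n_t i}\+ P_{s(t)}$'' has as contrapositive only ``$Z\notin\bigcap_i T^{-n_t i}\+ P_{s(t)}$ $\Rightarrow$ not recurrent at $n_t$'', which is the wrong direction for capturing a non-recurrent $Z$; a non-recurrent point can perfectly well lie in every clopen recurrence event $\bigcap_i T^{-ni}\+ P_{s}$, because membership in $\+ B=\cantor-\+ P$ is only $\SI 1$ and the stage at which it is witnessed is not computably bounded. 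The phrase ``counting argument over the finitely many candidate values of $n$'' asserts the conclusion rather than supplying the mechanism.

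The missing idea is a second, separate test covering the \emph{boundary} events. Writing $\+ B=\bigcup_s\+ B_s$ with $\+ B_s$ clopen (so $\cantor-\+ B_s$ is your $\+ P_s$), computability of $p$ lets one choose $n_t\ge (k+1)n_{t-1}$ with $\leb(\+ B-\+ B_{n_t})\le \tp{-t-v-k}$. Then one uses two objects for each parameter $v$: the class $\+ Q_v=\{Y:\fa t\ \bigvee_{i\le k} Y_{in_t}\in\+ B_{n_t}\}$, which is a null $\PPI$ class (your independence computation) and is avoided by $Z$ simply because Schnorr randomness implies Kurtz randomness; and the ``error'' class $\+ G_v=\bigcup_t\{Y:\bigvee_{i\le k} Y_{in_t}\in \+ B-\+ B_{n_t}\}$, which is uniformly $\SI 1$ with uniformly computable measure at most $\tp{-v}$, hence a Schnorr test. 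If $Z$ avoids $\+ G_v$ for some $v$, then along $Z$'s relevant tails membership in $\+ B_{n_t}$ coincides with membership in $\+ B$, which is exactly what converts $Z\notin\+ Q_v$ into genuine $k$-recurrence in $\+ P$: there is $t$ with $Z_{in_t}\notin\+ B_{n_t}$ and $Z_{in_t}\notin\+ B-\+ B_{n_t}$ for all $i\le k$, i.e.\ $Z_{in_t}\in\+ P$ for all $i\le k$. Your observation about matching convergence rates is used only here, to make $\leb\+ G_v$ small and computable; without isolating this boundary test, the transfer from ``$\notin\+ P$'' to ``$\notin$ clopen approximation'' cannot be made, and the argument does not go through.
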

We note that   this  also follows from a particular effective version of  Furstenberg multiple recurrence (Cor.\ \ref{prop:FurMRT3}), as  explained in Remark~\ref{Rem:Rute} below. However, we prefer to give a direct proof avoiding Cor.\ \ref{prop:FurMRT3}.
\begin{proof} We extend the previous proof, working with an effective approximation $\+ B= \cantor - \+ P = \bigcup_s  \+ B_s$ where the $\+ B_s$ are clopen. We may assume that $\cantor -  \+ B_s = \Opcl {B_s}$ for some effectively given set $B_s$ of strings of length $s$. 

We fix an arbitrary $k \ge 1$ and show that $Z$ is    $k$-recurrent in $\+ P$. 
 Given $v \in \NN$ we will define a null $\PPI$ class $\+ Q_v \sub \cantor$ which plays a role similar to the class $\+ Q$  before. We also define an  ``error class'' $\+ G_v \sub \cantor$ that is $\SI 1$ uniformly in $v$. Further, $\leb \+ G_v$ is computable uniformly in $v$ and $\leb \+ G_v \le \tp {-v}$, so that $\seq { \+ G_v }\sN v$ is a Schnorr test.  If $Z$ passes this Schnorr test then $Z$ behaves essentially like a weakly random in the  proof of  Proposition~\ref{prop: Kurtz}, which shows that $Z$ is $k$-recurrent for  $\+ P$. 
 
For the details, given  $v \in \NN$, we define a computable sequence $\seq {n_t}$.  Let $n_0= 1$.
   Let $n= n_t \ge (k+1)n_{t-1}$ be so large that \bc $\leb (\+ B - \+ B_n) \le \tp{-t- v-k}$. \ec
   
As in the proof of Proposition~\ref{prop: Kurtz}, the  class
\[ \+ Q_v = \{ Y \colon \fa t  \bigvee_{1 \le i \le k} Y_{i n_t} \in \+ B_{n_t}\} \]
is  $\PPI$ and null. The ``error class" for  $v$ at stage $t$ is
\[ \+ G_{v}^t = \{ Y \colon \bigvee_{1 \le i \le k} Y_{i n_t} \in  \+ B - \+ B_{n_t}\}. \]
Notice  that $\leb \+ G^t_v \le k \tp{-t- v-k}$, and this measure is computable uniformly in $v,t$. Let $\+ G_v = \bigcup_t G^t_v$. Then $\leb \+ G_v$ is also uniformly computable in $v$, and bounded above by $\tp {-v}$, as required. 

If  $Z$ is Schnorr random, there is $v$ such that $Z \not \in \+ G_v$. Also, $Z \not \in \+ Q_v$, so that for some $t$ we have $Z_{i n_t} \in \+ P$ for each $i$ with  $1 \le i \le k$, as required.
\end{proof}

\subsection{Multiple recurrence for ML-random sequences}
For  general  $\PPI$ classes, the right level of randomness to obtain multiple recurrence is ML-randomness. We first remind the reader that  even the case of $1 $-recurrence  characterizes ML-randomness. This is a well-known result of \Kuc\  \cite{Kucera:85}.

\begin{prop} $Z$ is ML-random $\LR$ 
	 $Z$ is $1$-recurrent in  each  $\PPI$ class $\+ P$ with  $ 0<  p= \leb  \+ P$.  \end{prop}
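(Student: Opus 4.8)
The plan is to prove the two directions of the equivalence separately, both following \Kuc's classical argument.

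For the direction ($\LA$), I would argue contrapositively: suppose $Z$ is not ML-random. The natural thing is to exhibit a single $\PPI$ class $\+ P$ of positive measure witnessing that $Z$ fails to be $1$-recurrent in $\+ P$, i.e., such that $Z_n \notin \+ P$ for every $n \ge 1$. Since $Z$ is not ML-random, $Z$ lies in some member $\+ U_c$ of the universal ML-test with $\leb \+ U_c < 2^{-c}$; pick $c$ large (say $c \ge 2$). The idea is to let $\+ P$ be the complement of the ``tail-closure'' of $\+ U_c$, namely $\+ P = \cantor \setminus \bigcup_{n \ge 0} T^{-n}(\+ U_c)$. This $\+ P$ is a $\PI 1$ class because $\+ U_c$ is $\SI 1$ and the $T^{-n}$-preimage of a $\SI 1$ class is again $\SI 1$ uniformly; it has positive measure because $\leb \bigcup_n T^{-n}(\+ U_c) \le \sum_n \leb T^{-n}(\+ U_c) = \sum_n \leb \+ U_c$, and by choosing $\+ U_c$ with $\leb \+ U_c$ small enough — actually one should instead take the $n$-th term to have measure $\le 2^{-c-n}$, which is available by passing to $\+ U_{c+n}$ and shifting indices — the union has measure $< 1$. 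And $Z$ is not $1$-recurrent in $\+ P$: for every $n \ge 1$, $Z \in \+ U_c \subseteq T^{-0}(\+ U_c)$ already, but more to the point we need $Z_n \notin \+ P$ for all $n$, which holds since $Z \in T^{-0}(\+ U_c)$ forces $Z \notin \+ P$; to get $Z_n \notin \+ P$ I use that $Z_n \in T^{-m}(\+ U_c)$ would follow from $Z \in T^{-(n+m)}(\+U_c)$, and indeed $Z \in \+ U_c = T^{-0}(\+ U_c)$ already puts $Z$ outside $\+P$, so by the shift-invariant form of the construction $Z_n$ is outside $\+ P$ too. I would phrase the construction so that this bookkeeping is clean.

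For the direction ($\RA$), suppose $Z$ is ML-random and let $\+ P$ be a $\PPI$ class with $0 < p = \leb \+ P$. I want $n \ge 1$ with $Z_n \in \+ P$, i.e., $Z \in T^{-n}(\+ P)$. Suppose not, so $Z \in \bigcap_{n \ge 1} T^{-n}(\cantor \setminus \+ P) = \bigcap_{n \ge 1} T^{-n}(\+ B)$ where $\+ B = \cantor \setminus \+ P$ is $\SI 1$ with $\leb \+ B = 1 - p < 1$. The key estimate is that, by mixing (or even just by the fact that a $\PI 1$ class of measure $p$ contains a clopen approximation: write $\+ P = \bigcap_s \+ P_s$ with $\+ P_s$ clopen, $\leb \+ P_s \to p$), one can choose a rapidly increasing computable sequence $n_0 < n_1 < \cdots$ so that the events ``$Z_{n_t} \in \+ B$'' become effectively independent-like: $\leb\{Y : Y_{n_t} \in \+ B_{n_{t+1}}\text{ for all } t < s\}$ decays geometrically. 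This produces a ML-test (in fact the same kind of null $\PI 1$ class as in Proposition~\ref{prop: Kurtz}, together with a small $\SI 1$ error class as in the Schnorr-random proof to absorb the gap $\+ B - \+ B_{n_t}$) capturing $Z$, contradicting ML-randomness of $Z$. Concretely I would reuse the machinery of the preceding theorem's proof with $k = 1$, noting that there the positivity/computability of $p$ was only used to make the error class a \emph{Schnorr} test; for a ML-test we do not need $p$ computable, only that $\leb(\+B - \+B_n) \to 0$, which is automatic.

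The main obstacle I anticipate is getting the index bookkeeping in the ($\LA$) direction exactly right — i.e., packaging the shifts $T^{-n}(\+ U_c)$ so that the resulting $\+ P$ is simultaneously (a) genuinely $\PI 1$ with a single index, (b) of measure bounded away from $1$, and (c) such that $Z$ avoids \emph{every} tail-translate of $\+ P$ and not merely $\+ P$ itself. The cleanest route is probably \Kuc's original trick: instead of a full complement, build $\+ P$ directly as a $\PI 1$ class inside which every ML-random has a tail, by a measure-theoretic pigeonhole over the clopen pieces of a universal test; I would cite \cite{Kucera:85} for the precise form and then just indicate the adaptation. Everything else is routine given Proposition~\ref{prop: Kurtz} and the Schnorr-random theorem already in hand.
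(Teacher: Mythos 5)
Your direction ($\LA$) has a genuine gap, and it sits exactly where you yourself flag the ``index bookkeeping''. With $\+ P = \cantor \setminus \+ W$, where $\+ W = \bigcup_{m\ge 0} T^{-m}(\+ U)$ (or your weighted variant), what you must show is $Z_n \notin \+ P$ for every $n \ge 1$, i.e.\ that some tail $Z_j$ with $j \ge n$ lies in a test component. But all you establish is $Z = Z_0 \in \+ U$, and your appeal to ``the shift-invariant form of the construction'' goes the wrong way: the tail-closure satisfies $T^{-1}(\+ W) \subseteq \+ W$ (membership of $Z_n$ in $\+ W$ pulls back to $Z$), whereas you would need $\+ W \subseteq T^{-n}(\+ W)$ to push membership forward from $Z$ to $Z_n$, and that inclusion fails. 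The missing idea is the standard fact that ML-randomness is preserved by adding finitely many bits at the beginning, so that if $Z$ is not ML-random then \emph{no} tail $Z_n$ is ML-random, hence every tail fails the universal test. Once you have that, no tail-closure is needed at all: take any $\PPI$ class of positive measure consisting entirely of ML-randoms --- the paper uses the Levin--Schnorr class $\+ P = \{Y \colon \fa n\, K(Y\uhr n) \ge n-1\}$, which has $\leb \+ P \ge 1/2$ (the complement of one level of a universal test would do equally well) --- and observe that if some tail of $Z$ were in $\+ P$, that tail would be ML-random and so would $Z$. That is the entire proof of ($\LA$). Your closing suggestion to instead build ``a $\PI 1$ class inside which every ML-random has a tail'' and cite \cite{Kucera:85} aims at the wrong direction: by \Kuc's theorem \emph{every} positive-measure $\PPI$ class has that property, so it cannot certify failure of recurrence for non-randoms.

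For ($\RA$) the intended content is just \Kuc's theorem, which the paper simply cites (\cite[3.2.24]{Nies:book}). Your preferred route --- reusing the Schnorr-randomness machinery and claiming that ``we do not need $p$ computable, only that $\leb(\+ B - \+ B_n) \to 0$, which is automatic'' --- does not work as stated: to turn the error classes into a ML-test you must \emph{computably} choose the $n_t$ with a prescribed bound on $\leb(\+ B - \+ B_{n_t})$, and since $\leb \+ B$ is only left-c.e.\ this requires exactly the computability of $p$ you are trying to drop (without computable $n_t$ the classes $\+ G_v$ are not uniformly $\SI 1$ with certified measure bounds). The correct argument for arbitrary $p>0$ is \Kuc's $C^r$-style construction, which in this paper reappears as the first half of the proof of Theorem~\ref{thm:ML}; for $k=1$ the side condition $1-1/k<p$ there is vacuous, so that argument applies verbatim and is the fallback you should use.
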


\begin{proof} $\RA$:  see e.g.\ \cite[3.2.24]{Nies:book} or \cite[where?]{Downey.Hirschfeldt:book}.
	
	\n $\LA$: ML-randomness of a sequence $Z$  is preserved by   adding bits at the beginning. By the Levin-Schnorr Theorem, the $\PPI$ class $\+ P = \{ Y \colon \fa n  K(Y \uhr n) \ge n-1\}$ consists entirely of ML-randoms. So, if $Z$ is not ML-random, then no tail of $Z$ is in the $\PPI$ class $\+ P$. Further, $\leb \+ P \ge 1/2$.
\end{proof}

\begin{thm} \label{thm:ML} Let $\+ P \sub \cantor$  be a $\PPI$ class with  $ 0<  p= \leb  \+ P$. Each \ML\    random   $Z$ is multiply recurrent  in $\+ P$.
\end{thm}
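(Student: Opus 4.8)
The plan is to mimic the Schnorr-random argument but replace the ``error class'' $\+ G_v$ by a genuine \ML-test. The obstacle in the previous theorem was that without computability of $p=\leb\+ P$ we cannot control the speed of the approximation $\+ B_s$ to $\+ B=\cantor-\+ P$ in a way that makes the measures of the error sets uniformly computable; but for \ML-randomness we only need a \emph{uniformly $\SI 1$ sequence of classes with measures summably small}, not computability of these measures. So first I would fix $k\ge 1$, assume toward a contradiction that $Z$ is not $k$-recurrent in $\+ P$, and set up, for each $v\in\NN$, the null $\PPI$ class
\[ \+ Q_v = \{ Y \colon \fa t \bigvee_{1\le i\le k} Y_{in_t}\in \+ B_{n_t}\} \]
together with the error class $\+ G_v=\bigcup_t \+ G_v^t$ where $\+ G_v^t=\{Y\colon \bigvee_{1\le i\le k} Y_{in_t}\in\+ B-\+ B_{n_t}\}$, exactly as before. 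The sequence $\seq{n_t}$ is chosen recursively with $n_t\ge(k+1)n_{t-1}$ large enough that $\leb(\+ B-\+ B_{n_t})\le 2^{-t-v-k}$; here we \emph{can} still find such $n_t$ effectively because $\leb(\+ B-\+ B_n)\to 0$ and $\leb \+ B_n$ is a nondecreasing computable sequence of rationals, so we can search for a stage $n$ with $\leb\+ B_n\ge \leb\+ B_m-2^{-t-v-k}$ for the current $m$ — we do not need to know $\leb\+ B$ itself, only that the increments become small. Then $\leb\+ G_v\le 2^{-v}$ and $\+ G_v$ is $\SI 1$ uniformly in $v$, so $\seq{\+ G_v}\sN v$ is a \ML-test.

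Next, since $Z$ is \ML-random there is $v$ with $Z\notin\+ G_v$. The key remaining point is that $Z\notin\+ Q_v$: this is where \ML-randomness of tails, rather than weak randomness, must do the work, because $\+ Q_v$ need not be null relative merely to Kurtz randomness unless we argue that $\+ Q_v$ is itself (contained in) a \ML-test component. Actually $\+ Q_v$ is $\PPI$ and null, but a single null $\PPI$ class is defeated by any weakly random, so it suffices to observe $Z$ is weakly random; however to get the \emph{uniform} test we should rather note, following the $1$-recurrence proof, that the independent-intersection structure gives $\leb \+ Q_v \le (1-p^k)^{t}$-type bounds at each finite stage, so truncating $\+ Q_v$ at stage $t$ yields a \ML-test (over $v$, or even a fixed single null $\PI 1$ class suffices). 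The cleanest route is: $\+ Q_v$ is a null $\PI 1$ class, \ML-randoms are weakly random, hence $Z\notin\+ Q_v$. Combining $Z\notin\+ G_v$ with $Z\notin\+ Q_v$ gives, for some $t$, that $Y_{in_t}\in\+ P$ for all $1\le i\le k$ with $Y=Z$, i.e.\ $Z$ is $k$-recurrent in $\+ P$ with $n=n_t$ — contradiction.

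\textbf{Main obstacle.} The delicate step is the effective choice of the $n_t$: we must produce a computable sequence $\seq{n_t}$ such that $\leb(\+ B-\+ B_{n_t})\le 2^{-t-v-k}$ \emph{without} assuming $\leb\+ P$ computable. The resolution is that $\seq{\leb\+ B_n}\sN n$ is a computable increasing sequence of rationals converging to $\leb\+ B$, so given any threshold $\varepsilon$ we can effectively find $n$ with $\leb\+ B_n - \leb\+ B_m<\varepsilon$ for all $m\le$ the current index by waiting until the increments stabilize below $\varepsilon$; this bounds the \emph{future} increase $\leb(\+ B-\+ B_n)$ only if the sequence is Cauchy in an effective sense, which it is not in general — so instead we must argue more carefully. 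In fact the correct move is to not require $\leb(\+ B-\+ B_{n_t})$ bounded but to redefine $\+ G_v^t$ using a modulus: take $\+ G_v^t=\{Y\colon \bigvee_i Y_{in_t}\in\+ B_{n_{t+1}}-\+ B_{n_t}\}$ so that $\bigcup_t \+ G_v^t \supseteq \{Y\colon \text{some }Y_{in_t}\text{ enters }\+ B\text{ after stage }n_t\}$, and the measures telescope: $\sum_t \leb\+ G_v^t \le k\sum_t \leb(\+ B_{n_{t+1}}-\+ B_{n_t})\le k\leb\+ B\le k$, which is finite but not $\le 2^{-v}$. To fix the normalization one works with $v$ built into $n_0$ and splits $\+ B$ into pieces of measure $\le 2^{-v-k}$, which is possible since the $\+ B_s$ are clopen with computable (rational) measures. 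I expect this bookkeeping — getting a genuine \ML-test out of the uncontrolled approximation speed — to be the main technical content; once it is in place, the recurrence conclusion follows verbatim from the Schnorr case.
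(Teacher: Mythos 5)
You have correctly located the obstacle, but your proposed repair does not close it, so there is a genuine gap. The windowed error classes $\+ G_v^t=\{Y\colon \bigvee_{1\le i\le k} Y_{in_t}\in\+ B_{n_{t+1}}-\+ B_{n_t}\}$ do not satisfy the inclusion you assert: if the tail $Z_{in_t}$ enters $\+ B$ only during a later window $[n_{t'},n_{t'+1})$ with $t'>t$, then $Z$ need not belong to any component --- not to $\+ G_v^t$ (wrong window) and not to $\+ G_v^{t'}$, since that class constrains the tails $Y_{in_{t'}}$ rather than $Y_{in_t}$. Hence from $Z\notin\+ Q_v$ together with $Z$ passing the windowed test you cannot conclude $Z_{in_t}\in\+ P$ for all $i\le k$; the error class that actually covers the failure is the original $\{Y\colon \bigvee_i Y_{in_t}\in\+ B-\+ B_{n_t}\}$, and bounding its measure by $2^{-t-v-k}$ requires a modulus of convergence for $\leb\+ B_n\to\leb\+ B$, i.e.\ computability of $p$ --- which is exactly what is unavailable here. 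The normalization remark does not help: one can indeed cut the enumeration of $\+ B$ effectively into c.e.\ slices of measure $\le 2^{-v-k}$, but the failure event at a witness $t$ concerns \emph{future} enumeration above the already committed positions $in_t$, and its probability cannot be charged in advance to any fixed small component; even reading your classes as a Solovay test (total measure $\le k$ suffices for that) only gives that $Z$ meets finitely many components, which, because of the failed inclusion above, still does not exclude late-enumeration failures at every witness produced by the null $\PPI$ classes.

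The paper's proof therefore takes a different route, which you may want to compare: it iterates \Kuc's recurrence argument rather than making one global error estimate. Fix a prefix-free c.e.\ set $B$ with $\Opcl B=\cantor-\+ P$ and build uniformly c.e.\ prefix-free sets $C^r$: above each $\sigma\in C^{r-1}$, enumerated at stage $s=|\sigma|$, one enumerates the prefix-minimal $\eta\succ\sigma$ of length $t>(k+1)s$ with $(\eta)_{si}\in B_{t-si}$ for some $1\le i\le k$, i.e.\ the strings witnessing that recurrence with gap $s$ fails along $\eta$. The local measure of such $\eta$ above each $\sigma$ is at most $q=k\,\leb\Opcl B$, so $\leb\Opcl{C^r}\le q^r$: the measure control is local and multiplicative, and no computability of $p$ is used, only $q<1$. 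A non-$k$-recurrent sequence lies in every $\Opcl{C^r}$, hence is not ML-random. When $\leb\Opcl B\ge 1/k$ one removes a finite $D\sub B$ with $\leb\Opcl{B-D}<1/k$, treats the clopen part $\Opcl D$ by the independence argument from the weakly random case (the classes $G_m$), and runs the analogous test $\wt C^u$ built from $\wt B=B-D$ above a suitable $\rho\prec Z$. In effect, each spoiling of a witness is charged locally to the string at which that witness was created and becomes the seed of the next level --- exactly the bookkeeping that a single-level error class cannot perform.
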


\begin{proof}
As before we fix an arbitrary $k \ge 1$ in order to  show that $Z$ is    $k$-recurrent in $\+ P$.  First we prove the assertion under the additional assumption  that $1-1/k <  p$.  This  generalises \Kuc's argument in  `$\RA$' of  the proposition above,  where $k=1$ and the additional assumption $0<p$ is already satisfied.

Let $B \sub \strcantor$ be a prefix-free c.e.\ set such that $\Opcl B = \cantor - \+ P$. We may assume that $B_0 = \ES$ and for each $t>0$, if $\sss \in B_t - B_{t-1}$ then $\sssl = t$. We define a uniformly c.e.\ sequence $\seq{C^r}$ of prefix-free sets with the same property that at stage $t$ only strings of length $t$ are enumerated.
 
 For a string $\eta$ and $u \le |\eta|$, we write $(\eta)_u$ for  the string $\eta$ with the first $u$ bits removed.
 Let  $C^0$ only contain the empty string,  which is enumerated at stage $0$. Suppose  $r > 0$ and $C^{r-1}$  has been defined. Suppose $\sss$ is enumerated in $C^{r-1}$ at stage $s$ (so $\sssl =s$). For strings $\eta \succ \sss$ we search for the failure  of    $k$-recurrence in $\+ P$ that would be obtained by  taking  $s$ bits off $\eta$  for $k$ times.
At stage $t>   (k+1) s$,  for each string  $\eta $ of   length $t$  such that   $\eta \succ \sss$ and 
\[ \tag{$*$} \bigvee_{1 \le i \le k}  (\eta)_{si} \in B_{t-si}, \]
and no prefix of $\eta$ is in $C^r_{t-1}$,   put $\eta$  into $C^r$ at stage $t$.   
\begin{claim} $C^r$ is prefix-free for each $r$. \end{claim}
This holds for $r=0$. For $r> 0$ suppose that $\eta \preceq \eta'$ and both strings are in $C^r$. Let $t= |\eta|$. By inductive hypothesis the string $\eta$ was enumerated into $C^r$ via a unique  $\sss \prec \eta$, where $\sss \in C^{r-1}$. Then $\eta = \eta'$ because we chose the string in $C^{r}$  minimal under the prefix relation. This establishes the claim.
 
 By hypothesis $1> q = k \leb \Opcl B$. 
\begin{claim} \label{cl:small measure} For each $r \ge 0$ we have $\leb \Opcl{C^r}  \le q^r$. \end{claim}
This holds for $r=0$. Suppose  now that $r>0$. Let  $\sigma \in C^{r-1}$. The local measure above $\sss$ of strings $\eta $,  of a length $t$,  such that $\bigvee_{1 \le i \le k}  \eta_{si} \in B_{t-is}$ is at most $q$. The estimate follows by the prefix-freeness of $C^r$.

If   $Z$ is not $k$-recurrent  in $\+ P$, then $Z \in \Opcl{C^r}$ for each $r$, so $Z$ is not ML-random. 

We now remove the additional assumption that $1-1/k <  p$.  We define the sets $C^r$ as before.  Note that any string in $C^r$ has length at least $r$. 
  Everything will work except for Claim~\ref{cl:small measure}: if $\leb  \Opcl B\ge 1/k$ then $\leb \Opcl{C^r} $ could   be $1$. To remedy this,  we choose a finite set  $D \sub  B$ such that the set $\wt B = B- D$ satisfies $\leb \Opcl {\wt B} < 1/k$.  
Let $N = \max \{\sssl \colon \sss \in D\}$. We modify the argument of Prop.\ \ref{prop: Kurtz}, where  the clopen set  $\+ P$ there  now becomes  $\cantor - \Opcl D$.  

Let $C = \bigcup_r C^r$. Let $G_m$ be the set of prefix-minimal strings $\eta $  such that  $\eta  \in C$,  and there exist $m$ many $s> N$   as follows. \bi \item  $\eta \uhr s \in C$, and \item for some $i$ with  $1 \le i \le k$, $\eta \uhr{ [ si, s(i+1))}$ extends a string in $D$.  \ei 
(Informally speaking, if there are arbitrarily long such  strings along $Z$, then   the attempted test $\Opcl{C^r}$ might  not work, because the relevant ``block"  $\eta \uhr{ [ si, s(i+1))}$ may  extend a string  in $D$,  rather than one  in $\wt B$.)

The sets $G_m $ are uniformly $\SI 1$. By  choice of $N$ and independence, as in the proof of Prop.\ \ref{prop: Kurtz} we have $\leb \Opcl{G_{m+1}} \le (1- v^k) \leb G_m$, where $v = \leb (\cantor - \Opcl D)$. If  $Z$ is ML-random we can choose a least $m^*$ be such that $Z \not \in \Opcl{G_{m^*}}$.  

Note that $m^*>0$ since $G_0 = \{\ES\}$. So choose $\rho \prec Z$ such that $\rho \in G_{m^*-1}$. Then 
 $\rho\in C^r$ for some  $r$, and  no $\tau$ with  $\rho \preceq \tau \prec Z$ is in $G_{m^*}$. 
 
 We define a ML-test that succeeds on  $Z$. Let $\wt C^r = C^r$. Suppose  $u >  r$ and $\wt C^{u-1}$ has been defined. For each  $\sss \in \wt C^{u-1}$,     put into   $\wt C^{u} $ all the  strings $\eta \succ \sss$ in $C^{u} $ so that    ($*$) can be strengthened to  $ \bigvee_{1 \le i \le k}  (\eta)_{si} \in \wt B_{t-is}$, where $s = \sssl$. 
 
 Let $q = k \leb \Opcl {\wt B}$.  Note that  $\leb \Opcl {\wt C^u} \le q^u$ as before. By the choice of $m^*$ we have  $Z \in \bigcap_{u\ge r} \Opcl {\wt C^u}$, so since $q   < 1$,  an appropriate refinement of the sequence of open sets $\seq {\Opcl {\wt C^u}}\sN u$ shows   $Z$ is not ML-random.

\end{proof}

\subsection{Towards the general case}
\subsubsection{Recurrence for $k$   shift operators} 
  The probability space under consideration is  now  $\+ X = \{0,1\}^{\NN^{\normalsize k}}$ with the product measure.
For $1\le i \le k$, the operator $T_i \colon \+ X \to \+ X$ takes one   ``face" of bits off in direction  $i$. That is, for $Z \in \+ X$,  \bc $T_i(Z)(u_1, \ldots, u_k) = Z(u_1, \ldots, u_i+1, \ldots, u_k)$. \ec
$Z$ is \emph{recurrent} in a class $\+ P \sub \+ X$ if $ [Z \in \bigcap_{  i \le k} T^{-n}_i ( \+ P) $ for some $n$.

Algorithmic randomness notions for points in $\+ X$ can be  defined via  the  effective measure preserving isomorphism $\+ X \to \cantor$ given by a computable bijection $\NN^k \to \NN$. Modifying  the methods above,  we show the following. 

\begin{thm} Let $\+ P \sub \+ X$  be a $\PPI$ class with  $ 0< p=\leb  \+ P$.  Let $Z \in \+ X$.

If $Z$ is (a) Kurtz (b) Schnorr (c) ML-random, 
 then $Z$ is recurrent in $\+ P$    

in case (a) $\+ P$ is clopen  (b) $\leb  \+ P$ is computable (c) for any $\+ P$.\end{thm}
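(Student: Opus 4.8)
The plan is to reduce the multi-operator case on $\+ X = \{0,1\}^{\NN^k}$ to the one-operator (shift) case on Cantor space that was handled in Proposition~\ref{prop: Kurtz} and Theorems~\ref{thm:ML} (and the Schnorr version). The three parts (a), (b), (c) mirror the three subsections above, so I would run the same three arguments, the only real change being bookkeeping about the ``blocks'' one removes. In each case I fix $k$ (already fixed by the statement of recurrence here, since ``recurrent'' means all $k$ coordinates are shifted by the same $n$) and look for a single $n$ with $Z \in \bigcap_{i\le k} T_i^{-n}(\+ P)$.

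First, for (a): assume $\+ P = \Opcl F$ is clopen, determined by coordinates in the finite cube $[0,n_0)^k \subseteq \NN^k$. As in Proposition~\ref{prop: Kurtz}, set $n_t = n_0 (k+1)^t$ and let
\[ \+ Q = \bigcap_{t\in\NN} \{ Y \colon \bigvee_{1\le i\le k} T_i^{-n_t}(Y) \notin \+ P \}. \]
The point of the spacing $n_t = n_0(k+1)^t$ is that for fixed $t$ the $k$ events ``$T_i^{-n_t}(Y)\notin\+ P$'' depend on disjoint sub-cubes of $\NN^k$ (the $i$-th one involves coordinates with $i$-th entry in $[n_t, n_t+n_0)$ and the rest in $[0,n_0)$ — here I need $n_0(k+1)^t \ge n_0$, i.e.\ the blocks used by different $i$ don't overlap and don't hit the base cube used at later stages $t'>t$), so the stage-$t$ event has measure $1-p^k<1$, and distinct stages use disjoint coordinate sets, hence $\+ Q$ is null and $\PPI$; if $Z$ is not recurrent then $Z\in\+ Q$, contradicting Kurtz randomness. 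I should double-check the disjointness of the coordinate regions across the different $i$ and across the different $t$ — that is the one place where moving from $\NN$ to $\NN^k$ changes the combinatorics, and it is what forces a slightly larger spacing constant than in the $1$-dimensional case.

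Next, (b) and (c) are obtained by porting the Schnorr-random and ML-random proofs verbatim with $T$ replaced by the tuple $(T_1,\dots,T_k)$ and ``taking $si$ bits off'' replaced by ``applying $T_i^{s}$''. For (b): write $\+ B = \+ X - \+ P = \bigcup_s \+ B_s$ with $\+ B_s$ clopen and, for each $v$, pick a fast-growing computable sequence $n_t$ (with $n_t \ge (k+1)n_{t-1}$ and $\leb(\+ B - \+ B_{n_t}) \le 2^{-t-v-k}$) exactly as in the Schnorr-random theorem; define $\+ Q_v = \{Y : \fa t\, \bigvee_i T_i^{-n_t}(Y)\in\+ B_{n_t}\}$ (null $\PPI$) and the error test $\+ G_v = \bigcup_t \{Y : \bigvee_i T_i^{-n_t}(Y) \in \+ B - \+ B_{n_t}\}$, whose measure is uniformly computable and $\le 2^{-v}$ since computability of $p$ makes $\leb(\+ B-\+ B_{n})$ computable. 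A Schnorr random $Z$ escapes some $\+ G_v$, hence for some $t$ it lands in $\+ P$ under all $T_i^{n_t}$. For (c): build the uniformly c.e.\ prefix-free sets $C^r$ just as in Theorem~\ref{thm:ML} — at stage $t$, for $\eta\succ\sss\in C^{r-1}$ with $|\sss|=s$, enumerate $\eta$ (of length $t > (k+1)s$) if $\bigvee_{1\le i\le k} $ the appropriate ``$T_i^s$-block'' of $\eta$ is in $B$ — so that non-recurrence of $Z$ puts $Z\in\bigcap_r\Opcl{C^r}$. Under the extra hypothesis $1-1/k<p$ the measure bound $\leb\Opcl{C^r}\le q^r$ with $q = k\,\leb\Opcl B < 1$ goes through unchanged and $Z$ fails an ML-test; and the extra hypothesis is removed exactly as in Theorem~\ref{thm:ML}, by splitting off a finite $D\subseteq B$ with $\leb\Opcl{\wt B}<1/k$, absorbing the ``bad blocks that hit $D$'' into an auxiliary uniformly $\SI 1$ sequence $G_m$ (using the clopen argument of (a), now in $\+ X$, for the estimate $\leb\Opcl{G_{m+1}}\le (1-v^k)\leb\Opcl{G_m}$), and then running the refined test $\wt C^u$ above a string $\rho\prec Z$ in $G_{m^*-1}$.

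The main obstacle I anticipate is purely combinatorial rather than conceptual: in $\+ X = \{0,1\}^{\NN^k}$ the operator $T_i^n$ removes a whole ``slab'' of coordinates (all $(u_1,\dots,u_k)$ with $u_i<n$) rather than an initial segment, so I must be careful that, at a given stage, the $k$ slabs removed by $T_1^n,\dots,T_k^n$ and the finitely many coordinates actually inspected by the clopen approximations overlap in a controlled way — specifically I need the independence/disjointness that underlies the measure estimates $1-p^k$, $\leb\Opcl{C^r}\le q^r$, and $\leb\Opcl{G_{m+1}}\le(1-v^k)\leb\Opcl{G_m}$. Choosing the step sizes $n_t$ to grow fast enough (a factor $k+1$ per stage suffices, as above) makes the relevant coordinate regions pairwise disjoint, after which every measure computation is literally the one already done in the preceding subsections. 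Everything else — the effective-measure-preserving identification $\+ X\cong\cantor$ via a computable bijection $\NN^k\to\NN$, which transports Kurtz/Schnorr/ML-randomness and $\PPI$/$\SI 1$ classes and computable measures back and forth — is routine.
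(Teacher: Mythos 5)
Your proposal is correct, and for parts (a) and (c) it is essentially the paper's own argument: the clopen/Kurtz computation with disjoint coordinate regions, and the ML-test construction with the sets $C^r$, the finite set $D\sub B$ with $\leb\Opcl{\wt B}<1/k$, the auxiliary $\SI 1$ sets $G_m$, the least $m^*$ with $Z\notin\Opcl{G_{m^*}}$, and the refined test $\wt C^u$ with $\leb\Opcl{\wt C^u}\le q^u$. (Two immaterial differences there: the paper spaces the clopen argument arithmetically, using shifts $rn_1$, since in $\NN^k$ the slabs for different directions are automatically disjoint once $n\ge n_1$, so your geometric spacing $n_0(k+1)^t$ is more than needed; and in the multi-operator case the relevant exponent is $s$ rather than $si$, which you correctly note, so the stage threshold $t>2s$ suffices in place of $t>(k+1)s$.) The one place you genuinely diverge is (b): the paper does not carry out the direct Schnorr-test modification but instead invokes the general observation of Rute (Remark~\ref{Rem:Rute}), which derives the Schnorr case from the classical multiple recurrence theorem in the form of Cor.~\ref{prop:FurMRT3} by intersecting the clopen approximations $\+ P_n$ with the open sets $\widehat U_n$; your route is exactly the alternative the paper alludes to with ``we could modify the previous argument,'' and it has the small advantage of applying to an arbitrary Schnorr random $Z\in\+ X$ rather than only to $Z\in\+ P$ as in the remark. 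Minor notational slips to fix: in your definitions of $\+ Q$ and $\+ Q_v$ the condition should read $Y\notin T_i^{-n_t}(\+ P)$ (equivalently $T_i^{n_t}(Y)\notin\+ P$), not ``$T_i^{-n_t}(Y)\notin\+ P$,'' and the elements of $C^r$ are arrays of \emph{size} $t$ rather than strings of length $t$.
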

\begin{proof} For the duration of this proof, by an \emph{array} we mean a map $\sss \colon \{0, \ldots, n-1\}^k \to \{0,1\}$. We call $n$ the size of $\sss$ and write $n= \sssl$. The letters $\sss, \tau, \rho, \eta $ now denote arrays. For $s \le n$ and $i \le k$ let $(\sss)_{i,s}$ be the array $\tau$ of size $n-s$ such that   \bc $\tau (u_1, \ldots, u_k) = \sss (u_1, \ldots, u_i+s, \ldots, u_k)$ \ec
 for $u_1, \ldots, u_k \le n-s$.  This operation removes $s$ faces in direction $i$, and then cuts the opposite faces in the remaining directions  in order to obtain an array.
For a  set $S$ of arrays we define $\Opcl S = \{ Y  \in \+ X \colon \, \ex \sss \in S \, [\sss \prec Y]\}$ where the ``prefix"  relation $\prec$ is defined as expected.   

Suppose  that  $Z$ is not $k$-recurrent in $\+ P$ for some $k \ge 1$.

\n (a). As in Prop.\ \ref{prop: Kurtz}  we define a null $\PPI$ class  $\+ Q \sub \+ X$ containing $Z$. Let $n_1$ be least such that $\+ P = \Opcl F$ for some set of arrays of that all have size $n_1$. Let 
\[ \+ Q = \bigcap_{r \ge 1} \{ Y \colon   \bigvee_{1 \le i \le k} T_i^{rn}(Y)  \not \in \+ P\}. \]
 By the choice   of $n_1$  the conditions in  the same disjunction are independent, so  we have \[ \leb (\bigvee_{1 \le i \le k} T_i^{rn}(Y)  \not \in \+ P ) =  1 - p ^k < 1.\] The  $\PPI$ class $\+ Q$ is the independent intersection of  such classes  indexed by~$r$. Therefore $\+ Q$ is null. By hypothesis $Z \in \+ Q$. So $Z$ is not weakly random.
 
 \n (b).  We could modify the previous argument. However, this also follows by  the general fact in Remark~\ref{Rem:Rute} below. 
 
 \n (c). The argument is   similar to the proof of Theorem~\ref{thm:ML} above. The definition of the c.e.\ set $B$ and its enumeration are as before, except that  each string of length $n$ is now an array of size~$n$. In particular, an array enumerated at a stage $s$ has size $s$. 
 
 Let  $C^0$ only contain the empty array,  which is enumerated at stage $0$. Suppose  $r > 0$ and $C^{r-1}$  has been defined. Suppose $\sss$ is enumerated in $C^{r-1}$ at stage $s$ (so $\sssl =s$). 
 
 At a stage $t >  2s$,  for each array  $\eta $ of   size $t$  such that   $\eta \succ \sss$ and 
\[ \tag{$*$} \bigvee_{1 \le i \le k}  (\eta)_{i,s}  \in   B_{t-s}, \]
and no array that is a prefix of $\eta$ is in $C^r_{t-1}$,   put $\eta$  into $C^r$ at stage $t$.  
 As before one checks that   $C^r$ is prefix-free for each $r$.  

Choose a finite set of arrays   $D \sub  B$ such that the set $\wt B = B- D$ satisfies $\leb \Opcl {\wt B} < 1/k$.  
Let $N = \max \{\sssl \colon \sss \in D\}$.  
Let $C = \bigcup_r C^r$. Let $G_m$ be the set of prefix-minimal arrays $\eta  $  such that  $\eta  \in C$, and  there exist $m$ many $s> N$   as follows. 
\bi \item  $\eta \uhr {\{ 0, \ldots, s-1\}^k} \in C$, and \item for some $i$ with  $1 \le i \le k$, $(\eta)_{i,s}$ extends an array in $D$.  \ei 
 The sets $G_m $ are uniformly $\SI 1$. By  choice of $N$ and independence  $\leb \Opcl{G_{m+1}} \le (1- t^k) \leb G_m$, where $t = \leb (\cantor - \Opcl D)$. If  $Z$ is ML-random we can choose a least $m^*$ be such that $Z \not \in \Opcl{G_{m^*}}$, and  $m^*>0$ since $G_0 = \{\ES\}$. So choose $\eta \prec Z$ such that $\eta \in G_{m^*-1}$. Then 
 $\eta \in C^r$ for some  $r$, and  no $\tau$ with  $\eta \preceq \tau \prec Z$ is in $G_{m^*}$. 
 

  Let $\wt C^r = C^r$. Suppose  $u >  r$ and $\wt C^{u-1}$ has been defined. For each  $\sss \in \wt C^{u-1}$,      put into   $\wt C^{u} $ all the  arrays  $\eta \succ \sss$ in $C^{u} $ so that    ($*$) can be strengthened to   $\bigvee_{1 \le i \le k}  (\eta)_{i,s}  \in   \wt B_{t-s}$, where $s = \sssl$ and $t = |\eta|$. 
 
 Let $q = k \leb \Opcl {\wt B}$.  Then  $\leb \Opcl {\wt C^u} \le q^u$ as before. By the choice of $m^*$ we have  $Z \in \bigcap_{u\ge r} \Opcl {\wt C^u}$, so since $q   < 1$,    $Z$ is not ML-random.
\end{proof} 

\subsubsection{The  putative full result} It is likely that a multiple recurrence theorem holds in greater generality. For background on computable  probability spaces and how to define randomness notions for points in them, see e.g.\ \cite{Gacs.Hoyrup:11}.


\begin{conjecture} Let $(X, \mu) $ be a computable probability space. Let $T_1, \ldots, T_k$ be computable measure preserving transformations that commute pairwise. Let $\+ P $ be a $\PPI$ class with $\mu P> 0$. 

If $z \in \+ P$ is ML-random then $\ex n  [ z \in \bigcap_{  i \le k} T^{-n}_i ( \+ P)]$. 
\end{conjecture}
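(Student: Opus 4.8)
The plan is to effectivise Furstenberg's proof of multiple recurrence, reducing in stages to a situation where recurrence can be obtained with a \emph{computable} quantitative bound, and then to turn that bound into a \ML-test exactly as in the proof of Theorem~\ref{thm:ML}. First I would normalise: pass to $(\cantor,\leb)$ via a computable mod-$0$ isomorphism of the given computable probability space with Cantor space (which preserves \ML-randomness, carries $\PPI$ classes to $\PPI$ classes up to a null $\PPI$ set, and keeps the $T_i$ computable, measure preserving and commuting), and then reduce to $\mu$ ergodic using the effective ergodic decomposition. Here one checks that if $z\in\+P$ is \ML-random for $\mu$ then its ergodic component $\mu_z$ is computable in $z$, that $z$ is \ML-random for $\mu_z$, and that necessarily $\mu_z(\+P)>0$, since a $\PPI$ class that is $\mu_z$-null is covered by a $\mu_z$-\ML-test; recurrence for the component then transfers back to $\mu$.

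The analytic heart is to produce, for ergodic $\mu$, an \emph{effective} characteristic factor. For the classical case $T_i=V^i$ I would run the Host--Kra--Ziegler construction with effective estimates: the $(k-1)$-step factor is an inverse limit of nilsystems, and the conditional expectations defining it are limits of explicit ergodic averages, so the factor should be presentable as a computable factor of $(X,\mu,V)$ with a computable induced nilrotation. On each nilsystem, multiple recurrence reduces to equidistribution of a polynomial orbit in a subnilmanifold, for which quantitative (hence computable) equidistribution is available; this yields a positive computable constant $c$ and a computable $N_0$ with $\frac{1}{N_0}\sum_{n=1}^{N_0}\mu(R_n^{\mathrm{fac}})\ge c$ on the factor, where $R_n^{\mathrm{fac}}$ is the level-$n$ return set there. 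One then lifts through the compact extensions of the inverse limit using effective Birkhoff for the lower-semicomputable function $1_{\cantor-\+P}$ along the orbit of the diagonal point in the relevant self-joining, obtaining a computable rate of decay for $\leb\{x\in\+P:\forall n\le N,\ x\notin\bigcap_i T_i^{-n}\+P\}$ as $N\to\infty$. For general commuting $T_1,\ldots,T_k$ I would instead try to extract such an effective structured system from Host's magic-extension argument (or Towsner's soft proof of $L^2$-convergence) — this is the step I am least sure survives effectivisation.

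Given a computable decay rate, the final step mirrors Theorem~\ref{thm:ML} and Proposition~\ref{prop: Kurtz}: approximate $\+P$ from outside by clopen $\+P_s$, split off a finite ``error'' part to keep the relevant local measures bounded away from $1$ and absorb it by the clopen (Kurtz) argument, replace each $\bigcap_i T_i^{-n}\+P$ by the clopen set $\bigcap_i T_i^{-n}\+P_{s(n)}$ at a stage $s(n)$ chosen deep enough, and let the open set $U_m$ consist of the points that fail every return for $n$ up to the computable threshold at which the measure drops below $\tp{-m}$. The $(U_m)$ then form a \ML-test, so a \ML-random $z\in\+P$ lies outside some $U_m$, i.e.\ is $k$-recurrent in $\+P$; as $k$ was arbitrary, $z$ is multiply recurrent, which is the conjecture. (As noted in Remark~\ref{Rem:Rute}, the Schnorr-random analogue already follows softly from the effective form of Corollary~\ref{prop:FurMRT3}; the point of the \ML\ statement is precisely that it should \emph{not} need a computable modulus, so this last step is where the combinatorial idea of Theorem~\ref{thm:ML} must replace any appeal to effective convergence.)

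The main obstacle is the structured-factor step for commuting transformations. It is unclear that the Host--Kra-type structure theory for several commuting transformations can be run effectively, and — more fundamentally — even \emph{pointwise} convergence of the averages $\frac{1}{N}\sum_{n<N}\prod_i 1_{\+P}(T_i^n x)$ is open in that generality, so the clean ``effective Birkhoff'' shortcut that settles $k=1$ (and $T_i=V^i$ with $V$ weakly mixing, where the characteristic factor is trivial) is unavailable; moreover all known quantitative forms of the Furstenberg--Katznelson theorem grow faster than any primitive recursive function, with no evident computable dependence on the presentation of the system. I therefore expect a full proof to come first for $T_i=V^i$ via the nilsequence machinery, and the general commuting case to require either an effective rendering of Towsner's proof or an entirely soft combinatorial argument bypassing the structure theorem.
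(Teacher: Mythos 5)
There is no proof to compare against: in the paper this statement is exactly what it is labelled, a \emph{conjecture}, left open; the only things established in its vicinity are that weak 2-randomness suffices (via Cor.~\ref{prop:FurMRT3}, see Remark~\ref{Rem:Rute}), that Schnorr randomness suffices on Cantor space when $\mu \+ P$ is computable, and the shift case of Theorem~\ref{thm:ML}. Your text is likewise not a proof but a research programme, and you say so yourself: the load-bearing step --- an effective characteristic factor (Host--Kra/nilsystem machinery for $T_i=V^i$, or an effectivisation of the magic-extension/Towsner arguments for general commuting $T_i$) yielding a quantitative recurrence bound --- is precisely what is not known, so everything downstream of it is conditional. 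As it stands the proposal cannot be accepted as settling the conjecture.

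Two further points where the sketch would need repair even granting the structure-theoretic input. First, the reduction steps are themselves not routine: passing to an ergodic component requires that the component measure $\mu_z$ of a \ML-random $z$ be computable (or at least that $z$ remain random for $\mu_z$ in a usable sense), and the ergodic decomposition of a computable system is in general \emph{not} computable, so this needs an argument, not an assertion; moreover for $k$ commuting maps one must decompose with respect to the joint $\ZZ^k$-action, not a single $T_i$. Second, there is an internal tension in the quantitative step: you propose to extract a \emph{computable} constant $c$, threshold $N_0$, and decay rate for $\leb\{x\in\+ P:\fa n\le N\, x\notin\bigcap_i T_i^{-n}\+ P\}$, but any such bound must depend on $\mu\+ P$ and on the $\PI 1$ approximation to $\+ P$, neither of which is computable; if a computable modulus were available the conclusion would already follow at the Schnorr level (as in Remark~\ref{Rem:Rute}), and the whole point of the \ML\ statement is the case where it is not. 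In Theorem~\ref{thm:ML} this is overcome not by a rate but by independence of the shift blocks (which gives the $q^r$ bound for a merely left-c.e.\ $q<1$) together with the finite-error set $D$; for general computable measure-preserving $T_i$ no analogue of that independence is on the table, and identifying a substitute is the genuinely missing idea, beyond what either your sketch or the paper provides.
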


\begin{remark} \label{Rem:Rute} {\rm   Let $U_n $ be the open set $ \{x \colon \,   x \not \in \bigcap_{i\le k} T^{-n}_i ( \+ P)\}$. Then $\mu ( \+ P \cap \bigcap_n U_n)=0$ by the classic multiple recurrence  theorem in the version of Cor.\ \ref{prop:FurMRT3}. Since $\+ P \cap \bigcap_n U_n$  is $\PI 2$, weak 2-randomness of $z$ suffices for the $k$-recurrence.

  Jason Rute has pointed out that if $X$ is Cantor space and $\mu \+ P$ is computable, then $\ex n [ z \in \bigcap_{  i \le k} T^{-n}_i ( \+ P)]$  for every Schnorr random $z \in \+ P$.  For in this case $\mu  \widehat U_n$ is uniformly computable where $\widehat U_n = \bigcap_{i< n}U_i$. Let $\+ P = \bigcap_n \+ P_n$ where the $\+ P_n$ are clopen sets computed uniformly in $n$. Let $G_n = \+ P_n \cap \widehat U_n$. Then $G_n$ is uniformly $\SI 1$ and $\mu (G_n)$ is   uniformly computable. Refining the sequence $\seq{G_n}$ we obtain a Schnorr test capturing $z$. }  \end{remark}

For general $\+ P$'s, an interesting first case would be when $T= S^i$ where $S$ is the  rotation of the unit circle of the form $z \to z e^{2 \pi i \alpha}$ for irrational computable $\alpha$. Such an $S$ is  ergodic, but not weakly mixing.


\section[Degrees of halves of left-c.e.\ randoms]{Greenberg, Turetsky and Westrick: \\ Degrees of halves of left-c.e.\ randoms}

For $\alpha$ a real, let $\pi_0(\alpha)$ denote the real made from the even bits of $\alpha$'s binary expansion, and $\pi_1(\alpha)$ the real made from the odd bits.  Thus $\alpha = \pi_0(\alpha) \oplus \pi_1(\alpha)$.  A nonempty subset of Greenberg, Miller, Nies and Turetsky wondered the following:  If $\alpha$ and $\beta$ are left-c.e.\ random reals, must $\{ \text{deg}(\pi_0(\alpha)), \text{deg}(\pi_1(\alpha))\} = \{\text{deg}(\pi_0(\beta)), \text{deg}(\pi_1(\beta))\}$.  Note that these are unordered pairs, so the question is whether $\pi_0(\beta)$ must have the same degree as one of $\pi_0(\alpha)$ or $\pi_1(\alpha)$, and $\pi_1(\beta)$ the same degree as the other.

Greenberg, Turetsky and Westrick have answered the question in the negative, via the following:

\begin{lemma}
If $\alpha$ is random, and $d = 2k+1$ is an odd integer with $d \geq 3$, then $\pi_i(\alpha) \oplus \pi_j(\alpha/d)$ can derandomize $\pi_{1-i}(\alpha)$ for $i, j < 2$.
\end{lemma}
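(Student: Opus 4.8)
The plan is to prove that $W:=\pi_{1-i}(\alpha)$ is not ML-random relative to $X:=\pi_i(\alpha)\oplus\pi_j(\alpha/d)$ -- this is what ``derandomize'' should mean here -- by producing a Solovay test relative to $X$ that succeeds on $W$. Fix $i$ and $j$ and abbreviate $A=\pi_i(\alpha)$ (the ``known half''), $\beta=\alpha/d$, $Q=\pi_j(\beta)$; the construction will be uniform in $i$ and $j$. For orientation only I note that $\beta$ is ML-random and that $\alpha\mapsto\beta$ preserves ML-randomness relative to every oracle: division by the integer $d$ is a computable injection carrying a cylinder of $\leb$-measure $2^{-n}$ to a set of measure $2^{-n}/d$, with a computable inverse effecting the reverse scaling.

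For the test, fix $c_0=\lceil\log_2 d\rceil+2$ and consider the long-division-by-$d$ transducer: reading bits of the dividend from the top it carries a remainder in $\{0,\dots,d-1\}$ and emits one quotient bit per step. For $\ell\in\NN$ and $\sigma\in\fs$ with $|\sigma|=\ell$ let $\gamma^\sigma$ be the real whose bits below $2\ell$ at the $\pi_i$-positions are $A\uhr\ell$ and at the $\pi_{1-i}$-positions are $\sigma$; call $\sigma$ \emph{$\ell$-good} if, for each of the first $\ell-c_0$ quotient positions picked out by $\pi_j$, the transducer on $\gamma^\sigma$ emits the bit recorded in $Q$, and that quotient bit is already forced by the $2\ell$ bits we fixed. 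Put $U_\ell=\bigcup\{[\sigma]:|\sigma|=\ell,\ \sigma\text{ is }\ell\text{-good}\}$ and let $N_\ell$ be its number of generators, so $\leb U_\ell=N_\ell 2^{-\ell}$; since $\ell$-goodness depends only on $A\uhr\ell$ and $Q\uhr{\ell-c_0}$, the sequence $(U_\ell)$ is uniformly $X$-computable. Claim 1: $W\in U_\ell$ for infinitely many $\ell$. Indeed $\gamma^{W}\uhr{2\ell}=\alpha\uhr{2\ell}$, so the transducer on $\gamma^{W}$ agrees through step $2\ell$ with the one on $\alpha$, whose $\pi_j$-quotient bits are by definition $Q$; the only obstruction is a ``carry boundary'' near stage $2\ell$, which for a fixed $\ell$ confines $\alpha$ to a set of $\leb$-measure at most $1/(2d)$, and these obstructions for well-separated $\ell$ involve essentially disjoint bit-windows of $\alpha$, so the set of $\alpha$ with no large carry-free $\ell$ is effectively null and is avoided by the ML-random $\alpha$.

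Claim 2 is the heart: $\sum_\ell\leb U_\ell<\infty$, for which it suffices that $E_\alpha[N_\ell]=O(\rho^{\,\ell})$ for some $\rho=\rho(d)<2$. Here $N_\ell$ counts candidate values of $W\uhr\ell$; the true value always survives, and any other $\sigma$ first deviates from $W\uhr\ell$ at some $\pi_{1-i}$-position, the $t$-th say. This is where both hypotheses on $d$ enter: $d$ odd makes $2$ a unit mod $d$, so the remainder after reading $p$ bits is just the integer they form reduced mod $d$ and hence depends nontrivially on all of them; thus a candidate that has deviated from $W$ has its remainder perturbed at every later stage, and since $d\ge 3$ there are remainders forcing quotient bit $0$ and remainders forcing quotient bit $1$, so at each of the $\approx\ell-t$ later $\pi_j$-positions the candidate's forced quotient bit matches the true one only with probability at most $1-\delta$ for a constant $\delta=\delta(d)>0$; summing over the $2^{\ell-t}$ candidates that deviate first at position $t$ gives $E_\alpha[N_\ell]\le 1+\sum_t 2^{\ell-t}(1-\delta)^{\ell-t}=O(\rho^{\,\ell})$ with $\rho=\max(1,2(1-\delta))<2$. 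Rigorously one packages this as a product of transfer matrices on $(\ZZ/d\ZZ)^2$ (tracking the remainders of the candidate and of $\alpha$), whose expected one-step operator has spectral radius below $2$ precisely because the ``agrees with the observed quotient bit'' condition kills a branch from a positive fraction of states -- and it fails, with spectral radius $2$, for $d=1$ and for $d$ a power of $2$ with the unfavourable $j$, exactly where the lemma is false. Given $E_\alpha[N_\ell]=O(\rho^{\,\ell})$ we get $E_\alpha[\sum_\ell N_\ell 2^{-\ell}]\le\sum_\ell O((\rho/2)^{\ell})<\infty$; since each partial sum $\sum_{\ell\le L}N_\ell 2^{-\ell}$ is a function of $\alpha\uhr{2L}$ bounded by the measure of a clopen set, Markov's inequality shows $\{\alpha:\sum_\ell N_\ell 2^{-\ell}>2^{k}\}$ is open of measure $O(2^{-k})$, whence $\{\alpha:\sum_\ell N_\ell 2^{-\ell}=\infty\}$ is effectively null and the ML-random $\alpha$ satisfies $\sum_\ell\leb U_\ell<\infty$.

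Combining Claims 1 and 2, $(U_\ell)$ is an $X$-c.e.\ sequence of open sets of finite total $\leb$-measure with $W$ in infinitely many of them, i.e.\ a Solovay test relative to $X$ succeeding on $W$; hence $W$ is not ML-random relative to $X$, which is the lemma. The main obstacle is Claim 2 -- concretely, the uniform bound $\rho(d)<2$ on the expected transfer operator, equivalently the assertion that a candidate half, once it has deviated from the true one, disagrees with the observed quotient bits at a positive density of positions; this is the only place the two hypotheses on $d$ are used. The remaining ingredients (Claim 1, the Markov plus effective Borel--Cantelli step, and the Solovay-test criterion) are routine.
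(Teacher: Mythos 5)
Your overall framework (a relative Solovay test built from counting candidate halves consistent with the observed quotient bits) is a legitimate way to interpret ``derandomize'', and your Claim~1 is actually easier than you make it: since the first $n$ quotient bits of $\alpha/d$ are determined by the first $n$ bits of $\alpha$ (the quotient of $\mathrm{int}(\alpha\uhr n)\div d$ is $\mathrm{int}((\alpha/d)\uhr n)$), the true half is $\ell$-good for \emph{every} $\ell$, with no carry analysis and no appeal to randomness. The genuine gap is exactly where you flag it: Claim~2. The assertion that once a candidate deviates it fails each later $\pi_j$-constraint with conditional probability at least $\delta(d)>0$, equivalently that the ``expected transfer operator'' on pairs of remainders has spectral radius strictly below $2$, is never proved; it is the entire content of the lemma (it is the only place the hypotheses $d$ odd and $d\ge 3$ enter), and it is not obviously true as stated, since deviated candidates can have their remainder re-merge with the true remainder (the difference evolves by $e\mapsto 2e+(b'-b) \bmod d$ and can return to $0$), so the per-step disagreement probability needs a genuine argument, not just ``$2$ is a unit mod $d$''. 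As it stands the proposal reduces the lemma to an unproven spectral estimate, so it does not constitute a proof.

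For comparison, the paper's proof sidesteps all of this with one elementary observation about long division: if after $n$ bits the remainder of $\mathrm{int}(\alpha\uhr n)\div d$ equals $k$ (the middle remainder, where $d=2k+1\ge 3$ so $k\ge1$), then each of the next two quotient bits of $\alpha/d$ individually determines the next bit $b_0$ of $\alpha$: one checks $c_0=1\LR b_0=1$ and $c_1=1\LR b_0=0$. Randomness of $\alpha$ gives infinitely many such $n$ of each parity. Hence a martingale computable from $\pi_i(\alpha)\oplus\pi_j(\alpha/d)$ can reconstruct $\alpha\uhr n$ (combining its oracle half with the bits it is betting on), wait for remainder $k$ at a position of the right parity, read the relevant bit of $\pi_j(\alpha/d)$ from its oracle, and place a sure bet on the next bit of $\pi_{1-i}(\alpha)$; it never loses and doubles infinitely often, so it succeeds. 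This gives the conclusion (indeed failure of computable randomness relative to the oracle, which is stronger than the non-ML-randomness your test would give) with no counting, no expectation over $\alpha$, and no Borel--Cantelli step. If you want to salvage your route, the remainder-$k$ mechanism is also the natural source of the positive-density disagreement you need in Claim~2.
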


Presumably a stronger result is possible: relative to $\pi_j(\alpha/d)$, $\pi_i(\alpha)$ should have effective Hausdorff dimension at most 1/2, with no assumptions on $\alpha$.

\begin{proof}
The proof is by long division.  For $\sigma \in 2^{<\omega}$, let $\text{int}(\sigma)$ be the integer denoted by $\sigma$ as a big-endian binary representation.  That is, $\text{int}(\sigma) = \sum_{\ell < |\sigma|} \sigma(\ell)\cdot 2^{|\sigma|-\ell - 1}$.  We will be making reference to $\text{int}(\alpha\!\!\upharpoonright_n)$.  We assume $\alpha$ is a real between 0 and 1, so we identify it with the infinite sequence of 0s and 1s in its binary expansion to the right of the radix symbol.  When passing from $\alpha$ to $\alpha\!\!\upharpoonright_n$ to $\text{int}(\alpha\!\!\upharpoonright_n)$, we drop the radix symbol to obtain an integer.

Recall the long division algorithm.  When performing the division $\text{int}(\alpha\!\!\upharpoonright_n) \div d$, the quotient is $\text{int}((\alpha/d)\!\!\upharpoonright_n)$, and there is some remainder $r < d$.  Further, if $b_0$ and $b_1$ are the next two bits of $\alpha$ and $c_0$ and $c_1$ are the next two bits of $\alpha/d$, so that $\alpha\!\!\upharpoonright_{n+2} = (\alpha\!\!\upharpoonright_n)*b_0b_1$ and $(\alpha/d)\!\!\upharpoonright_{n+2}=((\alpha/d)\!\!\upharpoonright_n)*c_0c_1$, then the quotient of $(4r +2b_0+b_1) \div d$ is $\text{int}(c_0c_1) = 2c_0 + c_1$, again with some remainder.  This is simply the ``carry'' procedure of long division, performed over two bits at a time rather than a single bit.

Recall that $d = 2k+1$.  Since $\alpha$ is random, there are infinitely many $n$ such that $\text{int}(\alpha\!\!\upharpoonright_n)\div d$ has remainder $k$.  In fact, there are infinitely many such $n$ which are even and infinitely many which are odd.  For such an $n$, let $b_0, b_1, c_0$ and $c_1$ be as above.  Observe that $4k + 2b_0 + b_1 \ge 2(2k+1)$ iff $b_0 = 1$.  So $c_0 = 1$ iff $b_0 = 1$.  Also, if $b_0 = 1$, then $4k+2b_0+b_1 < 2(2k+1) + (2k+1)$, since $d \ge 3$ and thus $k \ge 1$.  On the other hand, if $b_0 = 0$, then $4k+2b_0 + b_1 > 2k+1$, again since $k \ge 1$.  Thus $c_1 = 1$ iff $b_0 = 0$.

We now describe a martingale computable from $\pi_i(\alpha) \oplus \pi_j(\alpha/d)$ that succeeds on $\pi_{1-i}(\alpha)$.  By reading bits of $\pi_{1-i}(\alpha)$ and combining this with $\pi_i(\alpha)$ from its oracle, our martingale can obtain initial segments of $\alpha$.  For each $\alpha\!\!\upharpoonright_n$ it computes the remainder of $\text{int}(\alpha\!\!\upharpoonright_n)\div d$.  When it sees that the remainder is $k$, and $n$ is such that the next bit of $\alpha$ is from $\pi_{1-i}(\alpha)$ ($n$ is even or odd, as appropriate), it is ready to bet on the next bit of $\pi_{1-i}(\alpha)$.  By the calculations above, both the $n$th bit $c_0$ and the $n+1$st bit $c_1$ of $\alpha/d$ determine the $n$th bit of $\alpha$.  From the $\pi_j(\alpha/d)$ in the oracle, our martingale knows one of these bits, and so it knows the next bit of $\pi_{1-i}(\alpha)$.  So it bets all its money on this bit.  Our arguments above show that this martingale succeeds on $\pi_{1-i}(\alpha)$.
\end{proof}

Now, suppose $\alpha$ is left-c.e.\ and random.  Then $\beta = \alpha/3$ is also left-c.e.\ and random.  However, neither of $\pi_0(\beta)$ or $\pi_1(\beta)$ can be computable from $\pi_0(\alpha)$, as the lemma would then say that $\pi_0(\alpha)$ could derandomize $\pi_1(\alpha)$, contrary to van Lambalgen's theorem.

\part{Randomness and analysis}

\newtheorem{problem}{Problem}
\section{Rute: Research directions and open problems for ARA 2014 Japan}

(By Jason Rute, Pennsylvania State University.) This is a revised version on a list of research directions and open problems for Analysis, Randomness, and Applications (ARA) 2014 in Japan%
\footnote{
\url{http://kenshi.miyabe.name/ara2014/}}%
.  (The author was not in attendance and sent these notes in absentia.)

\subsection{Which randomness notions are natural?}

\subsubsection{Determine the natural randomness notions}

As already pointed out by Schnorr \cite{Schnorr:75} and even, perhaps,  \ML\  \cite{MartinLof:68}, there is not just one natural randomness notion.
There are at least two---ML-randomness and Schnorr-randomness---and probably more---computable
randomness, $n$-randomness, weak $n$-randomness, and higher randomness
notions.  Here are some problems attempting to systemically understand the
collection of randomness notions.
\begin{problem}
Axiomatize randomness. 
\end{problem}
Axioms will probably include versions of preservation of randomness,
no randomness from nothing, and van Lambalgen's theorem---or possibly an entirely different approach.  Van Lambalgen \cite{vLamb:90} attempted an axiomatization.  Recently, both Rute and Simpson%
\footnote{See \url{http://homepages.inf.ed.ac.uk/als/Talks/leeds-istr15.pdf}.} %
have also been working separately on axiomatizations.

\begin{problem}
Characterize the nice randomness notions.
\end{problem}
This is just another way of stating the last problem. I suspect that
the ``nice randomness'' notions will come out to be those equal
to Schnorr randomness relative to a class of oracles. 
(For example, it is already known $x$ is ML random is and only if it is equal to computable randomness relative to a PA degree.  This follows from the paper of Brattka, Miller, and Nies \cite{Brattka.Miller.ea:16}.)

\begin{problem}
Show that the ``weird but natural'' randomness notions are more
natural randomness notions in disguise.
\end{problem}
This has already been done for strong $s$-randomness and $s$-energy
randomness. They are equivalent to randomness for capacities (or equivalently
randomness for effectively compact classes of measures). I conjecture UD-randomness
is really Schnorr randomness for a class of measures, and the differentiability
points of all Lipschitz functions of type $\mathbb{R}^{n}\rightarrow\mathbb{R}^{m}$
($m<n$) are the computable randoms for a certain class of measures.

\begin{problem}%
\footnote{This problem is a major part of his Alex Galicki's ongoing PhD thesis. I communicated my conjectures to him in Spring 2014 suggesting that he work this out.} Let $m\geq1$ and $n\geq 1$.  Characterize the points of differentiability of all computable Lipschitz functions of type $\mathbb{R}^{n}\rightarrow\mathbb{R}^{m}$.  

More specifically, is the following conjecture true? If $x\in \mathbb{R}^n$ is a point of differentiability of all computable Lipschitz functions of type $\mathbb{R}^{n}\rightarrow\mathbb{R}^{m}$ iff
\begin{enumerate} 
\item ($m\geq n$) $x$ is computably random w.r.t.\  $\mathbb{R}^n$ with the Lebesgue measure.\footnote{%
Compare with Preiss and Speight \cite[Paragraph above Thm 1.1]{Preiss.Speight:15}.}
\item ($m = 1 < n$) $x$ is computably random w.r.t.\ some measure $\mu$ (possibly non-computable) with $n$ independent Alberti's representations (equivalently, $(\mathbb{R}^n,\mu)$ is a Lipschitz differentiability space).  (Possibly some smaller subclass of measures $\mu$ is sufficient.)\footnote{%
Compare with Alberti, Csörnyei, and Preiss \cite{Alberti.Csornyei.ea:10} (also the slide ``Differentiability and singular measures'' in \url{https://www.ljll.math.upmc.fr/~lemenant/GMT/preiss.pdf}) and with Bates \cite{Bate:15} which generalizes Alberti, Csörnyei, and Preiss.}
\item ($m < n$) $x$ is computably random w.r.t.\ some measure $\mu$ in a nice class of measures.
\end{enumerate}

\end{problem}

On a related note, we need to be more careful what we call randomness
notions. Randomness notions are associated to a measure (or class
of measures). Many things we call randomness notions are not randomness
notions on the Lebesgue measure. Here I am defining a randomness notion
as a class of ``computable tests'' $T$, each of which is identified
with a null set, and this class of tests can also be relativized to
oracles. A ``randomness notion'' should only be considered a randomness
notion \emph{for Lebesgue measure} if for each Lebesgue null set
$N$, there is a test $T$ relative to an oracle such that $N\subseteq T$.
Therefore Kurtz randomness and UD randomness are not randomness notions
on the Lebesgue measure!
\begin{problem}
Understand Martin-Löf randomness for lower-semicomptuable semi-measures
better.

There are a lot of definitions out there, both of lower-semicomputable
semi-measure, and of randomness for such an object. This needs to
be worked out. We actually implicitly use randomness for semi-measures
already in many of our proofs. (For example, the proof of Miller and
Yu's theorem that if $X$ is MLR, $X\leq_{T}Y$, and $Y$ is MLR relative
to $Z$, then $X$ is MLR relative to $Z$.)
\end{problem}

\subsection{Studying the structure of the ``random degrees''}

The LR degrees, while a good start, are too coarse for this purpose.
The Turing degrees have very little connection to randomness. The
truth-table degrees are better; the set $\mathsf{MLR}_{\textnormal{comp}}=\{x\mid x\in\mathsf{MLR}_{\mu},\mu\text{ computable}\}$
is downward closed in the truth-table degrees (and the same for $\mathsf{SR}_{\textnormal{comp}}$).
However, we can do better. I have a number of ideas for this project
(see my slides from the 2013 Nancy ARA meeting at \url{http://www.personal.psu.edu/jmr71/talks/rute_2013_ARA.pdf}). Here are some related
problems.
\begin{problem}\label{Prob6}
(Bienvenu and Porter) If $T$ is a computable measure-preserving map
and $y$ is Schnorr random, is there some Schnorr random $x$ such
that $T(x)=y$?
\end{problem}
(Update: Problem \ref{Prob6} has been since answered negatively by Rute.)

For this next problem, if $\mu$ is a computable measure on $2^{\mathbb{N}}\times2^{\mathbb{N}}$,
then there is a kernel measure $\mu(\cdot\mid x)$ defined by $\mu(\sigma\mid x)=\lim_{n}\frac{\mu([x\upharpoonright n]\times[\sigma])}{\mu([x\upharpoonright n]\times2^{\mathbb{N}})}$.
If $x$ is computably random on the marginal measure $\mu_{1}$ (where
$\mu_{1}(\sigma)=\mu([\sigma]\times2^{\mathbb{N}})$) then $\mu(\cdot\mid x)$
is a measure, although it may not be computable from $x$. 
\begin{problem}
(Shen, Takahashi, Bauwens) Suppose $\mu$ is computable measure on $2^{\mathbb{N}}\times2^{\mathbb{N}}$
and $x$ is Martin-Lof random on $\mu_{1}$.  Characterize the
set $\{y\mid(x,y)\in\mathsf{MLR}_{\mu}\}$ in terms of $x$ and $\mu(\cdot\mid x)$.
Is this possible without knowing $\mu$?
\end{problem}
Takahashi conjectured that $(x,y)\in\mathsf{MLR}_{\mu}$ if and only
if $x\in\mathsf{MLR}_{\mu_{1}}$ and $y$ is blind random for $\mu(\cdot\mid x)$
relative to $x$. But this is wrong. Bauwens gave a counterexample
(communicated to Shen and me).

\begin{problem}
If $x$ is computably random, and $y$ is computably random relative
to $x$, then is $(x,y)$ necessarily computably random?
\end{problem}

\subsection{Extending results about MLR to SR}

Most of the results about MLR and analysis extend to SR. This includes
almost all the work on basic measure theory, almost all the work on
Brownian motion, probably much of the work on effective dimension,
and a good deal of the results about ergodic theory. (Miyabe has also
done a great job extending the results on LR degrees.) The tools to
do this are almost mature. They include
\begin{enumerate}
\item A good understanding of the connection between measurable functions
and Schnorr randomness, including Schnorr layerwise computability
and preservation of Schnorr randomness. (Miyabe \cite{Miyabe:13}; Pathak, Rojas, Simpson \cite{Pathak.Rojas.ea:12}; Rute \cite{Rute:13})
\item A good understanding of the connection between effective rates of
convergence and Schnorr randomness. (Gács, Hoyrup, Rojas \cite{Gacs.Hoyrup:11}; Galatolo,
Hoyrup, Rojas \cite{Galatolo.Hoyrup.ea:10}; Rute \cite{Rute:13})
\item A good-enough version of no-randomness-from-nothing. (Rute%
\footnote{In preparation. See slides at \url{http://www.personal.psu.edu/jmr71/talks/rute_2013_ARA.pdf}.}%
)
\item Van Lambalgen's theorem holds for Schnorr randomness under uniform
reducibility. (Miyabe \cite{Miyabe:11}; Miyabe, Rute \cite{Miyabe.Rute:13})
\item Schnorr randomness can be defined on non-computable measures. (Rute%
\footnote{In
preparation. See slides at \url{http://www.personal.psu.edu/jmr71/talks/rute_2014_jmm.pdf}.}%
)
\end{enumerate}
Now the problem is to actually do the work. (Update: Rute has slowly been compiling a bunch of  facts about Schnorr randomness and Brownian motion.)

\subsection{Gathering the information we have on randomness and analysis}

There has been a lot of work done in the last decade on randomness
and analysis, but it is spread throughout a bunch of papers. There
are not any books yet on the subject. (The closest approximations are
Gács's lecture notes \cite{Gacs:05}; the article by Bienvenu, Gács, Hoyrup, Rojas,
and Shen \cite{Bienvenu.Gacs..ea:11}; and the second part of Rute's thesis \cite{Rute:13}.)
It would be nice to gather this material better.  (Update: Rute is currently writing a survey article on randomness and analysis.)

\part{Computability theory and its connections to other areas}

\section[Patey: effectively bi-immune sets]{Patey: effectively bi-immune sets and  \\ computably bounded DNC functions}

The following section has been written by Ludovic Patey in March 2015.

\begin{definition}
A function $f$ is $h$-bounded for some computable $h$ if $f(e) \leq h(e)$ for all $e$.
A set $A = \{x_0 < x_1 < \dots \}$ is $h$-bounded if its principal function ($p_A : n \mapsto x_n$) is $h$-bounded.
A function $f$ is \emph{fixed-point free} if $W_{f(e)} \neq W_e$ for all $e$.
A function $f$ is \emph{diagonally non-computable} (DNC) if $f(e) \neq \Phi_e(e)$ for all $e$.
A function $f(\cdot, \cdot)$ is \emph{escaping} if $|W_e| \leq n \rightarrow f(e,n) \not \in W_e$ for all $e$.
\end{definition}

The degrees of DNC functions are known to be equivalent to the degrees of effectively immune sets.
Jockusch and Lewis~\cite{Jockusch2013Diagonally} proved that one can compute a bi-immune set from a DNC function,
and asked whether every DNC function computes an effectively bi-immune set.
Beros~\cite{Beros2013DNC} answered negatively with an elaborate construction.
We prove that every effectively bi-immune set computes a computably bounded DNC function.
This fact is sufficient to answer Jockusch and Lewis question, since it is known
that there exists a DNC function computing no computably bounded DNC function~\cite{Kjos.Lempp.ea:04}.

\begin{lemma}\label{lem:coimmune-bound}
Every effectively co-immune set is computably bounded.
\end{lemma}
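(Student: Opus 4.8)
The plan is to convert the effective co-immunity witness directly into a computable upper bound for the principal function $p_A$. Fix a total computable $f$ witnessing that $\bar A$ is effectively immune, i.e.\ $W_e \subseteq \bar A \Rightarrow |W_e| \le f(e)$ for every $e$; equivalently, by contraposition, every c.e.\ set $W_e$ with $|W_e| \ge f(e)+1$ must contain an element of $A$. The feature I will exploit is that the recursion theorem lets us build a c.e.\ set $W_e$ whose construction already knows the number $f(e)$, so we can make $W_e$ a finite block of exactly $f(e)+1$ consecutive integers placed wherever we wish, thereby forcing an element of $A$ into a prescribed finite interval.

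Concretely, I would define a computable $\ell$ by recursion. Set $b_0 = -1$ (so the block $\{0,\dots,b_0\}$ is empty). Given $b_{i-1}$, apply the parametrised (hence effective) recursion theorem to the computable map sending a pair $(e,b)$ to an index for the finite set $\{b+1,b+2,\dots,b+1+f(e)\}$; this yields an index $e_i$, computable from $b_{i-1}$, with $W_{e_i} = \{b_{i-1}+1,\dots,b_{i-1}+1+f(e_i)\}$. Put $b_i = b_{i-1}+1+f(e_i)$ and finally $\ell(n) = b_n$. Since $f$ is total and the recursion theorem is effective, $\ell$ is total computable. For correctness: $|W_{e_i}| = f(e_i)+1 > f(e_i)$, so effective co-immunity forces $W_{e_i}\cap A \neq \emptyset$, and $W_{e_i}$ is contained in the interval $(b_{i-1},b_i]$; since the intervals $(b_0,b_1],\dots,(b_{n-1},b_n]$ are pairwise disjoint and their union is $[0,b_n]$, we get $|A \cap [0,b_n]| \ge n$ for every $n$. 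Writing $A=\{x_0<x_1<\cdots\}$, this gives $x_{n-1}\le \ell(n)$, i.e.\ $p_A(n)\le \ell(n+1)$, so $A$ is $h$-bounded with $h(n)=\ell(n+1)$. As a byproduct, $|A\cap[0,\ell(n)]|\ge n$ re-proves that $A$ is infinite, so $p_A$ is total and the statement is not vacuous.

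I do not expect a genuine obstacle: the content is entirely the one-line observation that an over-large c.e.\ set must meet $A$, combined with the recursion theorem's ability to reveal the value $f(e)$ before $W_e$ is enumerated. The only point needing care is the bookkeeping in the recursion-theorem step, namely producing the index $e_i$ uniformly in the already-computed number $b_{i-1}$; this is exactly what the parametrised recursion theorem applied to the computable map above delivers, so it is routine.
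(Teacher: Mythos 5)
Your proposal is correct and follows essentially the same route as the paper: both use the parametrised recursion theorem to produce, uniformly in the previously computed bound, an index $e$ whose c.e.\ set is a block of consecutive integers of length exceeding the co-immunity bound at $e$, so that the block must meet the set, and then iterate this to obtain a computable bound on the principal function. The only differences are cosmetic (your explicit $f(e)+1$ versus the paper's strict-inequality convention for $h$-co-immunity, and a slight shift in indexing).
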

\begin{proof}
Let $X$ be an $h$-co-immune set for some computable function~$h$.
We first build a computable function $f$ such that $f(n)$ bounds the $n$th element of $X$.
Let $j$ be a computable function which on input $n$ returns the value $n+h(e)$
for some $e$ such that $W_e = [n, n+h(e))$. Such a function is computable by uniformity
of Kleene's recursion theorem. Consider the following $f$ defined by
$$
f(0) = 0 \mbox{ and } f(n+1) = j(f(n))
$$
We prove by induction over $n > 0$ that the $n$th element
of $X$ is smaller than $f(n)$. Suppose that the first $n$ elements
of $X$ are smaller than $f(n)$.
$f(n+1) = j(f(n)) = f(n) + h(e)$ for some $e$ such that $W_e = [f(n), f(n)+h(e))$.
$|W_e| \geq h(e)$ so by $h$-co-immunity of $X$, $W_e \not \subset \overline{X}$
and so there must be an element of $X$ in the interval $[f(n), f(n)+h(e))$
and therefore the $(n+1)$th element of $X$ is smaller than $f(n)+h(e) = f(n+1)$.
\end{proof}

\begin{theorem}\label{thm:bounded-equivalence}
Fix a set $X$. The following are equivalent:
\begin{itemize}
	\item[(i)] $X$ computes a computably bounded effectively immune set.
	\item[(ii)] $X$ computes a computably bounded fixed-point free function.
	\item[(iii)] $X$ computes a computably bounded DNC function.
	\item[(iv)] $X$ computes a computably bounded escaping function.
\end{itemize}
\end{theorem}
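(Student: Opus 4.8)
The plan is to prove the cycle of implications $(i) \Rightarrow (ii) \Rightarrow (iii) \Rightarrow (iv) \Rightarrow (i)$, since some of these arrows are already essentially folklore and the real content sits in closing the loop back to $(i)$. For $(ii) \Rightarrow (iii)$: given a computably bounded fixed-point free function $f$, pass through the standard translation (via the recursion theorem) that turns an FPF function into a DNC function; one checks the translation is a truth-table reduction with computable use-and-value bounds, so the resulting DNC function remains computably bounded. For $(iii) \Rightarrow (iv)$: given a computably bounded DNC function $g$ with $g(e) \le h(e)$, one builds an escaping function $f(e,n)$ as follows. By the recursion theorem, uniformly in a finite set $F$ we can find an index $e_F$ with $W_{e_F}$ behaving as a chosen finite set; using this, $f(e,n)$ searches for a value outside $W_e$ by querying finitely many indices and using the DNC property to force disagreement. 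The bound on $f$ comes from the computable bound on $g$ together with the (computable) bookkeeping of which indices get queried. For $(iv) \Rightarrow (i)$: given a computably bounded escaping function $f$, one constructs a computably bounded effectively immune set $A$; the ``escaping'' property directly says that whenever $|W_e| \le n$ then $f(e,n) \notin W_e$, which is exactly the shape of an effective immunity witness, and one arranges $A$ to be built from these witnesses while keeping its principal function computably bounded.

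The implication $(i) \Rightarrow (ii)$ is the place to be slightly careful. Given a computably bounded effectively immune set $A$ with immunity witness coming from a computable $h$, recall the classical fact that from any infinite set with an effective immunity witness one can compute an FPF function: given an index $e$, run the enumeration of $W_e$ and wait; by effective immunity $W_e$ cannot contain too large an initial chunk of $A$ relative to $|W_e|$, so one can compute some element of $A \setminus W_e$ or detect $W_e \ne A$-style disagreement and output a suitable value $f(e)$ with $W_{f(e)} \ne W_e$. The point requiring attention is that this $f$ must be \emph{computably bounded as a function}, i.e.\ $f(e) \le \widetilde h(e)$ for some computable $\widetilde h$. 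This is where the computable bound on the principal function $p_A$ of $A$ is used: the search for an element of $A$ outside $W_e$ terminates at some stage and at some element bounded by $p_A$ evaluated at a computably-bounded argument, hence $f(e)$ itself is bounded by a composition of computable functions.

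I expect the main obstacle to be $(iii) \Rightarrow (iv)$, and symmetrically $(iv) \Rightarrow (i)$ in the direction of keeping the bound computable: the conceptual content of each implication is routine, but the escaping function $f(e,n)$ has \emph{two} inputs and must escape every $W_e$ of size $\le n$ \emph{simultaneously} as $n$ grows, so one must organize the recursion-theorem fixed points and the queries to the DNC function $g$ so that (a) the disagreement is genuinely forced for each $(e,n)$, and (b) the value $f(e,n)$ stays below a computable bound that is allowed to depend on both $e$ and $n$. The use of Kleene's recursion theorem with uniformity — exactly in the style already used in the proof of Lemma~\ref{lem:coimmune-bound} above, where $j$ is obtained ``by uniformity of Kleene's recursion theorem'' — is the right tool, and the bound for $f$ will come out as a composition of $h$ with the computable function controlling which indices are frozen. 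Once that careful bound-tracking is in place the remaining implications are bookkeeping.
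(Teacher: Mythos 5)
Your overall strategy is the same as the paper's: the paper's proof is literally the one sentence that these are the standard equivalences between effectively immune sets, FPF functions, DNC functions and escaping functions, with the extra observation that each standard reduction transmits the computable bound. So the plan of re-running the classical arguments while tracking the bound is exactly right. However, two of your individual sketches would not work as written, and one of them is the step you yourself identify as the crux.

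For (i)$\Rightarrow$(ii) you propose to ``run the enumeration of $W_e$ and wait'' and then ``compute some element of $A\setminus W_e$ or detect $W_e\neq A$-style disagreement''. Neither is possible: non-membership in $W_e$ is not semi-decidable, so no procedure can uniformly output an element of $A\setminus W_e$ (consider $W_e=\NN$), and in any case an FPF value $f(e)$ must be an \emph{index}, not an element of $A$. The standard argument never looks at $W_e$ at all: if $A$ is effectively immune via the computable function $h$, let $f(e)$ be a c.e.\ index for the finite set $D_e$ consisting of the first $h(e)+1$ elements of $A$; if $W_e=W_{f(e)}$ then $W_e\subseteq A$ and $|W_e|>h(e)$, contradicting effective immunity. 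With this construction your bound bookkeeping does go through: the elements of $D_e$ are at most $b(h(e))$ where $b$ computably bounds $p_A$, the canonical index of $D_e$ is therefore computably bounded in $e$, and the computable passage from canonical to c.e.\ indices preserves a computable bound. For (iii)$\Rightarrow$(iv), ``searching for a value outside $W_e$'' fails for the same semi-decidability reason; the missing idea is a diagonal one. Given $e,n$, use the s-m-n theorem to obtain indices $d_j=d(e,n,j)$ for $j<n$ such that $\Phi_{d_j}(d_j)$ is the $j$-th coordinate of the $(j+1)$-st element enumerated into $W_e$, numbers being read as codes of $n$-tuples (undefined if that element does not exist or is not a tuple code). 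If $g$ is DNC, then $f(e,n)=\langle g(d_0),\dots,g(d_{n-1})\rangle$ differs from every element of $W_e$ whenever $|W_e|\le n$: each such element is the $(j+1)$-st enumerated for some $j<n$ and, if it codes an $n$-tuple, disagrees with $f(e,n)$ at coordinate $j$. Since $d$ is computable and $g$ is bounded by a computable $h$, the value $f(e,n)$ is bounded by a computable function of $(e,n)$, as required. (Alternatively one can route through a computably bounded function $F$ with $C(F(m))\ge m$ in the style of Kjos-Hanssen, Merkle and Stephan.) Your sketch of (iv)$\Rightarrow$(i) is also only a gesture, but there the shape is right and a concrete construction (for instance $\{\langle m,F(m)\rangle: m\in\NN\}$ for such a complex function $F$, whose principal function is computably bounded when $F$ is) settles it; once these constructions are fixed, the bound transmission is the routine part, just as the paper asserts.
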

\begin{proof}
This is exactly the standard proof of equivalent between effectively immune sets, fixed-point free functions,
DNC functions and escaping function, noticing that we can transmit the computable bound.
\end{proof}

\begin{corollary}
Every effectively bi-immune set computes a computably bounded DNC function.
\end{corollary}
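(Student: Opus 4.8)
The plan is to observe that an effectively bi-immune set is simultaneously effectively immune and effectively co-immune, and then simply chain together the two results already established. Let $A$ be effectively bi-immune, so that both $A$ and $\overline{A}$ are effectively immune; in particular $A$ is effectively co-immune. By Lemma~\ref{lem:coimmune-bound}, $A$ is then computably bounded.

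Now $A$ itself witnesses clause (i) of Theorem~\ref{thm:bounded-equivalence} for $X = A$: it is a computably bounded effectively immune set, and trivially $A \leT A$. Applying the implication (i)~$\Rightarrow$~(iii) of that theorem, $A$ computes a computably bounded DNC function, which is exactly the assertion of the corollary. The computable bound for the DNC function is obtained by transporting, through the standard equivalences of Theorem~\ref{thm:bounded-equivalence}, the bound that Lemma~\ref{lem:coimmune-bound} extracts from the effective co-immunity of $A$.

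There is essentially no obstacle here once Lemma~\ref{lem:coimmune-bound} and Theorem~\ref{thm:bounded-equivalence} are in hand; the corollary is genuinely immediate. The only point requiring a moment's care is unwinding the definition of effective bi-immunity so that it feeds in \emph{both} hypotheses correctly: one uses effective immunity of $A$ to land in clause (i), and effective immunity of $\overline{A}$ (equivalently, effective co-immunity of $A$) to invoke Lemma~\ref{lem:coimmune-bound} for the bound. Combined with Theorem~\ref{thm:bounded-equivalence} and the cited fact that some DNC function computes no computably bounded DNC function~\cite{Kjos.Lempp.ea:04}, this also settles the Jockusch--Lewis question negatively: Beros's effectively bi-immune set of DNC degree cannot be computed by the latter DNC function.
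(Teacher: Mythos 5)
Your proof is correct and is exactly the derivation the paper intends: effective bi-immunity gives effective co-immunity of $A$, so Lemma~\ref{lem:coimmune-bound} shows $A$ is computably bounded, and then $A$ itself witnesses clause (i) of Theorem~\ref{thm:bounded-equivalence} with $X=A$, whence (i)$\Rightarrow$(iii) yields the computably bounded DNC function. (Only your closing aside is slightly misphrased---Beros constructed a DNC function computing no effectively bi-immune set, rather than an effectively bi-immune set---but that remark is tangential and does not affect the proof of the corollary.)
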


\begin{corollary}[Beros]
There exists a DNC function computing no effectively bi-immune set.
\end{corollary}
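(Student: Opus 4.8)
The plan is to obtain this as an immediate contrapositive of the preceding corollary, using the known separation of \cite{Kjos.Lempp.ea:04}. First I would fix, by \cite{Kjos.Lempp.ea:04}, a DNC function $g$ that computes no computably bounded DNC function; producing such a $g$ is precisely the content of that reference, so nothing new is required at this step.

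Next I would argue by contradiction: suppose $g$ does compute some effectively bi-immune set $B$. Since $B$ is effectively bi-immune, both $B$ and its complement are effectively immune; in particular $B$ is effectively co-immune, so Lemma~\ref{lem:coimmune-bound} yields that $B$ is computably bounded, and together with effective immunity of $B$ this makes $B$ a computably bounded effectively immune set. By the implication (i) $\Rightarrow$ (iii) of Theorem~\ref{thm:bounded-equivalence} --- equivalently, directly by the corollary that every effectively bi-immune set computes a computably bounded DNC function --- the set $B$, and hence $g$, computes a computably bounded DNC function, contradicting the choice of $g$.

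Consequently $g$ is a DNC function computing no effectively bi-immune set, which is the assertion. I do not expect any genuine obstacle here: the argument is a purely formal combination of results already established, so the ``hard part'' has really been absorbed into the proof of Lemma~\ref{lem:coimmune-bound} and the separation theorem of \cite{Kjos.Lempp.ea:04}. The only point to check carefully is that the two halves of bi-immunity are used in the correct places --- effective co-immunity (i.e.\ effective immunity of the complement) to invoke Lemma~\ref{lem:coimmune-bound} for the computable bound on the principal function, and effective immunity of $B$ itself to feed into Theorem~\ref{thm:bounded-equivalence}.
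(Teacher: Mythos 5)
Your argument is correct and is exactly the route the paper intends: the Beros separation is obtained by combining the corollary that every effectively bi-immune set computes a computably bounded DNC function (via Lemma~\ref{lem:coimmune-bound} applied to the co-immune half and Theorem~\ref{thm:bounded-equivalence} (i)$\Rightarrow$(iii) applied to the immune half) with the DNC function from \cite{Kjos.Lempp.ea:04} that computes no computably bounded DNC function. Your closing remark about which half of bi-immunity feeds into which ingredient is the only subtlety, and you have it right.
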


\section{Up to $2 \cdot 2 \cdot 2^{\aleph_0}$ cardinal invariants,  and their counterparts in computability theory}

J\"org Brendle and Andr\'e Nies met in Kobe, Japan. They discussed $2\cdot 2$ families of cardinal invariants parameterised by reals. There are two families, each with a duals. The first family is defined in terms of a  bound $h$  on functions in 
${}^\omega\omega$. The second is defined by a real parameter as a bound on asymptotic density.  All characteristics have analogs in computability theory. 

Separation is unknown in many interesting cases, in both areas. One particular separation in computability would solve the Gamma question posed in  \cite{Andrews.etal:16}.

We don't claim originality for all the notions. A lot of them are at least implicit in  previous work.  Set theory: Goldstern and  Shelah; Kihara; Computability theory: Andrews et al.;  Monin and Nies.

\subsection{Background}

We follow Brendle et al.\ \cite{Brendle.Brooke.ea:14}, some of  which in turn relies on work of Rupprecht  \cite{Rupprecht:10} and his thesis \cite{Rupprecht:thesis}.

Let  $R \subseteq  X \times Y$ be a relation between spaces $X,Y$ (such as Baire space) satisfying $\forall x \; \exists y \;
(x R y)$ and $\forall y \; \exists x \; \neg (x R y)$. Let $S = \{  \langle y,x \rangle \in Y \times X \colon \neg xR y\}$.  

\begin{definition} \label{df: bd} We write

\[ \frd(R) = \min\{|G|:G\subseteq Y \land \, \forall x \in X \,
\exists y \in  G   \, xR y\}.\]

\[ \frb(R) = \frd (S) =  \min\{|F|:F\subseteq X \land \, \forall y\in Y
\exists x \in F   \, \neg  xR y\}.\]
\end{definition}

\subsection{The parameterised  families of relations} \label{ss:hbrelations}

We will study $\frd(R)$ and $\frb(R)$ for  two types of relations $R$.

\

\n {\it 1.}   Let $h \colon \omega \to \omega $  (usually unbounded).   Define  for $  x \in {}^\omega\omega$ and 

\n $  y \in \Pi_n \{0, \ldots, h(n)-1\}$,   

\[ x \neq^*_h  y \LR \fa^\infty n \,  [ x(n) \neq y(n)]. \]

\

\n {\it 2.} Let $ 0 \le p \le 1/2$. Define,   for $x,y \in {}^\omega 2$

\[ x \sim_p y \LR   \underline \rho (x \lra y)  >p, \]
where $x \lra y$ is  the set of $n$ such that $x(n) = y(n)$, and $\underline \rho$ denotes the lower density: $\ul \rho (z) = \liminf_n |z \cap n|/n$.

\subsection{The cardinal characteristics}
\n It will be helpful to  express Definition~\ref{df: bd} for these relations   in words.

\

\n $\frd(\neq^*_h)$ is the least size of a set $G$ of $h$-bounded functions so that for each function $x$ there is a function $y$ in   $G$  such that    $\fa^\infty n [ x(n) \neq y(n)]$.  (Of course it suffices to require this for $h$-bounded $x$.)

\

\n  $\frb(\neq^*_h)$ is the least size of a set   $F$ of  functions such that for each $h$-bounded function $y$, there is a function $x$ in $F$  such that $\ex^\infty n \, x(n) = y(n)$.   (Of course we can require that each function in $F$ is $h$-bounded.)

The characteristics  $\frb(\neq^*_h)$ have been  studied in \cite{Kamo.Osuga:14} within a more general framework; the notation there is $c^\ex_{h,1}$. See Thm.\ \ref{thm:Kamo_Osuga} below.

 \

 \n $\frd(\sim_p)$ is the least size of a set $G$ of bit sequences  so that for each sequence $x$ there is a sequence  $y$ in  $G$   so that $\ul \rho (x \lra y) >p$. 
 
\

\n   $\frb(\sim_p)$ is the least size of a set  $F$ of  bit sequences such that for each bit sequence $y$, there is a sequence $x$ in $F$  such that $\ul \rho (x \lra y) \le p$. 

\subsection{Basic facts about the  cardinal characteristics}
We first record the  obvious monotonicity properties.
\begin{fact} \

\bi \item[(i)]  $h \le^* g$ implies $ \frd(h) \ge \frd(g)$ and $\frb(h) \le \frb(g)$.  

\item[(ii)] $p \le q$ implies $ \frd(\sim_p) \le \frd(\sim_q)$ and $\frb(\sim_p) \ge \frb(\sim_q)$. \ei  \end{fact}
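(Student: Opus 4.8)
The plan is to verify each monotonicity statement directly from Definition~\ref{df: bd}, using the fact that enlarging the constraint relation $R$ can only shrink a dominating family and can only enlarge an unbounded family (and dually for $\frb$). So first I would record the elementary meta-fact: if $R, R' \subseteq X \times Y$ both satisfy the hypotheses of Definition~\ref{df: bd} and $R \subseteq R'$, then every $R$-dominating set $G$ is also $R'$-dominating, whence $\frd(R') \le \frd(R)$; and symmetrically, since the ``negated'' relations satisfy $S' \subseteq S$, every $R$-unbounded family stays $R'$-unbounded, giving $\frb(R) \le \frb(R')$. Both cases of the Fact then reduce to checking the relevant inclusions of relations.

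For (i): suppose $h \le^* g$, i.e.\ $h(n) \le g(n)$ for all but finitely many $n$. Here the two relations live on slightly different right-hand spaces ($\Pi_n\{0,\dots,h(n)-1\}$ versus $\Pi_n\{0,\dots,g(n)-1\}$), so rather than a literal inclusion I would argue as follows. Given a $\neq^*_g$-dominating family $G$ of $g$-bounded functions, replace each $y \in G$ by $\tilde y$ defined by $\tilde y(n) = \min(y(n), h(n)-1)$; this is $h$-bounded, and if $\fa^\infty n\,[x(n) \neq y(n)]$ then in particular for all large $n$ with $h(n) \le g(n)$ we still have... wait, that is not automatic, so instead I would use the standard trick: it suffices to dominate $h$-bounded $x$'s, and for such an $x$, from $x(n) \neq y(n)$ we cannot directly conclude $x(n) \neq \tilde y(n)$. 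The cleaner route is to note that a $\neq^*_g$-dominating family yields a $\neq^*_h$-dominating family of the same size by a finite modification argument combined with the observation that on the finite set where $h(n) > g(n)$ we can adjust freely; since $G$ is infinite (these cardinals are uncountable) this costs nothing. Dually, a $\neq^*_h$-unbounded family is $\neq^*_g$-unbounded because every $g$-bounded $y$ can be truncated to an $h$-bounded $\tilde y$ agreeing with $y$ cofinitely where $h \le g$, so $\ex^\infty n\, x(n) = y(n)$ transfers appropriately. This gives $\frd(h) \ge \frd(g)$ and $\frb(h) \le \frb(g)$ (writing $\frd(h)$ for $\frd(\neq^*_h)$ etc.).

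For (ii): suppose $p \le q$. Then directly from the definition $x \sim_p y \LR \ul\rho(x \lra y) > p$, we have $\ul\rho(x \lra y) > q \implies \ul\rho(x\lra y) > p$, i.e.\ ${\sim_q} \subseteq {\sim_p}$ as relations on ${}^\omega 2 \times {}^\omega 2$. Applying the meta-fact with $R = {\sim_q}$, $R' = {\sim_p}$ gives $\frd(\sim_p) \le \frd(\sim_q)$, and applying it to $\frb$ gives $\frb(\sim_q) \le \frb(\sim_p)$, i.e.\ $\frb(\sim_p) \ge \frb(\sim_q)$. I should also check the side conditions $\fa x\,\ex y\,(xRy)$ and $\fa y\,\ex x\,\neg(xRy)$ hold for all relevant $p$ — e.g.\ $x \sim_p x$ always, and for any $y$ its bitwise complement $\bar y$ has $\ul\rho(y \lra \bar y) = 0 \le p$ when... for $p > 0$; for $p = 0$ a slightly more careful choice of $x$ with $\ul\rho(x \lra y) = 0$ is needed, which is routine.

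The main obstacle is the mismatch of domains in part (i): the relations $\neq^*_h$ and $\neq^*_g$ are not literally comparable as subsets of a common product, so the slick ``$R \subseteq R'$'' argument does not apply verbatim and one must instead produce the family transformation (truncation to the smaller bound, exploiting that $h \le^* g$ and that the witnessing families are infinite) and check it preserves the dominating/unbounded property. Part (ii) is immediate once the relation inclusion is observed. I would present (ii) first as the clean model case and then handle (i) with the truncation argument.
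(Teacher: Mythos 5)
Your part (ii) is fine: the inclusion of relations ${\sim_q}\subseteq{\sim_p}$ together with your meta-fact gives exactly the stated inequalities (and the worry about $p=0$ is unnecessary, since $\neg(x\sim_p y)$ only requires $\ul\rho(x\lra y)\le p$, which the bitwise complement achieves with lower density $0$). Part (i), however, has both conversions pointing the wrong way, and in that direction the claims are not merely unproved but false. To get $\frd(\neq^*_h)\ge\frd(\neq^*_g)$ you must turn a witness family for $\frd(\neq^*_h)$, i.e.\ a family of $h$-bounded functions, into a family of $g$-bounded functions of the same size; you instead assert that ``a $\neq^*_g$-dominating family yields a $\neq^*_h$-dominating family'', which would prove $\frd(\neq^*_h)\le\frd(\neq^*_g)$, and this can consistently fail: for bounded $h$ one has $\frd(\neq^*_h)=\tp{\aleph_0}$ (item (i.a) of the Fact that follows this one), while $\frd(\neq^*_g)\le \non(\+ N)$ for every $g$, which can be smaller. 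Likewise ``a $\neq^*_h$-unbounded family is $\neq^*_g$-unbounded'' would give $\frb(\neq^*_g)\le\frb(\neq^*_h)$, the reverse of what is to be shown, and it is simply false: for $h\equiv 2$ the two constant functions $0,1$ form a $\neq^*_h$-unbounded family, but for unbounded $g$ they miss the function with constant value $2$ (off a finite set), so they are not $\neq^*_g$-unbounded. The truncation justification fails for the reason you yourself spotted in your first attempt: truncating a $g$-bounded $y$ below $h$ can change it at \emph{every} coordinate (take $y(n)\ge h(n)$ everywhere), so the truncated function need not agree with $y$ cofinitely, and agreement/disagreement with $x$ does not transfer. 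The remark that ``$G$ is infinite, so this costs nothing'' plays no role.

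The correct argument is the one whose key phrase you actually utter --- adjust freely on the finite set where $h(n)>g(n)$ --- but applied in the opposite direction. Since $h\le^* g$, every $h$-bounded $y$ agrees outside the finite set $\{n\colon h(n)>g(n)\}$ with a $g$-bounded function $\tilde y$ (keep $y(n)$ where $y(n)<g(n)$, put any value below $g(n)$ on the finitely many remaining $n$), and both conditions $\fa^\infty n\, [x(n)\neq y(n)]$ and $\ex^\infty n\, [x(n)= y(n)]$ are invariant under changing $y$ at finitely many places. Hence from a minimal witness $G$ for $\frd(\neq^*_h)$ the family $\{\tilde y\colon y\in G\}$ witnesses $\frd(\neq^*_g)\le |G|$, giving $\frd(\neq^*_h)\ge\frd(\neq^*_g)$; and any $F$ witnessing $\frb(\neq^*_g)$ already witnesses $\frb(\neq^*_h)$, since for an $h$-bounded $y$ some $x\in F$ agrees infinitely often with the $g$-bounded $\tilde y$ and hence with $y$, giving $\frb(\neq^*_h)\le\frb(\neq^*_g)$. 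With the directions repaired in this way (and (ii) left as is), your proof becomes the intended easy argument.
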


The following facts are somewhat less obvious.
\begin{fact} \label{fact: bd} \

\bi \item[(i)] Let $h$ be bounded. Then  {\rm  (a)}   $ \frd(\neq^*_h) = \tp{\aleph_0}$ and  {\rm  (b)} $ \frb(\neq^*_h) = 2$.
\item[(ii)] {\rm  (a)} $ \frd(\sim_{0.5}) = \tp{\aleph_0}$ and {\rm (b)} $ \frb(\sim_{0.5}) = 2$.  \ei  \end{fact}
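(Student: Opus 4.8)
The plan is to prove the four equalities in turn; in each the upper bound is routine and the work is the lower bound. Write $=^{*}$ for eventual equality of sequences, and for $D\sub\NN$ put $\ul\rho(D)=\liminf_{n}|D\cap n|/n$; I will repeatedly use superadditivity of $\liminf$, so that $\ul\rho(D)+\ul\rho(E)\le\liminf_{n}(|D\cap n|+|E\cap n|)/n$, which is $1$ when $D,E$ partition $\NN$. For any of the relations $R$ above, $\frd(R)\le\tp{\aleph_0}$ (take $G=Y$) and $\frb(R)\ge 2$ (no $F=\{x\}$ works, since there is always a $y$ with $xRy$); so it remains to show $\frb\le 2$ in (i)(b) and (ii)(b), and $\frd\ge\tp{\aleph_0}$ in (i)(a) and (ii)(a). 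For (ii)(b) I would take $F=\{\ol 0,\ol 1\}$: given any $y$, the sets $\{n:y(n)=0\}$ and $\{n:y(n)=1\}$ partition $\NN$, so their lower densities sum to at most $1$ and one of them is $\le 1/2$, i.e.\ $\neg(x\sim_{0.5}y)$ for the corresponding $x\in F$.

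Both clauses of (i) go through the set $A=\{n:h(n)=2\}$, which must be infinite for the statement to be true — this is what "$h$ bounded" should provide here (the standing hypothesis on the relation already forces $h(n)\ge 2$ cofinitely, and the claim is really about $h(n)=2$ for infinitely many $n$, e.g.\ $h\equiv 2$). For (i)(b): pick $x_{0},x_{1}$ that agree on $A$ with $\ol 0,\ol 1$ respectively; for any $h$-bounded $y$ and each $n\in A$ we have $y(n)\in\{x_{0}(n),x_{1}(n)\}$, so $y$ agrees infinitely often with $x_{0}$ or with $x_{1}$, which is $\neg(x\neq^{*}_{h}y)$. For (i)(a): enumerate $A=\{a_{0}<a_{1}<\cdots\}$, and for $r\in\cantor$ let $x_{r}(a_{k})=r(k)$ and $x_{r}(n)=0$ for $n\notin A$, so that $r\mapsto x_{r}$ is injective. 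If an $h$-bounded $y$ satisfies $x_{r}\neq^{*}_{h}y$, then $y(a_{k})\ne r(k)$ for all large $k$, and as $h(a_{k})=2$ this gives $r(k)=1-y(a_{k})$ there; hence for each $y$ only countably many $r$ have $x_{r}\neq^{*}_{h}y$, so any $G$ witnessing $\frd(\neq^{*}_{h})$ has $|G|\cdot\aleph_0\ge\tp{\aleph_0}$; thus $|G|=\tp{\aleph_0}$.

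For (ii)(a) the key observation is: if a single $y$ has $\ul\rho(x\lra y)>1/2$ and $\ul\rho(x'\lra y)>1/2$, then $\ul\rho(x\lra x')>0$. Indeed $(x\lra y)\cap(x'\lra y)\sub(x\lra x')$, so $|(x\lra y)\cap n|+|(x'\lra y)\cap n|\le n+|(x\lra x')\cap n|$ for all $n$; dividing by $n$ and taking $\liminf$ (superadditivity on the left) yields $\ul\rho(x\lra y)+\ul\rho(x'\lra y)\le 1+\ul\rho(x\lra x')$, whose left side exceeds $1$. So it suffices to exhibit $\{x_{r}:r\in\cantor\}\sub\cantor$ of size $\tp{\aleph_0}$ with $\ul\rho(x_{r}\lra x_{r'})=0$ whenever $r\ne r'$: then each $y$ is $\sim_{0.5}$-related to at most one $x_{r}$, so any $G$ witnessing $\frd(\sim_{0.5})$ has size $\tp{\aleph_0}$. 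I would build the family by partitioning $\NN$ into consecutive blocks $I_{0},I_{1},\dots$ with $|I_{0}|+\cdots+|I_{j-1}|=o(|I_{j}|)$, fixing a surjection $\pi:\NN\to\NN$ all of whose fibres are infinite, and letting $x_{r}$ be constantly equal to $r(\pi(j))$ on $I_{j}$. This is injective; and if $r(k)\ne r'(k)$, then on each of the infinitely many blocks $I_{j}$ with $\pi(j)=k$ the sequences $x_{r},x_{r'}$ disagree everywhere, so for $n=\max I_{j}+1$ the agreement count up to $n$ is $\le|I_{0}|+\cdots+|I_{j-1}|=o(n)$ along those $j$, forcing $\ul\rho(x_{r}\lra x_{r'})=0$.

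The only step that needs a genuine idea rather than bookkeeping is this last construction for (ii)(a): the family must simultaneously be of full size, be "spread out" enough that no single $y$ can stay within agreement density $>1/2$ of two members, and be structured enough for the density estimate to be transparent — the device of making $x_{r}$ constant with value $r(\pi(j))$ on a fast-growing block $I_{j}$ is what reconciles these. The only other point of care is the meaning of "bounded $h$" in (i): the argument delivers (i)(a) and (i)(b) exactly when $h$ takes the value $2$ infinitely often, which I take to be the intended reading.
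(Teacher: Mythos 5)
Your treatment of (ii) is correct. For (ii.b) you use exactly the paper's witness $F=\{0^\infty,1^\infty\}$. For (ii.a) your route is genuinely different from the paper's: the paper fixes the blocks $I_n=[n!,(n+1)!)$, decodes each $y\in G$ by majority vote on the blocks, and shows that for every $x$ which is constant on each block any $y$ with $\ul\rho(x\lra y)>0.5$ decodes, up to $=^*$, to the sequence coded by $x$; hence the decoding map from $G$ meets every $=^*$-class and $|G|=\tp{\aleph_0}$. You instead exhibit a continuum-sized family with pairwise agreement of lower density $0$ and use the superadditivity estimate $\ul\rho(x\lra y)+\ul\rho(x'\lra y)\le 1+\ul\rho(x\lra x')$ to conclude that a single $y$ can serve at most one member. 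Both arguments are of the same ``constant on fast-growing blocks'' flavour; yours isolates a clean reusable inequality (and works verbatim for any $p\ge 1/2$), while the paper's version additionally shows that any witness family surjects, after majority decoding, onto $\cantor$ modulo finite. Your verification of $\frb\ge 2$ from the standing hypothesis on relations is also fine.

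For (i) the paper gives no proof at all (both items are marked ``missing''), so there is nothing to match, but there is one genuine issue with your reading of the statement. Your diagnosis is exactly right for (i.b): for bounded $h$ one in fact has $\frb(\neq^*_h)=\liminf_n h(n)$ (the constants $0,\dots,L-1$, $L=\liminf_n h(n)$, suffice; and given fewer than $L$ functions one can choose $y(n)<h(n)$ avoiding all of them for almost every $n$), so ``$=2$'' holds precisely when $h(n)=2$ infinitely often, and as printed (i.b) fails for, say, $h\equiv 3$. But (i.a) is true for \emph{every} bounded $h$ with $h(n)\ge 2$ a.e., and your argument does not reach that generality: when $\liminf_n h(n)\ge 3$, the set of $x$ eventually different from a fixed $y$ is no longer a single $=^*$-class but has size continuum, so the step ``each $y$ rules out only countably many $x_r$'' breaks down. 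The repair is a rainbow family in place of your coded family. Let $L=\liminf_n h(n)$ and $A=\{n:h(n)=L\}$, which is infinite; assign to each $n\in A$ an $L$-tuple of pairwise distinct binary strings of a common length, so that every such tuple gets assigned to infinitely many $n\in A$; for $r\in\cantor$ put $x_r(n)=i-1$ if $r$ extends the $i$-th string of the tuple assigned to $n$, and $x_r(n)=0$ otherwise (and off $A$). Any $L$ distinct members $x_{r_1},\dots,x_{r_L}$ then realize all values $0,\dots,L-1$ at infinitely many $n\in A$, where also every $h$-bounded $y$ has $y(n)<L$; so no single $y\in G$ is eventually different from $L$ many members, whence $\tp{\aleph_0}\le (L-1)\cdot|G|$ and $\frd(\neq^*_h)=\tp{\aleph_0}$. (For $L=2$ this collapses to your argument.) So keep your remark about (i.b), but the claim that (i.a) is ``really about'' $h$ taking the value $2$ infinitely often should be dropped in favour of the argument just sketched, which proves (i.a) as stated.
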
 
\begin{proof}

\n {\rm (i.a)}  missing

\n {\rm (i.b)} missing

\n {\rm (ii.a)} Suppose $G$ is as in the definition   above so that $|G|=  \frd(\sim_{0.5}) $. Define a map 
$ \Theta \colon G \to \+ P(\omega)$ by $ \Theta(y) (n) =1 $ iff at least half the bits of $y$ in the interval $I_n= [n!, (n+1)!)$ are $1$. Given  $x$ that is constant on each $I_n$, pick  $y$ such that $\ul \rho (x \lra y) >0.5$. Then $\Theta(y) =^* x$. This shows that $|G| =  \tp{\aleph_0}$.

\n {\it (ii.b)}  In the definition of $ \frb(\sim_{0.5})$, let $F = \{0^\infty, 1^ \infty \}$.   \end{proof}

 \subsection{Placement within the Cicho\'n Diagram}
 A version of  Cicho\'n's  Diagram is in  Figure~\ref{Fig:10-diagram}.
 
  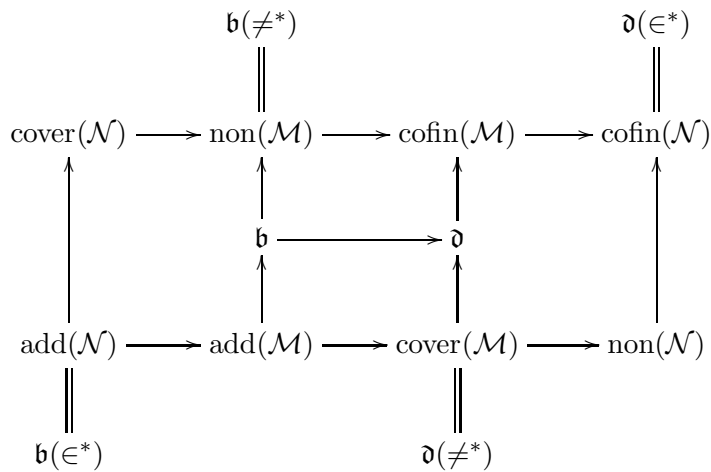
\begin{figure}[htbp]
\begin{equation*}
\xymatrix{
&\frb(\neq^*)\ar@{=}[d]&&\frd(\in^*)\ar@{=}[d]\\
\cov(\+ N)\ar[r]&\non(\+ M)\ar[r]&\cof(\+ M)\ar[r]&\cof(\+ N)\\
&\frb\ar[u]\ar[r]&\frd\ar[u]&\\
\add(\+ N)\ar[uu]\ar [r]&\add(\+ M)\ar[u]\ar[r]&\cov(\+ M)\ar[u]\ar[r]&
\non(\+ N)\ar[uu]\\
\frb(\in^*)\ar@{=}[u]&&\frd(\neq^*)\ar@{=}[u]
}
\end{equation*}
\caption{Cicho\'n's  diagram.}
\label{Fig:10-diagram}
\end{figure}
 We conjecture that one can insert the new characteristics in the right lower, and  left upper regions of the diagram.

\begin{proposition}   Let $h$ be a function. Let $0 < p < 1/2$. 

\bi \item[(i)]   $\cov (\+ M) \le \frd(\neq^*_h) \le \non( \+ N)$ and $\cov (\+ M) \le \frd(\sim_p) \le \non( \+ N)$. 
Dually, 

\item[(ii)]  $\cov (\+ N) \le \frb(\neq^*_h) \le \non( \+ M)$ and $\cov (\+ N) \le \frb(\sim_p) \le \non( \+ M)$. \ei
\end{proposition}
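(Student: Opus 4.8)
The plan is to establish each of the four inequalities by relating the parameterised relations $\neq^*_h$ and $\sim_p$ to classical relations whose norms are already known to coincide with the Cicho\'n entries. Concretely, for the lower bounds I would use the fact that $\cov(\+ M)$ is the least cardinality of a family of functions in ${}^\omega\omega$ that is ``eventually different--avoiding'' (i.e.\ $\cov(\+ M) = \frd(\neq^*)$ where $\neq^*$ is the unbounded eventual-difference relation, by the Bartoszy\'nski characterisation), and dually $\cov(\+ N) = \frb(\in^*)$ via the slalom characterisation of $\add(\+ N)$ and $\cof(\+ N)$. For the upper bounds I would use $\non(\+ N)$ as the least size of a non-null set and $\non(\+ M)$ as the least size of a non-meagre set, and exhibit suitable reductions (in the sense of Tukey/Galois--Tukey morphisms) from the parameterised relations into the Lebesgue-null or meager ideal.

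The key steps, in order: \textbf{(1)} For $\frd(\neq^*_h) \ge \cov(\+ M)$, observe that a dominating family for $\neq^*_h$ must in particular, after composing with a computable bijection $\prod_n\{0,\dots,h(n)-1\} \to {}^\omega\omega$ on the relevant coordinates, witness eventual difference against all $h$-bounded $x$; since for unbounded $h$ every real is coded by an $h$-bounded function up to a fixed computable translation, such a family yields a witness for $\frd(\neq^*) = \cov(\+ M)$. For $\frd(\sim_p)$ the same bound comes from the observation that a set $G$ with the $\sim_p$-covering property gives, for each $x$, some $y\in G$ agreeing with $x$ on a set of lower density $>p>0$, hence on an infinite set with positive density; coarsening $x$ and $y$ into blocks (as in the proof of Fact~\ref{fact: bd}(ii.a)) turns this into an eventual-agreement statement, again bounding $\cov(\+ M)$ from above by $|G|$. \textbf{(2)} For $\frd(\neq^*_h) \le \non(\+ N)$ and $\frd(\sim_p)\le\non(\+ N)$: take a non-null set $A$; I claim $A$ is (after the coding) a dominating family for $\neq^*_h$, because the set of $y\in\prod_n\{0,\dots,h(n)-1\}$ that fail to be eventually different from a fixed $x$ is null (a Borel--Cantelli computation: $\sum_n 1/h(n)$-type divergence is not needed because we only need that for each $x$ the ``bad'' set $\{y : \exists^\infty n\; x(n)=y(n)\}$ has measure zero once $h$ is unbounded enough — more carefully one replaces $h$ by a subsequence or uses that the relation is ``small'' in the product measure). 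Similarly $\{y : \ul\rho(x\lra y)\le p\}$ is Lebesgue-null for $p<1/2$ by the strong law of large numbers, so a non-null $A$ must contain, for every $x$, some $y$ with $x\sim_p y$. \textbf{(3)} The dual statements (ii) follow by the general duality $\frb(R) = \frd(S)$ together with the symmetry of the Cicho\'n diagram under the relevant dualisation ($\cov(\+ N)\leftrightarrow\non(\+ M)$ replacing $\cov(\+ M)\leftrightarrow\non(\+ N)$), applied to the ``transposed'' relations; concretely, a $\frb(\neq^*_h)$-family is an ``infinitely-often-equal'' family, which is exactly the combinatorial characterisation of $\non(\+ M)$ from above and $\cov(\+ N)$ from below.

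The main obstacle I expect is step~(2) in the case of $\frd(\neq^*_h)$ when $h$ grows slowly: for the set $\{y : \exists^\infty n\; x(n)=y(n)\}$ to be $\mu_h$-null one genuinely needs $\sum_n 1/h(n) < \infty$, which is false for, say, $h(n)=n$. So the clean ``non-null set is automatically dominating'' argument only works for fast-growing $h$, and for general $h$ one must instead either (a) pass through a Tukey reduction that regroups coordinates into blocks on which the effective failure probability becomes summable, or (b) invoke the Kamo--Osuga analysis (Theorem~\ref{thm:Kamo_Osuga} referenced in the text) which already locates $\frb(\neq^*_h)=c^\ex_{h,1}$ inside this part of the diagram uniformly in $h$. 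I would adopt the block-regrouping reduction: partition $\omega$ into consecutive intervals $I_k$ chosen so that $\prod_{n\in I_k}h(n)$ grows fast, and define a morphism sending $x$ to its ``profile'' on the $I_k$ and $y$ to the corresponding product; this reduces $\neq^*_h$ to $\neq^*_{\tilde h}$ for a fast-growing $\tilde h$, for which steps (1) and (2) go through cleanly, and the Cicho\'n bounds are preserved under the reduction. The $\sim_p$ cases, by contrast, are unproblematic because the law of large numbers already gives a genuine measure-zero ``bad set'' and a genuine comeager ``good set'' for every $p<1/2$.
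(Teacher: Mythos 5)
Your proposal attempts all eight inequalities, whereas the paper's own argument is explicitly a \emph{partial} proof: for $\neq^*_h$ it only establishes the two trivial monotonicity bounds $\cov(\+ M)=\frd(\neq^*)\le\frd(\neq^*_h)$ and $\frb(\neq^*_h)\le\frb(\neq^*)=\non(\+ M)$, and for $\sim_p$ only the two measure-side bounds via the law of large numbers; your versions of those four are essentially the same (though your coding detour for $\cov(\+ M)\le\frd(\neq^*_h)$ is unnecessary: a $\frd(\neq^*_h)$-witness works against every $x\in{}^\omega\omega$ by definition, or by truncating $x$ below $h$, so it is literally a $\frd(\neq^*)$-witness). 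The genuine gap is in the remaining measure-side inequalities for $\neq^*_h$. Your block-regrouping rescue fails: passing to $\tilde h(k)=\prod_{n\in I_k}h(n)$ turns coordinatewise eventual difference into the statement that $x$ and $y$ differ \emph{somewhere} in each late block, which is strictly weaker than $\fa^\infty n\, [x(n)\neq y(n)]$, so a witness obtained from a non-null set for $\neq^*_{\tilde h}$ does not pull back to a witness for $\neq^*_h$. Worse, no repair is possible in ZFC for general $h$: if $\sum_n 1/h(n)=\infty$ (e.g.\ $h(n)=n+1$), then for each fixed $x$ the set $\{y\colon \ex^\infty n\, y(n)=x(n)\}$ has measure one by the second Borel--Cantelli lemma, and in the random real model one checks that consequently $\frd(\neq^*_h)=\tp{\aleph_0}>\aleph_1=\non(\+ N)$ (any $\aleph_1$-sized candidate family lies in an intermediate model, and a later random element of $\prod_n h(n)$ is infinitely often equal to all of its members), while $\frb(\neq^*_h)=\aleph_1<\aleph_2=\cov(\+ N)$ (the old reals stay non-null, hence meet each measure-one set $\{x\colon \ex^\infty n\, x(n)=y(n)\}$). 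So $\frd(\neq^*_h)\le\non(\+ N)$ and $\cov(\+ N)\le\frb(\neq^*_h)$ genuinely need a growth hypothesis such as $\sum_n 1/h(n)<\infty$; this is precisely why the paper leaves them unproved.

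A second, smaller problem is your step (1) argument for $\cov(\+ M)\le\frd(\sim_p)$. For $p<1/2$, agreement of lower density $>p$ does not survive the majority-vote coarsening of Fact~\ref{fact: bd}(ii.a) (that trick needs density above $1/2$), and even if it did, a family with which every $x$ \emph{eventually agrees} would bound the $=^*$-covering number (which is $\tp{\aleph_0}$), while a family with which every $x$ agrees infinitely often bounds $\non(\+ M)$ -- neither gives $\cov(\+ M)$. The correct route is the category argument your closing sentence gestures at: for fixed $y$ and any $p>0$, the set $\{x\colon \ul \rho (x \lra y)>p\}$ is meager, since comeager many $x$ have arbitrarily long blocks of disagreement with $y$, forcing the lower density of agreement to $0$; hence a $\sim_p$-dominating family yields a cover of $\cantor$ by meager sets, giving $\cov(\+ M)\le\frd(\sim_p)$, and dually a non-meager set meets every comeager set $\{x\colon \ul \rho (x \lra y)\le p\}$, giving $\frb(\sim_p)\le\non(\+ M)$. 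With that fix your $\sim_p$ arguments are complete (and go beyond what the paper writes down), but the $\neq^*_h$ measure-side claims should be restricted to suitably fast-growing $h$ rather than ``repaired'' by regrouping.
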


\begin{proof}[Partial proof]  First we settle the case of characteristics involving $\neq^*_h$.  It is trivial that $\frd(\neq^*) \le \frd(\neq^*_h)$ and $ \frb(\neq^*_h) \le  \frb(\neq^*)$. Using the equalities in the diagram, this yields  two inequalities involving  $\neq^*_h$.

Next we consider the  case of characteristics involving $ \sim_p$. 

\

\n  $\frb(\sim_p)$: For each bit sequence $x$, for almost every $y$ we have $\rho (x \lra y) = 1/2$ by law of large numbers, so  the set $\{ y \colon \, \ul \rho(x \lra y) \le p$ is null. Hence any set $F$ as in the definition  of $\frb(\sim_p)$ yields a collection of null sets of size at most $|F|$ with union $\cantor$. Hence $\cov (\+ N) \le  \frb(\sim_p)$.  

\

\n  $\frd(\sim_p)$: Let $V$ be a non-null set. For each $x$, as said the set $\{ y \colon \, \ul \rho(x \lra y) >  p$ is co-null and hence contains an element $y \in V$. Therefore $\frd(\sim_p) \le \non(\+ N)$.

\end{proof} 

\subsection{Consistency of separation of uncountably many $\frb(\neq^*_h)$}

\begin{thm}[Kamo and Osuga \cite{Kamo.Osuga:14}, Thm.\ 1] \label{thm:Kamo_Osuga} Let $\delta$ be an ordinal and let $\seq{\lambda_\alpha}_{\alpha < \delta}$ be a strictly increasing sequence of regular cardinals. Let $\kappa\ge \delta$ be a cardinal such that $\kappa =   \kappa^{< \lambda_\alpha}$ for each $\alpha < \delta$. 

There is  a forcing notion $\mathbb P$ with the  c.c.c.\ that forces: there is a sequence of functions $\seq {h_\alpha}_{\alpha < \delta}$ such that $\frb(\neq^*_{h_\alpha}) = \lambda_\alpha$ for each $\alpha$, and $\kappa = \tp{\aleph_0}$.

Moreover, if $\delta \le  \frb$ then the sequence $\seq {h_\alpha}_{\alpha < \delta}$ can be chosen in the ground model.
\end{thm}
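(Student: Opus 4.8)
\noindent\emph{Proof plan.}\ \ The plan is to force with a finite support product of \emph{eventually different forcings}. For unbounded $h\in{}^\omega\omega$ let $\mathbb{D}_h$ be the poset of pairs $(s,F)$ with $s$ a finite partial function satisfying $s(n)<h(n)$ on its domain and $F$ a finite subset of ${}^\omega\omega$, ordered by $(s',F')\le(s,F)$ iff $s\subseteq s'$, $F\subseteq F'$, and $s'(n)\ne x(n)$ for all $x\in F$ and all $n\in\dom(s')\setminus\dom(s)$. Conditions sharing the first coordinate are compatible, so $\mathbb{D}_h$ is $\sigma$-centered and its generic $\dot e_h$ is an $h$-bounded function with $\dot e_h\neq^*_h x$ for every $x\in{}^\omega\omega\cap V$. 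I would let $\mathbb{P}$ be the finite support product made up of $\lambda_\alpha$ copies of $\mathbb{D}_{h_\alpha}$ for each $\alpha<\delta$, together with $\kappa$ copies of $\mathbb{D}_g$ for an auxiliary $g$ growing faster than all the $h_\alpha$. Being a finite support product of $\sigma$-centered posets, $\mathbb{P}$ is Knaster (hence c.c.c.) of size $\kappa$. I assume $\mathrm{GCH}$ in $V$ (arrangeable by a preliminary forcing); it is used only so that countable subproduct extensions stay small. For the ``moreover'' clause: if $\delta\le\frb$ one chooses, already in $V$, the $\le^*$-increasing sequence $\seq{h_\alpha}_{\alpha<\delta}$ — with each $h_\alpha$ growing sufficiently faster than all earlier ones — and $g$; otherwise one first forces with a $\sigma$-centered (hence c.c.c.) finite support iteration of Hechler forcing of length $\delta$ to create such a sequence, then runs the above product, the composite remaining c.c.c. of size $\kappa$ (a minor variant handles the boundary case $\delta=\frb$, where a single dominating $g$ may fail to exist).

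\smallskip
Two of the three verifications are routine. For the continuum: every real of $V[\mathbb{P}]$ has a nice name using countably many coordinates, so $\tp{\aleph_0}\le|\mathbb{P}|^{\aleph_0}=\kappa^{\aleph_0}\le\kappa^{<\lambda_0}=\kappa$, while the $\kappa$ generics of the $\mathbb{D}_g$-copies are pairwise distinct; hence $\tp{\aleph_0}=\kappa$. For the lower bound $\frb(\neq^*_{h_\alpha})\ge\lambda_\alpha$: given $F\subseteq{}^\omega\omega$ of size $<\lambda_\alpha$ in $V[\mathbb{P}]$, by c.c.c.\ each member has a name using only countably many coordinates, so — $\lambda_\alpha$ being regular and uncountable — there is a set $a$ of $<\lambda_\alpha$ coordinates with $F\subseteq V[\mathbb{P}\upharpoonright a]$. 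Choosing a $\mathbb{D}_{h_\alpha}$-coordinate $\xi<\lambda_\alpha$ with $\xi\notin a$ and factoring $\mathbb{P}\cong\mathbb{D}_{h_\alpha}\times\mathbb{P}'$, the generic $e_\xi$ is $\mathbb{D}_{h_\alpha}$-generic over a model containing $F$, so $e_\xi\neq^*_{h_\alpha}x$ for every $x\in F$; thus no member of $F$ is infinitely often equal to the $h_\alpha$-bounded function $e_\xi$, i.e.\ $F$ is not a witness for $\frb(\neq^*_{h_\alpha})$.

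\smallskip
For the upper bound $\frb(\neq^*_{h_\alpha})\le\lambda_\alpha$ I would factor $\mathbb{P}=\mathbb{P}_{\le\alpha}\times\mathbb{P}_{>\alpha}$, where $\mathbb{P}_{\le\alpha}$ is the product of the blocks $\mathbb{D}_{h_\beta}$ with $\beta\le\alpha$; as the $\lambda_\beta$ increase, $\mathbb{P}_{\le\alpha}$ has only $\lambda_\alpha$ coordinates, and $\tp{\aleph_0}=\lambda_\alpha$ holds in $V[\mathbb{P}_{\le\alpha}]$. In that model every $h_\alpha$-bounded real lies in some countable subproduct extension, each of which — a $\sigma$-centered extension of a model of $\mathrm{GCH}$ — contains a $\neq^*_{h_\alpha}$-catching family of size $\aleph_1$ (a set $\+F_0$ of functions such that every $h_\alpha$-bounded function is infinitely often equal to some member of $\+F_0$). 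Taking the union over the $\lambda_\alpha$ many countable subproducts produces a family $\+F$ of size $\lambda_\alpha$ catching every $h_\alpha$-bounded real of $V[\mathbb{P}_{\le\alpha}]$. It would then remain to show that $\+F$ still catches every $h_\alpha$-bounded real of $V[\mathbb{P}]=V[\mathbb{P}_{\le\alpha}][\mathbb{P}_{>\alpha}]$ — equivalently, that $\mathbb{P}_{>\alpha}$, a finite support product of forcings $\mathbb{D}_{h_\beta}$ with $\beta>\alpha$ together with $\mathbb{D}_g$, all built from functions growing much faster than $h_\alpha$, is \emph{good for} $\neq^*_{h_\alpha}$: it adds no $h_\alpha$-bounded function that is $\neq^*_{h_\alpha}$ from every member of $\+F$. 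Granting this, $\+F$ witnesses $\frb(\neq^*_{h_\alpha})\le\lambda_\alpha$, which with the lower bound gives $\frb(\neq^*_{h_\alpha})=\lambda_\alpha$.

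\smallskip
The crux — and the step I expect to be the main obstacle — is exactly this goodness statement. The plan would be to show first that a single $\mathbb{D}_{h'}$ with $h'$ growing fast enough relative to $h_\alpha$ is good for $\neq^*_{h_\alpha}$: a Goldstern--Shelah style combinatorial preservation argument, exploiting that $\mathbb{D}_{h'}$ is $\sigma$-centered (so that a name for an $h_\alpha$-bounded real takes, along each centered piece, only ``few'' values relative to the much larger $h'$) pitted against a catching family built with enough slack; and then that goodness passes to finite support products, the reflection step there using that a real of a product extension comes from a subproduct of size $<\lambda_\alpha$ and that $\kappa^{<\lambda_\alpha}=\kappa$ controls the bookkeeping. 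This argument is delicate: if $h'/h_\alpha$ stays bounded then the $\mathbb{D}_{h'}$-generic, reduced modulo $h_\alpha$, is already $\neq^*_{h_\alpha}$ from every ground-model $h_\alpha$-bounded function, so $\mathbb{D}_{h'}$ is \emph{not} good for $\neq^*_{h_\alpha}$. Hence one needs the $h_\alpha$ and $g$ with carefully controlled, widely separated growth rates — which is exactly what is available in the ground model when $\delta\le\frb$, explaining the ``moreover'' clause.
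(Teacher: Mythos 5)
A preliminary remark: the blog itself contains no proof of this statement --- it is quoted from Kamo and Osuga --- so your proposal has to be measured against their argument, which is a finite support \emph{iteration} of quantitatively linked forcings, not a product. The main gap is in your lower bound, and it is fatal to the product architecture. In a product, coordinate $\xi$ carries $\mathbb{D}_{h_\alpha}$ \emph{as computed in $V$}: side conditions are sets of ground-model reals. Genericity of $e_\xi$ over $V[\mathbb{P}\uhr{a}]$ therefore only yields $x\neq^*_{h_\alpha}e_\xi$ for $x\in V$; for $x\in V[\mathbb{P}\uhr{a}]\setminus V$ the dense sets you need (``put $x$ into the side condition'') are not subsets of the $V$-poset. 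Indeed the conclusion is false: for two coordinates $\xi\neq\xi'$ of the same block, the set of conditions forcing $e_\xi(n)=e_{\xi'}(n)$ for some $n\ge m$ is dense in the product (pick a fresh $n$ with $h_\alpha(n)$ larger than both side sets and give the two stems the same new value), so mutually generic block generics agree infinitely often. Worse, no finite support \emph{product} of ground-model posets can give the theorem: with uncountably many nontrivial coordinates one gets $\aleph_1$ many reals, each Cohen over the extension by the complementary coordinates (Cohen forcing, unlike $\mathbb{D}_h$, is absolute), and these form a non-meager set; hence the product forces $\non(\+ M)=\aleph_1$, while $\frb(\neq^*_{h_\alpha})\le\non(\+ M)$ (the blog's own Proposition), so every $\frb(\neq^*_{h_\alpha})$ would collapse to $\aleph_1$. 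This is exactly why Kamo--Osuga use a finite support iteration in which the block forcings are computed in the intermediate models, so each block generic evades \emph{all} reals added so far, with bookkeeping ensuring that every family of size $<\lambda_\alpha$ appears before cofinally many $\alpha$-block stages.

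Second, you correctly identify the upper-bound preservation (``goodness'') lemma as the crux, but you leave it entirely unproved, and your specific forcing is unlikely to satisfy it as stated: with arbitrary finite side sets, $\mathbb{D}_{h'}$ is merely $\sigma$-centered, and centredness alone gives no control over how many values of an $h_\alpha$-bounded name a given centered piece leaves possible, which is what the catching argument needs. Kamo--Osuga's proof hinges on forcings whose conditions carry explicit width bounds, making them linked in a graded, quantitative sense, together with a counting lemma that interlocks these widths with the separation of the $h_\alpha$'s and is preserved along the iteration; that lemma is the actual mathematical content of the theorem. So your proposal records the right general shape (blocks of size $\lambda_\alpha$, growth-separated bounding functions, nice-name reflection, $\kappa^{<\lambda_\alpha}=\kappa$ for the continuum), but both pillars --- the lower bound as written and the goodness lemma --- are respectively wrong and missing.
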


\subsection{The corresponding highness properties in computability theory}
We will re-obtain some properties that are at least close to some well known classes. Others are new.

As before,  we follow \cite{Brendle.Brooke.ea:14}. Again, let  $R \subseteq  X \times Y$ be a relation between spaces $X,Y$, and  let $S = \{  \langle y,x \rangle \in Y \times X \colon \neg xR y\}$.
Suppose we have specified what it means for     objects $x$ in $X$,    $y$ in  $Y$  to be computable in a Turing oracle $A$. We denote this by for example $x\leT A$. In particular, for $A= \emptyset$ we have a notion of computable objects.

Let the variable $x$ range over $X$, and let $y$ range over $Y$. We define the highness properties

\[ \+ B(R) =   \{ A:  \,\exists y \leT A \, \fa x \ \text{computable} \ [xRy]     \} \]

\[ \+ D(R) =  \+ B(S) =   \{ A:  \,\exists x \leT A \, \fa y \ \text{computable} \ [\neg xRy]   \}  \]

Let $h$ be computable, and let $p \in [0,1]$. Recall the relations $\neq^*_h$ and $\sim_p$ from Subsection~\ref{ss:hbrelations}.
\n Expressing  the definitions above in words,

\

\n $\+ D(\neq^*_h)$ is the class of oracles $A$ that compute a function $x$  such that   for each computable function $y \le h$,  we have  $\ex^\infty n [ x(n) = y(n)]$.  This is called ``$h$-infinitely often equal" in \cite{Monin.Nies:15}.

\

\n  $\+ B(\neq^*_h)$ is the class of oracles $A$ that compute a function $y \le h$  such that   for each computable function $x$, we have     $\fa^\infty n \, x(n) \neq y(n)$.

 \

 \n $\+ D(\sim_p)$  is the class of oracles $A$ that compute a set $x$  such that   for each computable  set $y$, we have $\ul \rho (x \lra y) \le p$.  We note that
 
 \bc $\Gamma(A) < p \RA A \in \+ D(\sim_p) \RA \Gamma(A) \le p$. \ec 
 The right arrow cannot obviously be reversed. It could be that  $\Gamma(A) \le p$ because $\gamma(x)$ gets arbitrarily close to $p$ from above, for sets $x \leT A$.  For $p=1/2$, the reverse arrow holds by Fact~\ref{fact:BD} below. (There is a related   open question at the end of the paper Andrews et al. \cite{Andrews.etal:2013}.) 
 
\

\n   $\+ B(\sim_p)$  is the class of oracles $A$ that compute a set $y$  such that   for each computable  set $x$, we have $\ul \rho (x \lra y) > p$.  This is some kind of dual $\Gamma$ class.

\subsection{Basic facts about the  highness properties}
Again we  note obvious monotonicity properties.
\begin{fact} \

\bi \item[(i)]  $h \le^* g$ implies $ \+ D(h) \supseteq \+ D(g)$ and $\+ B(h) \sub \+ B(g)$.  

\item[(ii)] $p \le q$ implies $ \+ D(\sim_p) \sub \+ D(\sim_q)$ and $\+ B(\sim_p) \supseteq \+ B(\sim_q)$. \ei  \end{fact}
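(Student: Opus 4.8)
The strategy is simply to unwind the definitions of $\+ B$ and $\+ D$ for the two families of relations; three of the four inclusions will turn out to be immediate, and the fourth needs only a finite modification of a witnessing function.

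For part (ii), recall that $\+ D(\sim_p)=\{A:\ex x\leT A\ \fa y\text{ computable }\, \ul\rho(x\lra y)\le p\}$ and $\+ B(\sim_p)=\{A:\ex y\leT A\ \fa x\text{ computable }\, \ul\rho(x\lra y)>p\}$. If $p\le q$, then any $x$ witnessing $A\in\+ D(\sim_p)$ also witnesses $A\in\+ D(\sim_q)$, since $\ul\rho(x\lra y)\le p$ implies $\ul\rho(x\lra y)\le q$; dually, any $y$ witnessing $A\in\+ B(\sim_q)$ witnesses $A\in\+ B(\sim_p)$, since $\ul\rho(x\lra y)>q$ implies $\ul\rho(x\lra y)>p$. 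So (ii) reduces to transitivity of $\le$ on $\RR$, with nothing further to do.

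For part (i), recall that $\+ D(\neq^*_h)$ is the class of $A$ computing some $x$ with $\ex^\infty n\,[x(n)=y(n)]$ for every computable $y$ with $y(n)<h(n)$ for all $n$, while $\+ B(\neq^*_h)$ is the class of $A$ computing some $y$ with $y(n)<h(n)$ for all $n$ and $\fa^\infty n\,[x(n)\neq y(n)]$ for every computable $x$. Fix $N$ with $h(n)\le g(n)$ for all $n\ge N$; as usual assume $g(n)\ge 1$ for all $n$ (automatic once $g$ is unbounded). First I would prove $\+ D(g)\sub\+ D(h)$: given $x\leT A$ witnessing $A\in\+ D(g)$ and any computable $y$ with $y<h$ coordinatewise, set $\tilde y(n)=y(n)$ for $n\ge N$ and $\tilde y(n)=0$ for $n<N$; then $\tilde y$ is computable and $\tilde y<g$ coordinatewise, so $\ex^\infty n\,[x(n)=\tilde y(n)]$, and since $\tilde y$ agrees with $y$ beyond $N$ this yields $\ex^\infty n\,[x(n)=y(n)]$. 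Hence the same $x$ witnesses $A\in\+ D(h)$. Next, for $\+ B(h)\sub\+ B(g)$: given $y\leT A$ witnessing $A\in\+ B(h)$, put $y'(n)=y(n)$ for $n\ge N$ and $y'(n)=0$ for $n<N$; then $y'\leT A$, $y'<g$ coordinatewise, and for every computable $x$ the relation $\fa^\infty n\,[x(n)\neq y(n)]$ transfers to $\fa^\infty n\,[x(n)\neq y'(n)]$ because $y'=y$ past $N$. So $y'$ witnesses $A\in\+ B(g)$.

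The only point requiring the slightest care — hardly an obstacle — is that membership in $\+ B(\neq^*_h)$, and the clause ``$y$ computable, $y<h$'' inside $\+ D(\neq^*_h)$, demand the bound to hold at \emph{every} coordinate, whereas $\le^*$ only gives eventual domination; this forces the finite recoordinatization above, and one wants $g\ge 1$ at the finitely many bad coordinates so that the value $0$ is legal there. Everything else is routine bookkeeping, and both modified functions remain computable from $A$.
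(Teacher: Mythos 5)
Your proof is correct and is exactly the routine definition-unwinding the paper has in mind: the Fact is stated there without proof as an ``obvious monotonicity property,'' and your finite recoordinatization is the right way to handle the only subtlety, namely that $\le^*$ gives only eventual domination while the bounding condition $y(n)<h(n)$ (resp.\ $<g(n)$) is required at every coordinate. One small quibble: unboundedness of $g$ does not by itself guarantee $g(n)\ge 1$ at every $n$; one simply assumes the bound is everywhere positive (otherwise the space of $g$-bounded functions is empty and the notions degenerate), which is implicit in the paper's framework.
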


We proceed to the analog of Fact~\ref{fact: bd}.
\begin{fact}  \label{fact:BD} \

\bi \item[(i)] Let $h$ be bounded. Then 

\n  {\rm  (a)}   $ \+ D(\neq^*_h) =$ non-computable  and  {\rm  (b)} $ \+ B(\neq^*_h) = \ES$
\item[(ii)] {\rm  (a)}   $ \+ D(\sim_{0.5}) \LR $ non-computable and {\rm (b)} $ \+ B(\sim_{0.5}) = \ES$.  \ei  \end{fact}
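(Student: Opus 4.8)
\emph{Clauses (i)(b) and (ii)(b).} These two ``$=\ES$'' clauses are soft: it suffices to defeat an arbitrary putative witness by a single \emph{computable} object. For (i)(b): any $y\le h$ has range inside the finite set $\{0,\dots,c-1\}$ (with $c$ a bound for $h$), so some value $v$ occurs infinitely often, and the computable constant $x\equiv v$ satisfies $\ex^\infty n\,[x(n)=y(n)]$, i.e.\ $\neg(x\neq^*_h y)$; hence no $y\leT A$ witnesses $A\in\+ B(\neq^*_h)$. For (ii)(b): given any $y$ I would use $0^\infty$ and $1^\infty$; a one-line computation gives $\ul\rho(0^\infty\lra y)=1-\ol\rho(y)$ and $\ul\rho(1^\infty\lra y)=\ul\rho(y)$, where $\ol\rho$ is the upper density, and since $\ul\rho(y)\le\ol\rho(y)$ one of these is $\le 1/2$ according as $\ol\rho(y)\ge 1/2$ or not. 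So $\+ B(\sim_{0.5})=\ES$.

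\emph{Clause (i)(a).} (I read ``$h$ bounded'' as in Fact~\ref{fact: bd}, so that $h$ is essentially $2$-valued.) ``$\sub$'' is immediate: if $A$ is computable then any $x\leT A$ is computable, and the computable $y\le h$ with $y(n)=1$ when $x(n)=0$ and $y(n)=0$ otherwise satisfies $x(n)\ne y(n)$ for all large $n$, so $x$ is not infinitely often equal to $y$ and $A\notin\+ D(\neq^*_h)$. For ``$\supseteq$'' I would simply take $x=A$, coded onto an infinite computable set of positions where $h=2$ (setting $x=0$ elsewhere). If some total computable $y\le h$ had $x(n)\ne y(n)$ for all $n\ge N$, then on those coding positions $x(n)=1-y(n)$, so $A$ would agree cofinitely with a computable function and hence be computable --- contradiction. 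So $x$ is infinitely often equal to every total computable $y\le h$, giving $A\in\+ D(\neq^*_h)$.

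\emph{Clause (ii)(a).} ``$\sub$'': if $x\leT A$ is computable, then $z=x$ is computable with $\ul\rho(x\lra z)=1>1/2$, so $x$ fails its defining condition and $A\notin\+ D(\sim_{0.5})$. For ``$\supseteq$'' I would first rewrite the target: since $\ul\rho(S)=1-\ol\rho(\NN\setminus S)$ for $S\sub\NN$ and $\NN\setminus(x\lra z)=x\lra\overline z$, the condition ``$\fa$ computable $z\;\ul\rho(x\lra z)\le 1/2$'' is equivalent to ``$\fa$ computable $w\;\ol\rho(x\lra w)\ge 1/2$''. Then I would fix a computable sequence $1=a_0<a_1<\cdots$ with $a_k/a_{k+1}\to 0$ (e.g.\ $a_k=2^{2^k}$), set $I_k=[a_k,a_{k+1})$, and let $x\leT A$ be constantly $A(k)$ on $I_k$. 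Given computable $w$, put $b_k=1$ if $w$ has at least $|I_k|/2$ ones on $I_k$, and $b_k=0$ otherwise; $(b_k)$ is computable. On any block with $A(k)=b_k$, the constant value $A(k)$ agrees with $w$ on at least $|I_k|/2$ positions of $I_k$; and $\{k:A(k)=b_k\}$ is infinite, for otherwise $A$ would be a finite modification of the computable sequence $1-b$. For each such block $|(x\lra w)\cap a_{k+1}|/a_{k+1}\ge\tfrac12(1-a_k/a_{k+1})\to\tfrac12$, so $\ol\rho(x\lra w)\ge 1/2$; as $w$ was arbitrary, $A\in\+ D(\sim_{0.5})$.

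The step I expect to be the real obstacle is ``$\supseteq$'' of (ii)(a). A non-computable oracle may agree with a computable sequence in density $1$ (a sparse non-computable set agrees with $0^\infty$ off its support), so $x=A$ itself is useless; the exponentially growing blocks are precisely what converts the weak hypothesis ``$A$ is not a finite modification of a computable sequence'' into a density statement, since each block eventually carries essentially all of the initial-segment count, so a single ``good'' block pushes the agreement density toward $1/2$. The remaining verifications are routine.
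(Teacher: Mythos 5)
Your treatment of (i)(b), (ii)(b) and (ii)(a) is correct and essentially the paper's own argument: constants up to the bound of $h$ for (i)(b), a computable sequence and its complement for (ii)(b), and for the hard direction of (ii)(a) the same device the paper uses, namely copying $A(k)$ across rapidly growing blocks and taking the majority vote of the computable opponent on each block (the paper uses $I_n=[n!,(n+1)!)$ and phrases it as ``else we could decide $A(n)$ for almost all $n$''; your reformulation via complements and $\ol\rho$ is a harmless repackaging). The paper's ``$\sub$'' of (ii)(a) is routed through $\Gamma(A)\le 0.5$, but your direct observation is the same content.

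The genuine gap is in (i)(a). ``Bounded'' does not mean that $h$ is essentially $2$-valued: it means $h(n)\le c$ for some constant $c$, and the interesting case is $c\ge 3$ (say $h\equiv 3$, so the computable opponents $y$ are $3$-valued). Your ``$\supseteq$'' argument rests on the implication ``$x(n)\neq y(n)$ for a.e.\ $n$ implies $x(n)=1-y(n)$'', which is exactly what fails once more than two values are available: avoiding $y$ almost everywhere only places $x(n)$ in a computable two-element set, and this does not yield computability of $A$ by any finite-modification argument. For $h\equiv 3$, the statement that every non-computable $A$ computes some $x$ infinitely often equal to every computable $3$-valued function is precisely the part the paper does not prove here; it flags it as nontrivial and cites Monin and Nies, Thm.\ IV.1 of \cite{Monin.Nies:15}. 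So your proof establishes only the special case $h\le 2$, not the stated fact. (Your ``$\sub$'' direction of (i)(a) is fine for any bounded $h\ge 2$, since the avoiding $y$ can always be chosen with values in $\{0,1\}$.)
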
 
\begin{proof}

\n {\rm (i.a)}  This is nontrivial: see Monin and Nies \cite[Thm.\ IV.1]{Monin.Nies:15}.

\n {\rm (i.b)}  Trivial: take constant functions $x$ with value up to the bound on $h$.

\n {\rm (ii.a)}   $A \in  \+ D(\sim_{0.5})$ implies $\Gamma(A) \le 0.5$, so $A$ is non-computable. 

Now suppose $A$ is non-computable, and let $x(k) = A(n)$ for each $k \in I_n$ defined as in the corresponding fact above. Then for each computable $y$ we have $\ul \rho (x \lra y) \le 0.5$, else we could decide $A(n)$ for almost all $n$ by looking at the majority of values of $y$ in $I_n$.

\n {(ii.b)}    Trivial again: take a computable $x$ and its  complement. 
  \end{proof}

 \subsection{Placement within the Cicho\'n Diagram}
The computability theoretic Cicho\'n Diagram is given in  Figure~\ref{Fig:computability_diagram}.

\begin{figure}[htbp]
\begin{equation*}
\xymatrix{
&  \text{high or d.n.c.} \lra \+ B(\neq^*)
\ar^{\parbox{1cm}{\scriptsize \text{Ref.~\cite{Rupprecht:thesis}}}}@{=}[d]
&&\parbox{2.5cm}{\center $\+ D(\in^*) \lra \  $not\\ computably traceable}\ar^{\text{Ref.~\cite{Terwijn.Zambella:01}}}@{=}[d]\\
\parbox{1.8cm}{\center $A\ge_Ta$  Schnorr random}\ar^{\text{Ref.~\cite{Nies.Stephan.ea:05}}}[r]&
\parbox{2cm}{\center{weakly meager engulfing}}\ar[r]&
\parbox{2cm}{\center not low for weak 1-gen  \\  (i.e. hyperimmune or d.n.c.\cite{Stephan.Yu:nd})}\ar^{\text{Ref.~\cite{Kjos.ea:2005}}}[r]&
\parbox{1.8cm}{\center not low for Schnorr tests}\\
&\text{high}\ar[u]\ar[r]&
\parbox{2.3cm}{\center hyperimmune degree}\ar[u(0.45)]&\\
\parbox{1.3cm}{\center Schnorr engulfing} \ar[uu(0.85)]\ar@{=}[r]
\ar^{\text{Ref.~\cite{Rupprecht:10}}}@{=}[ur]&
\parbox{1.3cm}{\center meager engulfing}\ar_{
\parbox{1cm}{\scriptsize \text{Ref.~\cite{Rupprecht:thesis}}}}@{=}[u]
\ar[r]&
\parbox{1.4cm}{\center weakly\\ 1-generic degree}\ar^{\text{Ref.~\cite{Kurtz:81}}}@{=}[u]\ar^{\text{Ref.~\cite{Rupprecht:10}}}[r]&
\parbox{1.3cm}{\center weakly Schnorr engulfing}\ar[uu(0.85)]\\
 \+ B(\in^*)
\ar^{\text{Ref.~\cite{Rupprecht:10}}}@{=}[u]&&
 \+ D(\neq^*)\ar@{=}[u]
}
\end{equation*}
\caption{The analog  of Cicho\'n's  diagram in computability.}
\label{Fig:computability_diagram}
\end{figure}
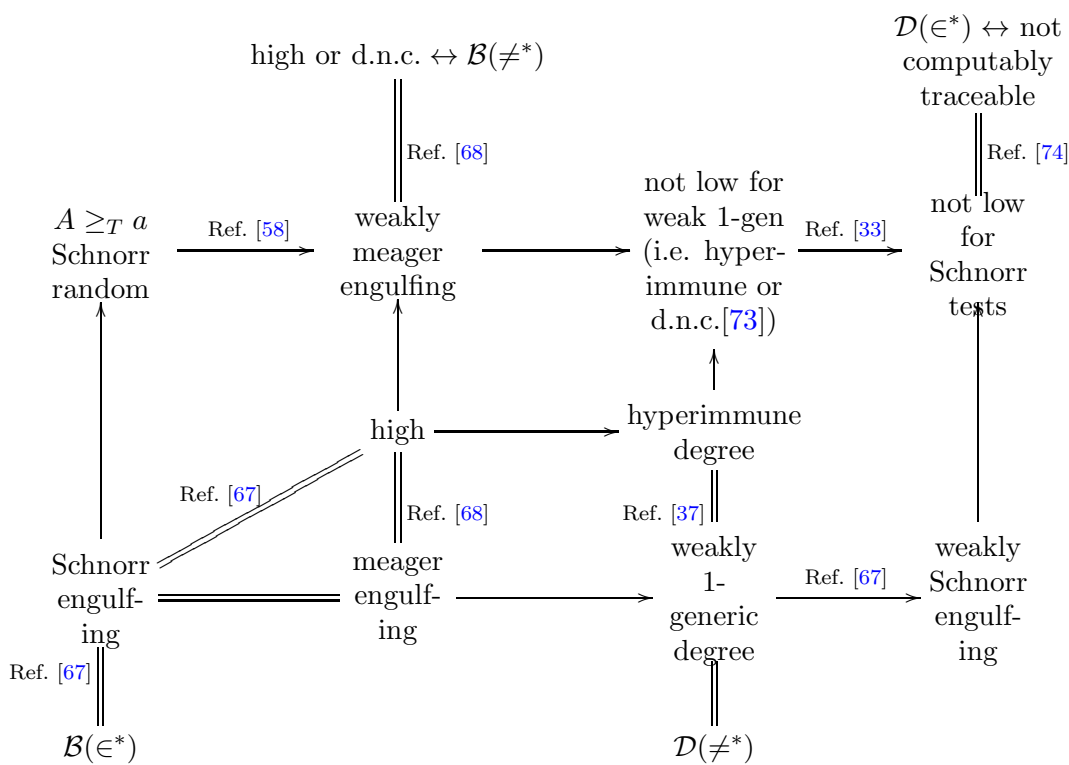

(ia) and (ib) below     is known; see  \cite{Monin.Nies:15}. We conjecture the analogs (iia) and (iib).
\begin{proposition}   Let $h$ be an order function. Let $0 < p < 1/2$ be computable.  Let $A \sub \omega$.

\bi \item[(ia)]   $A$  is of h.i.\ degree $\RA A \in  \+ D(\neq^*_h)  \RA  A$ is weakly Schnorr engulfing.

\item[(ib)]  $A$  is of h.i.\ degree $\RA A \in  \+ D(\sim_p)  \RA  A$ is weakly Schnorr engulfing.

\item[(iia)]   $A$ computes a Schnorr random $\RA A \in  \+ B(\neq^*_h)  \RA  A$ is high or d.n.c. 

\item [(iib)] $A$ computes a Schnorr random $\RA A \in  \+ B(\sim_p)  \RA  A$ is high or d.n.c. 
 \ei
\end{proposition}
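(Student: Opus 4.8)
Parts (ia) and (ib) are the theorems of Monin and Nies, so the plan for these is simply to quote \cite{Monin.Nies:15} (the ``infinitely often equal'' statement there and the density analogue proved by the same method). The content of the proposal is in (iia) and (iib), which I treat one implication at a time.

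\textbf{The right-hand implications.} For (iia) this is immediate: an $h$-bounded $y \leT A$ that is eventually different from every computable function is in particular an unbounded such function, so $A \in \+B(\neq^*)$, and by the result of Rupprecht recorded in Figure~\ref{Fig:computability_diagram} the class $\+B(\neq^*)$ is exactly the high-or-d.n.c.\ oracles. For (iib) the plan is to reduce to (iia). Given $y \leT A$ witnessing $A \in \+B(\sim_p)$ --- note that, applying the defining condition to a computable set $x$ and to its complement, one gets $p < \ul\rho(x \lra y) \le \ol\rho(x \lra y) < 1-p$ for every computable $x$ --- I would cut $\omega$ into consecutive blocks $I_0, I_1, \dots$ of slowly growing lengths and read off from $y \!\upharpoonright\! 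I_j$ an integer $\tilde y(j)$ in a range of size about $2^{|I_j|}$, so that $\tilde y$ is $h'$-bounded for an order function $h'$ computed from the block lengths. One then wants: if $\tilde y$ were infinitely often equal to a computable $x$, a computable set $w$ manufactured to copy the block value $x(j)$ on $I_j$ would agree with $y$ too much, or its complement too much, along a subsequence, contradicting the density squeeze above. Turning ``copies $x(j)$ on the blocks where $\tilde y(j) = x(j)$'' into an actual violation of the \emph{lower} density, rather than merely of the count of bits matched inside a single block, is the delicate point; I expect this to be the main obstacle on the $\sim_p$ side.

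\textbf{The left-hand implications (from a Schnorr random).} For (iib) this is easy: I would take the witness to be the Schnorr random $R \leT A$ itself. For computable $x$, bitwise XOR with $x$ is a computable measure-preserving automorphism of $\cantor$, so $x \oplus R$, and hence its complement (the characteristic sequence of $\{n : x(n) = R(n)\}$), is again Schnorr random, and therefore obeys the strong law of large numbers --- the failure set of the SLLN is a Schnorr null set, its stage-$n$ slices having computable measure bounded by an explicit Chernoff estimate going computably to $0$. Thus $\rho(x \lra R) = 1/2 > p$ for every computable $x$, so $R$ witnesses $A \in \+B(\sim_p)$. For (iia), the plan is to block-code $R$: fix a computable partition $\omega = \bigsqcup_n J_n$ with $|J_n| = \lfloor \log_2 h(n) \rfloor$ and let $y(n)$ be the integer in $\{0, \dots, h(n)-1\}$ coded by $R \!\upharpoonright\! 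J_n$. For a computable $x \le h$ the set $\{R' : \ex^\infty n\ y_{R'}(n) = x(n)\}$ is $\SI 1$ with measure at most $2\sum_n 1/h(n)$, so when this is a finite computable real it is, after the usual refinement, a Schnorr test; then $R$ passes it and $y$ is eventually different from $x$.

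\textbf{The main obstacle.} The previous paragraph only proves (iia) for order functions $h$ with $\sum_n 1/h(n) < \infty$; for slowly growing $h$ (say $h(n) = n$) the naive test has infinite measure, and indeed, by the second Borel--Cantelli lemma, \emph{almost every} $R$ has $y_R(n) = 0$ infinitely often, so a Schnorr-randomness argument routed through this coding is impossible. The natural remedy is to separate the two issues: first show that $\+B(\neq^*_h)$ does not depend on the order function $h$ --- so that $\+B(\neq^*_h) = \+B(\neq^*) = $ high-or-d.n.c.\ for \emph{every} order function $h$ --- and then show that every oracle computing a Schnorr random is of high-or-d.n.c.\ degree. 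The latter reduces to checking that a Schnorr random of hyperimmune-free degree is Martin-L\"of random (hence of d.n.c.\ degree), and that a Schnorr random of hyperimmune, non-high degree is still of d.n.c.\ degree. These two points --- the $h$-independence of $\+B(\neq^*_h)$ and the behaviour of Schnorr randoms of weak degree --- are where I expect the real difficulty to lie, and are the reason (iia) and (iib) are stated above as conjectures rather than as established facts.
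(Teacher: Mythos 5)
Your proposal is in fact aligned with the paper's own treatment, because the blog does not prove this proposition: parts (ia) and (ib) are simply quoted from Monin--Nies \cite{Monin.Nies:15}, and the text immediately preceding the statement says explicitly ``We conjecture the analogs (iia) and (iib)''. So your decision to cite \cite{Monin.Nies:15} for (i) and to leave the hard halves of (ii) open matches the status of the result in the source. What you add beyond the paper is correct and worthwhile: the right-hand implication of (iia) does follow from the monotonicity fact $\+ B(\neq^*_h) \sub \+ B(\neq^*)$ together with Rupprecht's identification of $\+ B(\neq^*)$ with the high-or-d.n.c.\ oracles, and your left-hand implication of (iib) (take the Schnorr random $R \leT A$ itself as witness; XOR with a computable sequence and complementation are computable measure-preserving maps, so the indicator of $x \lra R$ is Schnorr random and the law of large numbers gives $\ul \rho(x \lra R) = 1/2 > p$) is a sound argument. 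One correction to your closing assessment: of the two ``real difficulties'' you name, the second --- that every oracle computing a Schnorr random is high or d.n.c.\ --- is not open; it is exactly the composite of the arrow from ``$A \ge_T$ a Schnorr random'' in Figure~\ref{Fig:computability_diagram} (Nies--Stephan--Terwijn \cite{Nies.Stephan.ea:05}: a non-high Schnorr random is ML-random, hence of d.n.c.\ degree, and highness is upward closed) with Rupprecht's equality. The genuinely open content, and presumably the reason the paper states (iia) and (iib) only as conjectures, is whether the bounded classes $\+ B(\neq^*_h)$ and $\+ B(\sim_p)$ are large enough, i.e.\ the left implication of (iia) for slowly growing order functions $h$ (your Borel--Cantelli objection to the naive block coding is exactly right) and the right implication of (iib); note that on the set-theoretic side the Kamo--Osuga theorem shows the characteristics $\frb(\neq^*_h)$ can consistently differ for different $h$, so an $h$-independence claim for $\+ B(\neq^*_h)$ should not be taken for granted.
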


\subsection{Relating the two types of highness properties}

\begin{thm}[Monin and Nies \cite{Monin.Nies:15}, Thm III.4 restated] Let $h$ be sufficiently fast growing in that  $\fa n \,  h(n) \ge 2^{(d^n)}$, for some $d>1$. Then,  \bc 
$ A \in  \+ D (\neq^*_h )  \RA A \in  \+ D(\sim_p)$ for each $p>0$; equivalently, $\Gamma(A) =0$.  \ec \end{thm}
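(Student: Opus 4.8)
Assume $A\in\+D(\neq^*_h)$ and fix an infinitely-often-equal function $g\leT A$ witnessing this: for every computable $y$ with $\fa n\,y(n)<h(n)$ there are infinitely many $n$ with $g(n)=y(n)$. The plan is, for each rational $p$ with $0<p<1/2$, to build from $g$ a single set $x\leT A$ such that $\ul\rho(x\lra y)\le p$ for every computable set $y$; this gives $A\in\+D(\sim_p)$ for every $p>0$, which is the asserted implication, and $\Gamma(A)=0$ then follows since $A\in\+D(\sim_p)\RA\Gamma(A)\le p$ was noted above.

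Fix such a $p$. First I would partition $\NN$ into consecutive intervals $I_0,I_1,\dots$ with $|I_n|=\ell_n$, choosing the lengths so that (i) each block dwarfs its predecessors, $\ell_n\ge\tfrac2p\sum_{j<n}\ell_j$, and (ii) $h(n)\ge 2^{\ell_n(1-\mathrm{H}_2(p/2))}$, where $\mathrm{H}_2$ is the binary entropy function (so $\mathrm{H}_2(p/2)<1$). On each $I_n$ I would fix, computably, a covering code $\+C_n\sub\{0,1\}^{\ell_n}$ with $|\+C_n|\le h(n)$ such that every $\tau\in\{0,1\}^{\ell_n}$ agrees on at most $(p/2)\ell_n$ coordinates with some codeword of $\+C_n$; the sphere-covering bound, together with (ii), guarantees a code of the required size. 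Put $x\uhr{I_n}$ equal to the $g(n)$-th codeword of $\+C_n$, so $x\leT g\leT A$. Given a computable set $y$, let $y'(n)<|\+C_n|\le h(n)$ be a computably found index of a codeword of $\+C_n$ agreeing with $y\uhr{I_n}$ on at most $(p/2)\ell_n$ coordinates; then $y'$ is computable and $h$-bounded, so by the i.o.e.\ property $g(n)=y'(n)$ for infinitely many $n$, and for each such $n$ the sets $x$ and $y$ agree on at most $(p/2)\ell_n$ coordinates of $I_n$. Writing $e_n=\sum_{j\le n}\ell_j$, for such $n$ the number of agreements below $e_n$ is at most $e_{n-1}+(p/2)\ell_n\le(p/2)e_n+(p/2)e_n=p\,e_n$ by (i), so $\ul\rho(x\lra y)\le p$.

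The delicate point — and the one place where the \emph{doubly}-exponential form of the hypothesis is genuinely used — is the compatibility of (i) and (ii): dwarfing forces the $\ell_n$ to grow by a factor $\approx 2/p$ per step, while $h(n)\ge 2^{(d^n)}$ only licenses growth by a factor of order $d$, and when $2/p>d$ no single decomposition meets both for all $n$; so the argument as stated only settles $p\gtrsim 2/d$. I expect this to be the main obstacle. The way I would attack it is to let each value $g(n)$ govern not a single block but an overlapping window $I_{n-r_n},\dots,I_n$ of total length still $O(d^n)$, encoding in $g(n)$ a covering codeword for $y$ on the whole window and resolving clashes between overlapping windows by a fixed computable priority, the goal being that one i.o.e.\ hit at $n$ pins $x$ down on a long run of consecutive blocks, so that the density count above applies at the ends of such runs with only geometric growth of the $\ell_n$. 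Verifying that this clash-resolution is computable, that $x$ stays $g$-computable, and that it does not spoil the count of agreements is where the real work of the proof lies.
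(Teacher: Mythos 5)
Your first construction is sound as far as it goes, but it proves a weaker statement than the theorem: with one value of $g$ controlling one block of length $\ell_n\le\log_2 h(n)$, the checkpoint count gives lower density only down to roughly $1/d$, and as you yourself observe, your conditions (i) and (ii) are jointly satisfiable only when $p$ is at least of order $1/d$ (the covering-code refinement buys a constant factor over simply letting $g(n)$ predict the block exactly and complementing it, nothing more). The whole content of the theorem is the part you leave open: that the doubly exponential hypothesis already gives $A\in\mathcal{D}(\sim_p)$ for \emph{every} $p>0$, i.e.\ $\Gamma(A)=0$. (Note the blog itself contains no proof; it restates Theorem III.4 of Monin and Nies, so the comparison is with the cited paper, whose essential point is exactly this passage from the threshold $\approx 1/d$ down to arbitrary $p$.)

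The fix you sketch --- overlapping windows resolved by a fixed computable priority --- is not merely unverified; in the form described it cannot succeed. In any scheme where each bit $x(i)$ is ultimately dictated by a single value $g(n(i))$, with the assignment fixed independently of the particular computable $y$, the set $P_n=\{i\colon n(i)=n\}$ of positions at which a hit at $n$ certifies disagreement has size at most $\log_2 h(n)$ (a value below $h(n)$ predicts at most that many bits of $y$), and the $P_n$ are pairwise disjoint. Your verification only counts hit-controlled positions, and the hit set is, for that accounting, an arbitrary infinite set; so one needs, for all but finitely many $n$, a checkpoint $N_n$ with $|P_n\cap[0,N_n)|\ge(1-p)N_n$ up to an additive error. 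Disjointness then forces the $N_n$ to grow geometrically with ratio about $(1-p)/p$, hence $\log_2 h(n)\ge|P_n|\gtrsim((1-p)/p)^{\,n-O(1)}$, which contradicts $h(n)\approx 2^{(d^n)}$ as soon as $p$ is small compared with $1/d$ --- precisely the regime you need. So the missing step is not bookkeeping about clash resolution: a hit must somehow certify disagreement also at positions whose bits were committed using \emph{other} values of $g$ (for instance, only where several overlapping predictions coincide can one bit defeat all of them at once), or the construction must be reorganised in some other essential way. That further idea, which is where the doubly exponential bound is genuinely exploited in Monin--Nies, is absent from the proposal, so the proof of the stated theorem is not achieved.
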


\part{Reverse mathematics}

The following two sections have been written by Ludovic Patey in March 2015.

\section{Patey: Pseudo Ramsey's theorem for pairs}

Pseudo Ramsey's theorem for pairs has been introduced by Murakami, Yamazaki
and Yokoyama in~\cite{Murakami2014Ramseyan}. They proved that it is between the chain antichain principle
and the ascending descending sequence principle, and asked whether
it was equivalent to one of them. We answer positively.

\begin{definition}[Ascending descending sequence]
$\mathsf{ADS}$ is the statement ``Every linear order has an infinite ascending or descending sequence''.
\end{definition}

\begin{definition}[Pseudo Ramsey's theorem]
A coloring~$f : [\mathbb{N}]^2 \to 2$ is \emph{semi-transitive}
if whenever~$f(x, y) = 1$ and~$f(y, z) = 1$, then~$f(x, z) = 1$ for $x < y < z$.
A set~$H = \{x_0 < x_1 < \dots \}$ is \emph{pseudo-homogeneous} for a coloring~$f : [\mathbb{N}]^n \to k$
if~$f(x_i, \dots, x_{i+n-1}) = f(x_j, \dots, x_{j+n-1})$ for every~$i, j \in \mathbb{N}$.
$\mathsf{psRT}^n_k$ is the statement ``Every coloring~$f : [\mathbb{N}]^n \to k$ has an infinite pseudo-homogeneous set''.
\end{definition}

\begin{theorem}
$\mathsf{RCA}_0 \vdash \mathsf{psRT}^2_2 \leftrightarrow \mathsf{ADS}$
\end{theorem}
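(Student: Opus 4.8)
We must prove $\mathsf{RCA}_0 \vdash \mathsf{psRT}^2_2 \leftrightarrow \mathsf{ADS}$. I would prove the two implications separately.

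\emph{First, $\mathsf{ADS} \to \mathsf{psRT}^2_2$.} Given a coloring $f\colon [\mathbb{N}]^2 \to 2$, split into cases. Either there is an infinite set on which $f$ is constantly $1$-pseudo-homogeneous arising from a transitive-type substructure, or we can reduce to a linear order. The natural move is: if $f$ restricted to some tail still takes value $1$ "cofinally in a transitive way", build an ascending/descending sequence in the partial order $x \prec y \iff f(x,y)=1$; but this order need not be transitive. Instead, I would use the semi-transitivity-free setting directly: define, inside $\mathsf{RCA}_0$, an auxiliary linear order $L$ on $\mathbb{N}$ by $x <_L y \iff f(\min(x,y),\max(x,y)) = [x<y]$ — i.e. encode the color into an order — and show that an infinite ascending or descending sequence for $L$ yields an infinite $f$-pseudo-homogeneous set. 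One must check that consecutive pairs from such a sequence all get the same $f$-color; this is where one has to be slightly careful, possibly thinning the sequence by a further computable (hence $\mathsf{RCA}_0$-available, using $\mathsf{B}\Sigma^0_1$) step to make the colors of all consecutive pairs agree, since a priori only a "majority" or "eventually" behavior is guaranteed. This thinning is the first place to watch.

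\emph{Second, $\mathsf{psRT}^2_2 \to \mathsf{ADS}$.} Given a linear order $(\mathbb{N}, <_L)$, define $f\colon [\mathbb{N}]^2 \to 2$ by $f(x,y) = 1$ if $x <_L y$ and $f(x,y) = 0$ otherwise (for $x < y$ in the standard order). This $f$ is semi-transitive: if $x<_L y$ and $y <_L z$ then $x <_L z$ by transitivity of the linear order, so for $x<y<z$ we get $f(x,y)=f(y,z)=1 \Rightarrow f(x,z)=1$. Apply $\mathsf{psRT}^2_2$ (or really just $\mathsf{psRT}^2_2$ restricted to semi-transitive colorings, which is what Murakami–Yamazaki–Yokoyama show lies below $\mathsf{ADS}$ — but here we only need the full principle) to get an infinite pseudo-homogeneous set $H = \{x_0 < x_1 < \cdots\}$. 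Then all consecutive pairs have the same color: either $f(x_i,x_{i+1})=1$ for all $i$, giving $x_0 <_L x_1 <_L x_2 <_L \cdots$, an infinite ascending sequence; or $f(x_i,x_{i+1})=0$ for all $i$, giving $x_0 >_L x_1 >_L \cdots$, an infinite descending sequence. This direction is essentially immediate in $\mathsf{RCA}_0$: the coloring $f$ is $\Delta^0_1$ in the linear order, and extracting the sequence from $H$ is primitive recursive.

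\emph{Main obstacle.} The forward direction $\mathsf{ADS} \to \mathsf{psRT}^2_2$ is the real content, because $f$ is \emph{arbitrary}, not semi-transitive, so the induced relation $x \prec y \iff f(x,y)=1$ is not a linear (or even partial) order and one cannot directly feed it to $\mathsf{ADS}$. The fix I expect to work is a two-stage argument: first use a simple pigeonhole/stability analysis — available in $\mathsf{RCA}_0$ since we are only sorting pairs into two classes — to pass to a situation where, along the eventual sequence, the "local" color is determined, and then invoke $\mathsf{ADS}$ on a genuinely transitive order built from that stabilized data; alternatively, reduce $\mathsf{psRT}^2_2$ to $\mathsf{psRT}^2_2$ for semi-transitive colorings (the known lower bound result) plus the observation that the semi-transitive case is exactly $\mathsf{ADS}$ as in the second paragraph. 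I would check carefully that each thinning/stabilization step is justified by $\mathsf{RCA}_0$ (using $\mathsf{B}\Sigma^0_1$ where needed and no more), since the whole point is the precise reverse-math calibration. If a direct reduction proves awkward, the cleanest route is: show $\mathsf{psRT}^2_2 \leftrightarrow \mathsf{psRT}^2_2{\restriction}\{\text{semi-transitive}\}$ over $\mathsf{RCA}_0$ (the nontrivial direction combining a coloring-modification trick with $\mathsf{ADS}$ itself, which Murakami–Yamazaki–Yokoyama already placed below), and then identify the semi-transitive case with $\mathsf{ADS}$ via the correspondence above.
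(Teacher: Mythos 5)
Your direction $\mathsf{psRT}^2_2 \rightarrow \mathsf{ADS}$ is fine (and is the easy half: the paper simply cites Murakami--Yamazaki--Yokoyama for it). The gap is in the forward direction $\mathsf{ADS} \rightarrow \mathsf{psRT}^2_2$, which is the entire content of the theorem, and there your proposal does not contain a working argument. The relation $x <_L y \iff f(x,y)=1$ (for $x<y$) is only a tournament, not a linear order, so $\mathsf{ADS}$ cannot be applied to it, and no amount of thinning repairs a failure of transitivity; the suggested ``pigeonhole/stability analysis'' to make local colors stabilize is not available in $\mathsf{RCA}_0$ (stabilizing a coloring of pairs along a sequence is a $\Sigma^0_2$/cohesiveness phenomenon, not something you get from $\mathsf{B}\Sigma^0_1$); and the fallback ``reduce $\mathsf{psRT}^2_2$ to its semi-transitive restriction and identify that with $\mathsf{ADS}$'' names exactly the missing content without supplying either half. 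Moreover the identification you assert is false as stated: a semi-transitive coloring is not a linear order, since semi-transitivity only makes color $1$ transitive while color $0$ can fail transitivity, so the semi-transitive case does not reduce to $\mathsf{ADS}$ ``via the correspondence above'' without further work.

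What the paper actually does, and what your proposal is missing, is a chain-reachability construction carried out twice. Given arbitrary $f$, define $g(x,y)=1$ iff there is an increasing sequence $x=x_0<\dots<x_l=y$ with $f(x_i,x_{i+1})=1$ for all $i<l$; this is a bounded search, hence $\Delta^{0,f}_1$, and $g$ is semi-transitive because witnessing chains concatenate. Pseudo-homogeneity pulls back: a $g$-pseudo-homogeneous set with color $0$ is already $f$-pseudo-homogeneous with color $0$ (a direct $1$-edge between consecutive elements would witness $g=1$), and for color $1$ one takes the union of the witnessing chains, which is $f$-pseudo-homogeneous with color $1$ precisely because pseudo-homogeneity only constrains consecutive pairs, so the chains splice at the elements of $H$. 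Then one repeats the same construction on color $0$ of $g$ to get $h$, and uses semi-transitivity of $g$ to check that $h$ really is a linear order: if a $0$-chain witnesses $h(x,z)=0$ and $x<y<z$, either $y$ lies on the chain or it straddles $y$, and in the latter case semi-transitivity of $g$ forces $g(x_i,y)=0$ or $g(y,x_{i+1})=0$, giving $h(x,y)=0$ or $h(y,z)=0$. Applying $\mathsf{ADS}$ to $h$ and pulling back twice yields the $f$-pseudo-homogeneous set. Without this device (or an equivalent one) your forward direction does not go through.
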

\begin{proof}
The direction~$\mathsf{psRT}^2_2 \rightarrow \mathsf{ADS}$ is Theorem~24 in~\cite{Murakami2014Ramseyan}.
We prove that~$\mathsf{ADS} \rightarrow \mathsf{psRT}^2_2$. 
Let~$f : [\mathbb{N}]^2 \to 2$ be a coloring. The reduction is in two steps.
We first define a $\Delta^{0,f}_1$ semi-transitive coloring
$g : [\mathbb{N}]^2 \to 2$ such that every infinite set pseudo-homogeneous for~$g$
computes an infinite set pseudo-homogeneous for~$f$.
Then, we define a $\Delta^{0,g}_1$ linear order~$h : [\mathbb{N}]^2 \to 2$
such that every infinite set pseudo-homogeneous for~$h$
computes an infinite set pseudo-homogeneous for~$g$.
We conclude by applying~$\mathsf{ADS}$ over~$h$.

\emph{Step 1}:
Define the coloring~$g : [\mathbb{N}]^2 \to 2$ for every~$x < y$ by
$g(x, y) = 1$ if there exists a sequence~$x = x_0 < \dots < x_l = y$
such that $f(x_i, x_{i+1}) = 1$ for every~$i < l$, and~$g(x, y) = 0$ otherwise.
The function~$g$ is a semi-transitive coloring. Indeed,
suppose that~$g(x, y) = 1$ and~$g(y, z) = 1$, witnessed respectively
by the sequences~$x = x_0 < \dots < x_m = y$ and~$y = y_0 < \dots < y_n = z$.
The sequence~$x = x_0 < \dots < x_m = y_0 < \dots < y_n = z$ witnesses~$g(x, z) = 1$.
We claim that every infinite set~$H = \{x_0 < x_1 < \dots \}$ pseudo-homogeneous for~$g$ computes
an infinite set pseudo-homogeneous for~$f$. If $H$ is pseudo-homogeneous with color~0,
then $f(x_i, x_{i+1}) = 0$ for each~$i$, otherwise the sequence~$x_i < x_{i+1}$ would witness
$g(x_i, x_{i+1}) = 1$. Thus $H$ is pseudo-homogeneous for~$f$ with color~0. 
If $H$ is pseudo-homogeneous with color~1,
then define the set~$H_1 \supseteq H$ to be the set of integers in the sequences
witnessing~$g(x_i, x_{i+1}) = 1$ for each~$i$. The set~$H_1$ is~$\Delta^{0,f \oplus H}_1$
and pseudo-homogeneous for~$f$ with color~1.

\emph{Step 2}:
Define the coloring~$h : [\mathbb{N}]^2 \to 2$ for every~$x < y$ by
$h(x, y) = 0$ if there exists a sequence~$x = x_0 < \dots < x_l = y$
such that $g(x_i, x_{i+1}) = 0$ for every~$i < l$, and~$h(x, y) = 1$ otherwise.
For the same reasons as for~$g$, $h(x, z) = 0$ whenever~$h(x, y) = 0$ and~$h(y, z) = 0$ for~$x < y < z$.
We need to prove that if~$h(x, z) = 0$ then either $h(x, y) = 0$ or~$h(y, z) = 0$ for~$x < y < z$.
Let~$x = x_0 < \dots < x_l = z$ be a sequence witnessing $h(x,z) = 0$.
If~$y = x_i$ for some~$i < l$ then the sequence~$x = x_0 < \dots < x_i = y$ witnesses~$h(x,y) = 0$.
If~$y \neq x_i$ for every~$i < l$, then there exists some~$i < l$ such that~$x_i < y < x_{i+1}$. 
By semi-transitivity of~$g$, either $g(x_i, y) = 0$ or~$g(y, x_{i+1}) = 0$. In this case either~$x = x_0 < \dots < x_i < y$ witnesses~$h(x,y) = 0$ or~$y < x_{i+1} < \dots < x_l = z$ witnesses~$h(y,z) = 0$.
Therefore~$h$ is a linear order.
For the same reasons as for~$g$, every infinite set pseudo-homogeneous for~$h$
computes an infinite set pseudo-homogeneous for~$g$.
This last step finishes the proof.
\end{proof}

\section{Patey: Increasing polarized Ramsey's theorem for pairs}

The Ramsey-type weak K\"onig's lemma has been introduced by Flood in~\cite{Flood2012Reverse}
under the name $\mathsf{RKL}$, and later renamed $\mathsf{RWKL}$ by Bienvenu, Patey and Shafer.
Independently, the increasing polarized Ramsey's theorem has been introduced by Dzhafarov
and Hirst~\cite{Dzhafarov2009polarized} to find new principles between stable Ramsey's theorem for pairs
and Ramsey's theorem for pairs. We prove that the two principles are equivalent over $\mathsf{RCA}_0$.

\begin{definition}[Ramsey-type weak K\"onig's lemma]
Given an infinite set of strings $S \subseteq 2^{<\mathbb{N}}$,
let~$T_S$ denote the downward closure of $S$, that is,
$T_S = \{ \tau \in 2^{<\mathbb{N}} : (\exists \sigma \in S)[\tau \preceq \sigma] \}$.
A set $H \subseteq \mathbb{N}$ is \emph{homogeneous} for a $\sigma \in 2^{<\mathbb{N}}$ if $(\exists c < 2)(\forall i \in H)(i < |\sigma| \rightarrow \sigma(i)=c)$, and a set $H \subseteq \mathbb{N}$ is \emph{homogeneous} for an infinite tree $T \subseteq 2^{<\mathbb{N}}$ if the tree 
$\{\sigma \in T : \text{$H$ is homogeneous for $\sigma$}\}$ is infinite.
$2\mbox{-}\mathsf{RWKL}$ is the statement~``For every set of strings~$S$,
there is an infinite set which is homogeneous for~$T_S$''.
\end{definition}

\begin{definition}[Increasing polarized Ramsey's theorem]
A set~\emph{increasing p-homogeneous} for $f : [\mathbb{N}]^n \to k$
is a sequence~$\langle H_1, \dots, H_n \rangle$ of infinite sets such that
for some color~$c < k$, $f(x_1, \dots, x_n) = c$ for every increasing
tuple $\langle x_1, \dots, x_n \rangle \in H_1 \times \dots \times H_n$.
$\mathsf{IPT}^n_k$ is the statement~``Every coloring~$f : [\mathbb{N}]^n \to k$
has an infinite increasing p-homogeneous set''.
\end{definition}

\begin{theorem}
$\mathsf{RCA}_0 \vdash \mathsf{IPT}^2_2 \leftrightarrow 2\mbox{-}\mathsf{RWKL}$
\end{theorem}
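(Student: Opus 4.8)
The plan is to prove the two implications separately, in each case turning an instance of one principle into an instance of the other by a computable (indeed primitive recursive) transformation, as is standard for reductions below $\mathsf{RT}^2_2$.

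For $\mathsf{IPT}^2_2 \to 2\mbox{-}\mathsf{RWKL}$ I would argue as follows. Let $S \subseteq 2^{<\mathbb{N}}$ be infinite, so that $T_S$ is infinite and for each $j$ one can computably find a string $\sigma^{(j)} \in S$ with $|\sigma^{(j)}| \geq j$ (depending only on $j$). Define $f : [\mathbb{N}]^2 \to 2$ by $f(i,j) = \sigma^{(j)}(i)$ for $i < j$, and apply $\mathsf{IPT}^2_2$ to obtain infinite sets $H_1, H_2$ and a color $c$ with $f(x_1,x_2) = c$ whenever $x_1 \in H_1$, $x_2 \in H_2$, $x_1 < x_2$. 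Then $H_1$ is homogeneous for $T_S$: for each $j \in H_2$ the string $\tau_j := \sigma^{(j)} \restriction j$ lies in $T_S$ (it is a prefix of $\sigma^{(j)} \in S$), and $\tau_j(i) = f(i,j) = c$ for every $i \in H_1$ with $i < j$, so $H_1$ is homogeneous for $\tau_j$ with color $c$; as $H_2$ is infinite these strings have unbounded length, witnessing that $\{\sigma \in T_S : H_1 \text{ is homogeneous for } \sigma\}$ is infinite. This direction is routine and formalizes directly in $\mathsf{RCA}_0$.

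The substantive direction is $2\mbox{-}\mathsf{RWKL} \to \mathsf{IPT}^2_2$. Given $f : [\mathbb{N}]^2 \to 2$, let $\sigma_n \in 2^n$ be the $n$th column, $\sigma_n(i) = f(i,n)$, and set $S = \{\sigma_n : n \in \mathbb{N}\}$; the $\sigma_n$ have distinct lengths, so $S$ is an infinite set of strings and $T_S$ is infinite. Apply $2\mbox{-}\mathsf{RWKL}$ to get an infinite $H$ such that $\{\sigma \in T_S : H \text{ is homogeneous for } \sigma\}$ is infinite. This set equals $T_0 \cup T_1$, where $T_c = \{\tau \in T_S : \tau(i) = c \text{ for all } i \in H \text{ with } i < |\tau|\}$, so I fix a color $c$ with $T_c$ infinite. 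The naive attempt — take a long $\tau \in T_c$, say $\tau \preceq \sigma_n$, and set $H_1 = H$, $H_2 = \{\text{such } n\}$ — fails, since $\tau$ controls only the first $|\tau|$ bits of $\sigma_n$ while $H$ may contain elements in $[|\tau|,n)$. I would fix this by building $H_1$ and $H_2$ by a simultaneous recursion that forces each witness string to overtake the relevant markers: put $h_0 = \min H$; given $h_k$ and $n_{k-1}$ (with $n_{-1} = 0$), choose $\tau^k \in T_c$ with $|\tau^k| > h_k$ (possible since $T_c$ is infinite), then $n_k \geq |\tau^k|$ with $\tau^k \preceq \sigma_{n_k}$ (possible since $\tau^k \in T_S$), then let $h_{k+1}$ be the least element of $H$ above $n_k$. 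Set $H_1 = \{h_k : k \in \mathbb{N}\}$ and $H_2 = \{n_k : k \in \mathbb{N}\}$. A short computation gives $h_k < |\tau^j| \leq n_j$ for $j \geq k$ and $h_k > n_j$ for $j < k$; hence if $h_k < n_j$ then $j \geq k$, so $h_k \in H$ with $h_k < |\tau^j|$, and therefore $f(h_k,n_j) = \sigma_{n_j}(h_k) = \tau^j(h_k) = c$. Thus $\langle H_1, H_2\rangle$ is increasing p-homogeneous for $f$ with color $c$, and both sets are infinite since the $h_k$ and the $n_j$ are strictly increasing.

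The main obstacle is exactly the mismatch just described: $\mathsf{RWKL}$-homogeneity gives control only along initial segments of the columns of $f$, whereas an increasing p-homogeneous pair requires control on entire columns; the adaptive choice of $\tau^k$ — long enough, depending on $H$, to absorb the next element of $H$ and to leapfrog the previous witness $n_{k-1}$ — is what bridges this gap. Everything else is bookkeeping: the searches in the recursion provably terminate from "$H$ infinite" and "$T_c$ infinite", so the sequence $\langle h_k, \tau^k, n_k \rangle$ and hence $H_1, H_2$ are $\Delta^0_1$ in $H \oplus f$, and all verifications are elementary, so the argument goes through in $\mathsf{RCA}_0$. I would also note the standard convention that instances of $2\mbox{-}\mathsf{RWKL}$ are taken to be infinite sets of strings — which our $S$ is — since for finite $S$ the conclusion is vacuously false.
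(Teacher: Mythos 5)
Your proposal is correct and follows essentially the same route as the paper's proof: the first direction codes the strings of $S$ as columns of the coloring $f$ (your use of prefixes $\sigma^{(j)}\restriction j$ just handles the normalization $|\sigma_i|=i$ that the paper assumes), and the second direction is the paper's staged construction, interleaving elements of $H$ with column indices $n_k$ whose columns extend homogeneous strings long enough to cover all previously chosen elements of $H_1$. Your explicit fixing of a single color $c$ with $T_c$ infinite and the remark on $\Delta^0_1$ definability of the recursion are welcome extra care, but the underlying argument is the same.
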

\begin{proof}
$\mathsf{IPT}^2_2 \rightarrow 2\mbox{-}\mathsf{RWKL}$:
Let~$S = \{\sigma_0, \sigma_1, \dots \}$ be an infinite set of strings
such that~$|\sigma_i| = i$ for each~$i$.
Define the coloring~$f : [\mathbb{N}]^2 \to 2$ for each~$x < y$ by $f(x, y) = \sigma_y(x)$.
By~$\mathsf{IPT}^2_2$, let~$\langle H_1, H_2 \rangle$ be an infinite set increasing p-homogeneous for~$f$ with some color~$c$.
We claim that~$H_1$ is homogeneous for~$T_S$ with color~$c$. We will
prove that the set $I = \{\sigma \in T_S : \text{$H_1$ is homogeneous for $\sigma$}\}$ is infinite.
For each~$y \in \mathbb{N}$, let~$\tau_y$ be the string of length~$y$ defined by
$\tau_y(x) = f(x,y)$ for each~$x < y$. By definition of~$f$, $\tau_y \in S$ for each~$y \in \mathbb{N}$.
By definition of~$\langle H_1, H_2 \rangle$, $\tau_y(x) = c$ for each~$x \in H_1$ and~$y \in H_2$. 
Therefore, $H_1$ is homogeneous for~$\tau_y$ with color~$c$ for each~$y \in H_2$.
As $\{\tau_y : y \in H_2 \} \subseteq I$, the set~$I$ is infinite and therefore~$H_1$
is homogeneous for~$T_S$ with color~$c$.

$2\mbox{-}\mathsf{RWKL} \rightarrow \mathsf{IPT}^2_2$:
Let~$f : [\mathbb{N}]^2 \to 2$ be a coloring.
For each~$y$, let~$\sigma_y$ be the string of length~$y$
such that~$\sigma_y(x) = f(x, y)$ for each~$x < y$,
and let~$S = \{ \sigma_i : i \in \mathbb{N} \}$.
By~$2\mbox{-}\mathsf{RWKL}$, let~$H$ be an infinite set homogeneous for~$T_S$ with some color~$c$.
Define~$\langle H_1, H_2 \rangle$ by stages as follows.
At stage~0, $H_{1,0} = H_{2,0} = \emptyset$.
Suppose that at stage~$s$, $|H_{1,s}| = |H_{2,s}| = s$, $H_{1,s} \subseteq H$
and $\langle H_{1,s}, H_{2,s} \rangle$ is a finite set increasing p-homogeneous for~$f$ with color~$c$.
Take some~$x \in H$ such that~$x > max(H_{1,s}, H_{2,s})$ and set~$H_{1,s+1} = H_{1,s} \cup \{x\}$.
By definition of~$H$, there exists a string $\tau \prec \sigma_y$ for some~$y > x$,
such that $|\tau| > x$ and~$H$ is homogeneous for $\tau$ with color~$c$. Set~$H_{2,s+1} = H_{2,s} \cup \{y\}$.
We now check that the finite set $\langle H_{1,s+1}, H_{2,s+1} \rangle$ is aincreasing p-homogeneous for~$f$ with color~$c$.
By induction hypothesis, we need only to check that $f(z, y) = c$ for every~$z \in H_{1,s+1}$. 
By definition of homogeneity
and as~$H_{1,s+1} \subset H$, $\sigma_y(z) = c$ for every~$y \in H_{1,s+1}$. By definition of~$\sigma_y$,
$f(z, y) = c$ for every~$z \in H_{1,s+1}$. 
This finishes the proof.
\end{proof}

 \part{Higher Randomness}
\section{Yu and Zhu: On $NCR_L$}

This is a joint work of Liang Yu (Heidelberg) and  Yizheng Zhu (M\"unster).

In Logic Blog 2014, Prop.\ 9.4, it was proved that $NCR_L$ is a $\Pi^1_3$-countable set.

We assume $PD$ throughout this section.

\begin{proposition}
If $A$ is a $\Pi^1_2$-countable set, then  $A\subseteq NCR_L$.
\end{proposition}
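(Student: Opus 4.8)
The plan is to combine the Logic Blog 2014 result (Prop.\ 9.4) that $NCR_L$ is $\Pi^1_3$-countable with the theory of largest countable projective sets (Q-theory), available under $PD$, together with a Reimann--Slaman style basis argument for randomness with respect to continuous measures, lifted one jump up the projective hierarchy.

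First I would reduce the statement. Under $PD$ there is a \emph{largest} $\Pi^1_3$-countable set, say $\mathcal{Q}_3$ (such a maximum exists at odd levels of the hierarchy, by Q-theory). Since $\Pi^1_2\subseteq\Pi^1_3$, any $\Pi^1_2$-countable set $A$ is in particular $\Pi^1_3$-countable, so $A\subseteq\mathcal{Q}_3$; and by Prop.\ 9.4, $NCR_L\subseteq\mathcal{Q}_3$ as well. Hence it suffices to prove the reverse inclusion $\mathcal{Q}_3\subseteq NCR_L$ (which would moreover yield $NCR_L=\mathcal{Q}_3$): that every $x$ lying in a countable $\Pi^1_2$ (hence $\Pi^1_3$) set fails to be $L$-random for every continuous, atomless probability measure. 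I would prove this by contradiction. Assume $x$ belongs to such a countable set and is $L$-random for a continuous measure $\mu$. The key input is a \emph{higher basis theorem for continuous-measure randomness}: if $x$ is $L$-random for some continuous measure, then it is $L$-random for one that is simple relative to $x$ (for instance $\Delta^1_1$ in $x$, or presented by a real recursive in $x$ together with the appropriate higher jump). Replacing $\mu$ by such an $x$-simple measure, $x$ becomes ``$L$-generic'' over the canonical inner-model structure witnessing the $\Pi^1_2$ definition of the countable set containing it; equivalently, $x$ escapes $L_\gamma[z]$ for the $\Pi^1_3$-bounded ordinal $\gamma$ and the real $z<_L x$ coding that structure. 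A rank/reflection argument on the relevant $\Pi^1_2$-norm then shows that such an $x$ cannot in fact be a member of any countable $\Pi^1_2$ set --- a contradiction.

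The main obstacle is precisely this last step, and inside it the higher basis theorem: one must transport the measure-theoretic basis constructions of Reimann and Slaman into the higher-randomness setting ($\Pi^1_1/\Delta^1_1$ Martin-L\"of tests, higher Gandy--Harrington or Sacks forcing, $\Pi^1_1$-uniformization), and make sure the witnessing measure is simple enough that the subsequent ranking argument against membership in a countable $\Pi^1_2$ set actually closes; granting that, the ranking argument itself is a routine adaptation of the classical countability proof, with $\omega_1^{CK}$ replaced by the relevant higher ordinal. I would also confirm that ``$\Pi^1_n$-countable'' is used here exactly as in Logic Blog 2014, so that ``$\Pi^1_2$-countable $\subseteq$ $\Pi^1_3$-countable'' holds literally, and that $PD$ is indeed the hypothesis under which the pertinent instances of Q-theory and $\Pi^1_2$-reflection are available.
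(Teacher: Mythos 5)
Your reduction step is where the argument breaks. You propose to deduce the proposition from the stronger claim that the largest countable $\Pi^1_3$ set is contained in $NCR_L$ (``it suffices to prove $\mathcal{Q}_3\subseteq NCR_L$''). But that stronger claim is false, and indeed it is refuted by the results of this very section: the section proves that $NCR_L$ is a \emph{proper} subset of $Q_3$, the witnesses being reals $y$ with $y\not\leq_{Q_3}z_0$ (in particular the $Q_3$-complete real $y_{0,3}$), which by the Posner--Robinson/Woodin lemma \emph{are} $L$-random with respect to a continuous measure; since $Q_3$ is itself a countable $\Pi^1_3$ set, these reals lie inside the largest countable $\Pi^1_3$ set but outside $NCR_L$. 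So the detour through Q-theory buys nothing and cannot be closed. Moreover, the argument you sketch for the remaining contradiction rests on two unproven placeholders --- a ``higher basis theorem for continuous-measure randomness'' in the style of Reimann--Slaman, and an unspecified ``rank/reflection argument'' --- and you yourself identify these as the main obstacles; as written this is a research plan, not a proof, and it is not clear that either ingredient is available or even needed.

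The paper's proof is far more elementary and takes a different route entirely: no Q-theory, no basis theorem, no improvement of the measure. Suppose $\varphi$ is $\Pi^1_2$, $A=\{y:\varphi(y)\}$ is countable, $x\in A$, and $x$ is $L$-random with respect to a continuous measure $\rho$. By Shoenfield absoluteness $L[\rho,x]\models\varphi(x)$. Randomness of $x$ over $L[\rho]$ means $x$ is generic for random forcing over $L[\rho]$, so some positive-measure condition $p$ satisfies $p\Vdash\varphi(\dot{x})$. Then \emph{every} $L[\rho]$-random $y\in p$ gives $L[\rho,y]\models\varphi(y)$, hence $\varphi(y)$ holds in $V$ by Shoenfield again; since (under the section's blanket hypothesis, which guarantees $\mathbb{R}\cap L[\rho]$ is countable) $\rho$-almost every $y\in p$ is $L[\rho]$-random and $\rho$ is continuous, this produces uncountably many elements of $A$, contradicting countability. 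If you want to repair your write-up, replace the basis-theorem/ranking machinery by this forcing-plus-absoluteness argument; the only role of the ambient hypotheses is to ensure there are uncountably many $\rho$-random reals over $L[\rho]$ inside $p$.
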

\begin{proof}
Suppose that $\varphi$ is a $\Pi^1_2$-formula and $x$ is a so that $\varphi(x)$ and the set $\{y\mid \varphi(y)\}$ is countable. Suppose that there is a continuous measure $\rho$ so that $x$ is $L$-random respect to $\rho$. Note that $L[\rho,x]\models \varphi(x)$ by the Shoenfield absoluteness. Then $p\Vdash \varphi(\dot{x})$ for some condition $p$. Then  for any  $L$-random real $y\in p$,  $L[\rho,y]\models \varphi(y)$. By Shoenfield absoluteness again, $\varphi(y)$ is true which contradicts to the assumption.
\end{proof}
Note that there is a contructible real which does not belong to any $\Pi^1_2$-countable set. 

\begin{definition}
  $Q_3=\{x\mid \exists \alpha<\omega_1\forall z(|z|=\alpha\rightarrow x\leq_{\Delta^1_3}z)\}.$
\end{definition}

$Q$-theory was introduced and studied by Harrington, Kechris, Martin, Solovay and Woodin.

\begin{proposition}
For any real $x$, there is a real $y\geq_T x$ so that there is a continuous measure $\rho\leq_T y$ so that $y$ is $L$-random respect to $\rho$.
\end{proposition}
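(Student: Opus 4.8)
The plan is to manufacture $y$ as $x\oplus z$ where $z$ is a real Lebesgue-random over $L[x]$, and to take for $\rho$ the measure that is forced to agree with $x$ on the even coordinates and is Lebesgue on the odd coordinates; this $\rho$ is continuous, satisfies $\rho\equiv_T x\leq_T y$, and a push-forward argument shows $y$ is $L$-random with respect to it. Here I write $x\oplus z$ for the interleaving that puts $x$ on the even positions and $z$ on the odd positions, and I read ``$y$ is $L$-random with respect to $\rho$'' as ``$y$ lies in no $\rho$-null Borel set with a code in $L[\rho]$'' (equivalently, $y$ is random-generic over $L[\rho]$).

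First I would set up the coding map and measure. Let $h\colon 2^\omega\to 2^\omega$ be $h(w)=x\oplus w$; this is a continuous injection and is computable from $x$. Let $\rho=\lambda\circ h^{-1}$ be the push-forward of Lebesgue measure $\lambda$. Then $\rho$ is a Borel probability measure supported on the closed set $\{v : v(2n)=x(n)\text{ for all }n\}$, and it is continuous: since $h$ is injective, $\rho(\{v\})=\lambda(h^{-1}(\{v\}))\le\lambda(\{u\})=0$ for every $v$. Moreover $\rho\equiv_T x$: from $x$ one computes $\rho$ on every cylinder, and conversely $x(n)=b$ iff the cylinder of least length fixing the bit in position $2n$ to be $b$ has positive $\rho$-measure. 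In particular $L[\rho]=L[x]$.

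Next I would produce $z$ and verify the three requirements. Since $PD$ is assumed, $x^{\#}$ exists, so $\omega_1^{L[x]}<\omega_1$; hence there are only countably many Borel codes in $L[x]$, the union of the $\lambda$-null Borel sets coded in $L[x]$ is $\lambda$-null, and we may pick $z$ in its complement, i.e.\ $z$ Lebesgue-random over $L[x]$. Set $y=h(z)=x\oplus z$. Then $y\equiv_T x\oplus z\ge_T x$, and $\rho\equiv_T x\le_T y$, so it remains to check that $y$ is $L$-random with respect to $\rho$. Let $B$ be a $\rho$-null Borel set with a code in $L[\rho]=L[x]$. Because $h\le_T x$ and $h$ is continuous, $h^{-1}(B)=\{w : x\oplus w\in B\}$ is a Borel set whose code is recursive in $x$ together with a code for $B$, hence lies in $L[x]$; and $\lambda(h^{-1}(B))=\rho(B)=0$. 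Since $z$ is Lebesgue-random over $L[x]$ it avoids $h^{-1}(B)$, so $y=x\oplus z\notin B$, as required.

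There is no genuine obstacle here; the only points demanding (routine) care are that $\rho$ is actually atomless and satisfies $\rho\equiv_T x$, that taking preimages under the $x$-computable map $h$ keeps Borel codes inside $L[x]$ (this is the one place the hypothesis $h\le_T x$ is used), and the appeal to $PD$ to secure a Lebesgue-random real over $L[x]$. If one instead reads ``$L$-random with respect to $\rho$'' through lightface $\Sigma^1_1(\rho)$ (or $\Pi^1_1(\rho)$) null sets, the same argument works verbatim: $h^{-1}$ of a $\Sigma^1_1(\rho)$-null set is a $\Sigma^1_1(x)$-null set, which a real Lebesgue-random over $L[x]$ certainly avoids.
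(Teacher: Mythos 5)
Your proposal is correct and is essentially the paper's own argument: the paper also sets $y=x\oplus r$ with $r$ Lebesgue-random over $L[x]$ and takes for $\rho$ the measure defined by the recursion that splits fairly at odd positions and forces the even positions to follow $x$, which is exactly your push-forward $\lambda\circ h^{-1}$. Your write-up merely fills in the routine verifications (atomlessness, $\rho\equiv_T x$, and pulling $\rho$-null sets coded in $L[\rho]=L[x]$ back to $\lambda$-null sets coded in $L[x]$) that the paper leaves implicit.
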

\begin{proof}
For any $x$, let $r$ be $L[x]$-random. Then $y=x\oplus r$ is $L$-random respect to  $\rho$ for the follow continuous measure $\rho\leq_T x\oplus r$.

$\rho(\sigma^{\smallfrown}i)= \left\{
\begin{array}{r@{\quad\quad}l}
\frac{\rho(\sigma)}{2}, & |\sigma|\mbox{ is odd},  \\
\rho(\sigma),& |\sigma|\mbox{ is even} \wedge i=x(\frac{|\sigma|}{2}),\\
0, & \mbox{Otherwise.}\\
\end{array}\right.$

\end{proof}

Let $$B=\{y\mid  \exists \rho\leq_T y(\rho \mbox{ is a continuous measure and $y$ is $L$-random respect to }\rho)\}.$$ Then $B$  has cofinally many $L$-degrees. 
\begin{proposition}
$2^{\omega}\setminus B$ is uncountable.
\end{proposition}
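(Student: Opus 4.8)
The plan is to exhibit a perfect set of reals lying outside $B$, which will show $2^\omega \setminus B$ is not only uncountable but in fact of size continuum. The natural candidate is a set of reals that are "too simple" to carry a witnessing continuous measure below them; concretely, I would look among reals $y$ for which everything computable from $y$ is, in a suitable sense, already present in $L$ at a bounded level, so that no continuous measure $\rho \leq_T y$ can make $y$ itself $L$-random. For instance, one expects the members of $Q_3$ (or of any $\Pi^1_2$-countable set, via the first Proposition of this section together with $NCR_L \supseteq$ those sets) to fail to be in $B$: if $y \in NCR_L$ then $y$ is $L$-random with respect to no continuous measure at all, in particular not with respect to one computable from $y$. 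So the first step is simply to observe $B \cap NCR_L = \emptyset$, hence $2^\omega \setminus B \supseteq NCR_L$.

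Next I would invoke the fact, recorded just above in this section, that there is a constructible real not belonging to any $\Pi^1_2$-countable set — but that only gives one real, not uncountably many. To get uncountably many reals outside $B$, I would instead argue directly that $NCR_L$ is uncountable, or find another large family. Under $PD$ (assumed throughout the section), $NCR_L$ is $\Pi^1_3$ and was shown in Logic Blog 2014 to be $\Pi^1_3$-countable; countability of $NCR_L$ would be the obstacle, so I should not route through "$NCR_L$ is uncountable". Instead, the cleaner route: show that $2^\omega \setminus B$ is comeager, or at least non-meager, by a Baire-category / forcing argument. The point is that for a fixed continuous measure $\rho$, the set of $\rho$-$L$-randoms is $L[\rho]$-null and in fact meager relative to the ideal generated by $L[\rho]$-coded closed null sets; and "$\rho \leq_T y$" pins $\rho$ down to countably many possibilities once we also demand $y$ computes $\rho$. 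The hard part is that $\rho$ ranges over all continuous measures, not a countable family, so one cannot just take a countable union of null sets.

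The way around this is the relativization-of-randomness trick already used in the preceding proof: if $y \oplus \emptyset$ (or more precisely $y$ alone) is $L$-random w.r.t. some $\rho \leq_T y$, then by van Lambalgen-style considerations $y$ must be rather complex — it cannot be, say, $\Delta^1_2$ in any countable ordinal, nor can it lie in $L$. So I would fix a perfect set $P \subseteq 2^\omega$ of reals each of which is "low for $L$-randomness" in the sense that every real Turing-below it is in $L$ below $\omega_1^L$ — for example, start from the fact (provable in ZFC, or at worst needing $PD$ for the surrounding framework) that $L \cap 2^\omega$ is uncountable when $\omega_1^L = \omega_1$, and... no: under $PD$, $\omega_1^L < \omega_1$ and $L \cap 2^\omega$ is countable. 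So the members of $L$ give only countably many reals in $2^\omega \setminus B$, which is not enough either. The genuinely hard step, and the one I expect to be the main obstacle, is producing \emph{continuum-many} reals that provably fail to be $L$-random with respect to any self-computable continuous measure without merely being constructible. I would attack this by a Sacks-forcing (perfect-tree) argument: build a perfect tree $T \subseteq 2^{<\omega}$ such that every branch $y$ of $T$ has the property that for each $e$, the $e$-th Turing functional either diverges on $y$ or outputs a measure $\rho_e^y$ for which one can diagonalize — using that the relevant "$y$ is $\rho$-$L$-random" statements are $\Sigma^1_2(\rho, y)$ hence absolute to $L[\rho, y]$, and that for $y$ on a sufficiently generic perfect tree, $L[\rho_e^y, y]$ contains a $\rho_e^y$-$L$-test covering $y$. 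Carrying out the fusion to keep this a perfect tree, while handling the oracle's continuous measures uniformly, is where the real work lies; the rest (concluding $|2^\omega \setminus B| = 2^{\aleph_0}$, in particular uncountable) is then immediate.
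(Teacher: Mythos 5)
The paper states this proposition without proof, so your attempt has to stand on its own; as written it does not. The genuine content of the proposition is exactly the step you defer: producing uncountably many $y$ such that \emph{every} continuous measure $\rho\leq_T y$ admits a $\rho$-null set coded in $L[\rho]$ covering $y$. Your first two observations ($NCR_L\subseteq 2^\omega\setminus B$ and $L\cap 2^\omega\subseteq 2^\omega\setminus B$) are correct but, as you yourself note, give only countably many reals under the section's hypothesis; and the concluding Sacks-style fusion is announced rather than carried out ("where the real work lies"). Worse, the success condition you set for the diagonalization is vacuous as stated: you ask that $L[\rho_e^y,y]$ contain a $\rho_e^y$-null test covering $y$. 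Since $\rho_e^y$ is continuous, the singleton $\{y\}$ is already a $\rho_e^y$-null Borel set coded in $L[\rho_e^y,y]$, so every real trivially satisfies this, and it says nothing about membership in $B$. What $y\notin B$ requires is a covering null set coded in $L[\rho]$ \emph{without} $y$ as a parameter, for each continuous $\rho\leq_T y$, and arranging that uniformly along a fusion over all Turing functionals is precisely the missing argument.

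One concrete way to close the gap, in the spirit of your perfect-tree idea: take a perfect set of reals of minimal Turing degree (Sacks). If $y$ is of minimal degree and $\rho\leq_T y$ is a continuous measure, then either $\rho\equiv_T y$, in which case $y\in L[\rho]$ and $\{y\}$ is an $L[\rho]$-coded $\rho$-null set, so $y$ is not $L$-random with respect to $\rho$; or $\rho$ is recursive, so $L[\rho]=L$, and $y$ cannot be $L$-random with respect to $\rho$ because a real random with respect to a recursive continuous measure is Turing-equivalent to a Lebesgue-random real (Levin/Kautz), Lebesgue-randoms are never of minimal degree (van Lambalgen), and minimality is preserved under Turing equivalence with nonrecursive reals. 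Hence every minimal-degree real lies outside $B$, giving continuum many such reals. Some version of this case split (measure equivalent to $y$ versus measure too simple to help) is what your fusion would have to implement; without it, the proposal remains a plan rather than a proof.
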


Actually $B$ is $\Pi^1_2$. 

Let $D=\{y_0\mid \forall y(y\geq_T y_0\rightarrow y\in B)\}$. Then $D$ is a nonempty $\Pi^1_2$-set and so contains a real $z_0<_T y_{0,3}$ but $y_{0,3}\not\leq_{Q_3} z_0$which is a base for $D$, where $y_{0,3}$ is the $Q_3$-complete real.

  \begin{lemma}For any $y\not\leq_{Q_3} z_0$, $y\oplus z_0$ is $L$-random respect to some continuous measure $\rho\leq_T y\oplus z_0$. Further more,  $y$ must be $L$-random respect to some continuous measure.\end{lemma}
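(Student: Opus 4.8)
The plan is to prove the two assertions of the lemma separately; only the second will use the hypothesis $y \not\leq_{Q_3} z_0$. For the first, recall that $z_0$ is a base for $D$, hence in particular $z_0 \in D$, and that $D = \{y_0 : \forall y\,(y \geq_T y_0 \to y \in B)\}$ is upward closed under $\leq_T$. Since $y \oplus z_0 \geq_T z_0$, we get $y \oplus z_0 \in B$, which by the definition of $B$ is exactly the assertion that there is a continuous measure $\rho \leq_T y \oplus z_0$ relative to which $y \oplus z_0$ is $L$-random. So the first sentence of the lemma follows from $z_0 \in D$ alone; the hypothesis on $y$ is not needed here.

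For the second assertion I would argue by contraposition. Assume $y$ is not $L$-random with respect to any continuous measure, so $y \in NCR_L$. By the result recalled above (Logic Blog 2014, Prop.~9.4), $NCR_L$ is a countable $\Pi^1_3$ set. The point I would then invoke is that, under $PD$, every countable $\Pi^1_3$ set is contained in $Q_3$ — this is the basic fact of $Q$-theory (Harrington, Kechris, Martin, Solovay and Woodin) behind introducing $Q_3$ and its complete real $y_{0,3}$ in the first place — so $y \in Q_3$. Finally $Q_3 \subseteq \{x : x \leq_{Q_3} z_0\}$, since if $x$ is $\Delta^1_3$ in every real coding a fixed countable ordinal $\alpha$, then a fortiori $x$ is $\Delta^1_3(z_0)$ in every such real, i.e.\ $x \leq_{Q_3} z_0$. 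Hence $y \leq_{Q_3} z_0$, contradicting the hypothesis; so $y$ is $L$-random for some continuous measure.

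The main obstacle is conceptual rather than computational: one needs the correct $Q$-theoretic inputs on hand, namely that under $PD$ every countable $\Pi^1_3$ set embeds into $Q_3$, and that $x \leq_{Q_3} z_0$ is the relativization to $z_0$ of membership in $Q_3$. It is worth noting why the naive hands-on route fails, since that explains the shape of the hypothesis: one would like to push $\rho$ forward under the even-bit projection $g$ (so that $g(y \oplus z_0) = y$), conclude by preservation of $L$-randomness that $y$ is $L$-random for the measure $g_\ast\rho \leq_T y \oplus z_0$, and then replace $g_\ast\rho$ by a genuine continuous measure via a Reimann--Slaman style argument. But $g_\ast\rho$ may have an atom at $y$, and this cannot be excluded: since $g_\ast\rho$ is computable from $y \oplus z_0$, the lowness of its atoms only yields $y \leq_T y \oplus z_0$, which is vacuous. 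It is precisely because the countability of $NCR_L$ makes no reference to $y \oplus z_0$ that the argument goes through — and this is also why the correct hypothesis is the $Q_3$-incomparability $y \not\leq_{Q_3} z_0$ rather than merely $y \not\leq_T z_0$.
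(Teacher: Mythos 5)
Your treatment of the first assertion is fine and is exactly what is intended: since $z_0\in D$ and $y\oplus z_0\geq_T z_0$, the definition of $D$ gives $y\oplus z_0\in B$, with no hypothesis on $y$; the paper's proof silently takes this for granted and only argues the ``furthermore'' part.

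The gap is in the second assertion, at the step ``$Q_3\subseteq\{x: x\leq_{Q_3} z_0\}$''. Your justification (if $x$ is $\Delta^1_3$ in every code of some countable $\alpha$, then a fortiori it is $\Delta^1_3(z_0)$ in every such code) only shows that $x$ lies in the relativized class $Q_3^{z_0}$, where the witnessing countable ordinal may be arbitrarily large; it does not show $x\leq_{Q_3} z_0$, which is the $Q$-theoretic reducibility at the third level, the analogue of hyperarithmetic reducibility, and is far more restrictive. The level-one picture makes the conflation visible: ``$x\leq_h z_0$'' versus ``$x\in L_{\omega_1}[z_0]$'' are very different, and Kleene's $\mathcal{O}$ is $\Delta^1_1$ in every code of $\omega_1^{CK}$ yet $\mathcal{O}\not\leq_h z_0$ whenever $z_0$ is a Gandy-basis real with $\omega_1^{z_0}=\omega_1^{CK}$. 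The choice of $z_0$ in this section is precisely the level-three Gandy-type basis step: $z_0\in D$ is picked so that $y_{0,3}\not\leq_{Q_3} z_0$, and since $y_{0,3}$ is the $Q_3$-complete real and belongs to $Q_3$, this is a direct counterexample to your containment under the intended reading of $\leq_{Q_3}$. There is also a structural reason your route cannot be what is wanted: if every member of $Q_3$ were $\leq_{Q_3} z_0$, the lemma would reduce to the separately stated fact $NCR_L\subseteq Q_3$ and could not do its job afterwards, which is to trap $NCR_L$ in a proper sub-cone below $z_0$ and thereby yield the properness in the theorem that $NCR_L$ is a proper subset of $Q_3$. The hypothesis $y\not\leq_{Q_3} z_0$ is used in an essentially different way in the paper: by Woodin's Posner--Robinson theorem relative to $z_0$ it produces $z_1\geq_T z_0$ with $y\oplus z_1\geq_T y_{0,3}^{z_1}$, and then the earlier construction of the section, relativized to $z_1$, manufactures a continuous measure $\rho\leq_T y\oplus z_1$ with respect to which $y$ is $L$-random. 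Your closing discussion of why the pushforward-under-projection idea fails is reasonable, but it does not repair this step; the Posner--Robinson input is where the hypothesis on $y$ genuinely enters.
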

 \begin{proof}
 Suppose that $y\not\leq_{Q_3} z_0$, then by Posner-Robinson Theorem relative to $z_0$ (due to Woodin), there is a real $z_1\geq_T z_0$ so that $y\oplus z_1\geq_T y_{0,3}^{z_1}$. Then by the discussion above relative to $z_1$, $y$ is $L$-random respect to some measure $\rho\leq_T y\oplus z_1$.
 \end{proof}
 
So $NCR_L\subseteq \{r\mid r\leq_L z_0\}$.

Since $NCR_L\subseteq \mathcal{C}_3$, we have that $NCR_L\subseteq Q_3$.

So we have the following result.

\begin{theorem}
$NCR_L$ is a proper subset of $Q_3$.
\end{theorem}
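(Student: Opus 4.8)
The plan is to assemble the theorem from the facts already established in this section, so the proof is essentially a bookkeeping argument. First I would recall what has been shown: the set $D = \{y_0 \mid \forall y (y \ge_T y_0 \to y \in B)\}$ is a nonempty $\Pi^1_2$ set, and by $Q$-theory (the base-for-$\Pi^1_2$-classes machinery of Harrington–Kechris–Martin–Solovay–Woodin) it contains a real $z_0$ which is a base for $D$ in the strong sense that $z_0 <_T y_{0,3}$ while $y_{0,3} \not\le_{Q_3} z_0$, where $y_{0,3}$ is the $Q_3$-complete real. Then the Lemma (via Posner–Robinson relative to $z_0$, due to Woodin) gives: for any $y \not\le_{Q_3} z_0$, $y$ is $L$-random with respect to some continuous measure, hence $y \notin NCR_L$. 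Contrapositively, $NCR_L \subseteq \{r \mid r \le_{Q_3} z_0\}$, which together with $NCR_L \subseteq \mathcal{C}_3$ (from Logic Blog 2014, giving $NCR_L$ is $\Pi^1_3$-countable, whence contained in $Q_3$) yields $NCR_L \subseteq Q_3$. So the inclusion $NCR_L \subseteq Q_3$ is already in hand; what remains is \emph{properness}.

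Second, and this is the crux, I would exhibit a real in $Q_3 \setminus NCR_L$. The natural candidate is $z_0$ itself, or more precisely any real of the form $y \oplus z_0$ with $y$ chosen $L[z_0]$-random: by the explicit construction in the second Proposition of the section (the odd/even interleaving measure $\rho$), such a real is $L$-random with respect to a continuous measure $\rho \le_T y \oplus z_0$, so it lies in $B$ and in particular is \emph{not} in $NCR_L$. The point to verify is that this witness can be taken inside $Q_3$. Since $z_0 \le_{Q_3} z_0$ trivially and $z_0 <_T y_{0,3} \le_{Q_3}$-below the $Q_3$-complete real, we have $z_0 \in Q_3$; and one can arrange the $L[z_0]$-random $r$ (equivalently the whole real $z_0 \oplus r$) to remain $\Delta^1_3$-below every real coding a suitably large countable ordinal, using that $z_0 \in Q_3$ and that Sacks forcing over $L[z_0]$ (or the relevant random forcing) is sufficiently absolute — alternatively, simply note that $Q_3$ is closed under the operations used, or pass to the $\Delta^1_3$-degree of $z_0$ directly and observe it is not an $NCR_L$ element because $NCR_L \subseteq \{r \mid r \le_L z_0\}$ is a \emph{strictly} smaller class than $Q_3$.

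The cleanest route to properness actually goes through the displayed containment $NCR_L \subseteq \{r \mid r \le_L z_0\}$: this says every element of $NCR_L$ is constructible-from-$z_0$ (in fact $L$-below $z_0$), whereas $Q_3$ certainly contains reals not $L$-below $z_0$ — for instance, by a Löwenheim–Skolem / reflection argument there is a real in $Q_3$ of strictly higher $L$-degree than $z_0$ (indeed $Q_3$, being a $\Pi^1_3$-bounded but genuinely proper class of reals, is not contained in any single cone $\{r \mid r \le_L z_0\}$, since such cones are countable under $PD$-type hypotheses while $Q_3$ is not). Comparing the two inclusions $NCR_L \subseteq \{r : r \le_L z_0\}$ and $\{r : r \le_L z_0\} \subsetneq Q_3$ then forces $NCR_L \subsetneq Q_3$.

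The main obstacle I anticipate is making the last comparison airtight: one must be sure that $\{r \mid r \le_L z_0\}$ really is a proper subclass of $Q_3$, which requires knowing the exact cardinality/closure properties of $Q_3$ under $PD$ (it is uncountable, or at least not an $L$-cone) versus the countability of an $L$-cone below a fixed real. If that cardinality comparison is delicate, the fallback is the direct-witness approach of the previous paragraph: build $y \oplus z_0$ explicitly inside $Q_3$, show it is $L$-random for a continuous measure (hence $\notin NCR_L$), and verify its $\Delta^1_3$-degree lies in $Q_3$ by tracking the complexity of the interleaving measure construction relative to $z_0 \in Q_3$. Either way the non-trivial input is entirely the $Q$-theory base-for-$\Pi^1_2$-classes result and Woodin's relativized Posner–Robinson theorem, both of which I would cite rather than reprove.
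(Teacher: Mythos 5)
Your first paragraph correctly reassembles the two containments the section provides ($NCR_L\subseteq Q_3$ via $NCR_L\subseteq \mathcal{C}_3$, and, from the Lemma via Woodin's relativized Posner--Robinson theorem, $NCR_L\subseteq\{r: r\leq_{Q_3} z_0\}$). The gap is in the properness step, which is the only thing left to prove. Your ``cleanest route'' rests on the claim that $Q_3$ is uncountable while the cone below $z_0$ is countable; this is false under the standing hypothesis $PD$ of the section. The whole point of $Q$-theory is that $Q_3$ is a \emph{countable} set of reals: by Steel's result quoted in the section it equals $2^\omega\cap M_1$, and it is the largest countable $\Sigma^1_3$ set closed under $\Delta^1_3$-reducibility. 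So a cardinality comparison with the (also countable) set of reals of $L[z_0]$ cannot possibly yield properness, and the unexplained ``L\"owenheim--Skolem / reflection'' gesture is not a substitute. The witness you need is already in the data you quoted and then did not use: $z_0$ was chosen precisely so that $y_{0,3}\not\leq_{Q_3} z_0$, where $y_{0,3}$ is the $Q_3$-complete real. Since $y_{0,3}\in Q_3$, while every element of $NCR_L$ is $\leq_{Q_3} z_0$ by the contrapositive of the Lemma, one gets $y_{0,3}\in Q_3\setminus NCR_L$ in one line; this is how the paper's setup is meant to conclude.

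Your fallback ``direct witness'' is also not airtight as stated. First, $z_0$ itself is not a usable witness: the containment $NCR_L\subseteq\{r: r\leq_L z_0\}$ does not exclude $z_0\in NCR_L$, so ``pass to the $\Delta^1_3$-degree of $z_0$ and observe it is not in $NCR_L$'' is a non sequitur. Second, for a witness of the form $z_0\oplus r$ with $r$ random over $L[z_0]$ you must actually verify membership in $Q_3$; ``$Q_3$ is closed under the operations used'' is not automatic, since the operation is ``choose a real random over $L[z_0]$.'' This can be repaired (one needs that $M_1$ contains reals random over $L[z_0]$, e.g.\ because $M_1$ is closed under sharps so $\omega_1^{L[z_0]}$ is countable in $M_1$, plus closure of $Q_3$ under join), but you neither state nor prove these facts, and the $y_{0,3}$ witness makes the whole detour unnecessary.
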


Let $P_2=\{x\mid \forall y(\kappa^x\leq \kappa^y\implies x\leq_L y)\}$, where $\kappa^x=((\aleph_1)^+)^{L[x]}$. The following lemma is obvious.
\begin{lemma}\label{lemma: p2 subset ncrl}
$P_2\subseteq NCR_L$.
\end{lemma}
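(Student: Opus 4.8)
I would prove the contrapositive: if $x$ is $L$-random with respect to some continuous measure $\rho$, then $x\notin P_2$. Fix a real $c$ coding $\rho$, so that $L[c]=L[\rho]$, and recall that ``$x$ is $L$-random with respect to $\rho$'' is to be read as ``$x$ is $\rho$-random over $L[\rho]$'' (this is the reading already used implicitly in the propositions above, where Shoenfield absoluteness is applied inside $L[\rho,x]$). To witness $x\notin P_2$ it then suffices to exhibit one real $y$ with $\kappa^x\le\kappa^y$ and $x\not\le_L y$, and the plan is to take $y:=c$.

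First I would check that $\kappa^x\le\kappa^c$. Since $x$ is $\rho$-random over $L[\rho]$, the model $L[\rho,x]=L[\rho][x]$ is a forcing extension of $L[\rho]$ by the measure algebra of $\rho$, which is c.c.c.; hence $L[\rho,x]$ and $L[\rho]$ have the same cardinals, so $((\aleph_1)^+)^{L[\rho,x]}=((\aleph_1)^+)^{L[\rho]}=\kappa^c$. Because $L[x]\subseteq L[\rho,x]$ and the value $\aleph_n^M$ is nondecreasing as $M$ ranges upward over transitive models, $\kappa^x=((\aleph_1)^+)^{L[x]}\le((\aleph_1)^+)^{L[\rho,x]}=\kappa^c$; this is insensitive to whether $\aleph_1$ is read as $\aleph_1^V$ or as computed inside the inner models, since c.c.c.\ forcing preserves cardinals and cofinalities in either reading. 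Second I would check $x\not\le_L c$: as $\rho$ is continuous, $\{x\}$ is a $\rho$-null Borel set, so if $x\in L[c]=L[\rho]$ then $\{x\}$ would be a $\rho$-null set with a code in $L[\rho]$ containing $x$, contradicting that $x$ is $\rho$-random over $L[\rho]$. Combining the two points, $y=c$ witnesses $x\notin P_2$, and the contrapositive is complete.

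The only genuine work here is bookkeeping about definitions: one must confirm that the ambient notion of $L$-randomness with respect to $\rho$ really does make $L[\rho,x]$ a c.c.c.\ generic extension of $L[\rho]$ with $x\notin L[\rho]$, and fix the reading of $\aleph_1$ in the definition of $\kappa^x$; once those are settled the two displayed inequalities are immediate, which is precisely why the statement was flagged as obvious. If one instead only knew that $x$ avoids the $\rho$-null sets with codes in $L$ (rather than in $L[\rho]$), the same argument would still run after first passing, via a Reimann--Slaman-type result in this setting, to a witnessing continuous measure simple enough that its code may be taken inside $L[x]$; but I do not expect this refinement to be needed.
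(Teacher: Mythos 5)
Your proof is correct, and it fills in exactly the argument the paper leaves out: the paper merely asserts the lemma as ``obvious'' with no proof, and the intended justification is precisely your contrapositive — take $y$ to be (a code for) the continuous measure $\rho$, use that $x$ is random-generic over $L[\rho]$ so that $L[\rho,x]$ is a c.c.c.\ extension of $L[\rho]$ and hence $\kappa^x\le\kappa^{L[\rho,x]}=\kappa^{\rho}$, while continuity of $\rho$ forces $x\notin L[\rho]$. Your reading of ``$L$-random with respect to $\rho$'' as genericity over $L[\rho]$ is the one the surrounding propositions implicitly use, so the extra caveat in your last paragraph is indeed unnecessary.
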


Now let $M_1$ be the minimal model with a Woodin cardinal. By Steel's result, $2^{\omega}\cap M_1=Q_3$.  
\begin{theorem}\label{theorem: ncrl is confinal}
$NCR_L$ is cofinal (in the $L$-degree sense) in $Q_3$.
\end{theorem}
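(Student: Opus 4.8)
The plan is to reduce the statement, via the preceding Lemma and Steel's theorem, to a cofinality statement about $P_2$ inside $M_1$, and then to manufacture the required reals by a condensation argument exploiting the Woodin cardinal of $M_1$.

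\textbf{Reduction.} Since $P_2\subseteq NCR_L$ by Lemma~\ref{lemma: p2 subset ncrl}, and we have already observed $NCR_L\subseteq Q_3 = 2^\omega\cap M_1$, it suffices to show: for every real $x\in M_1$ there is $y\in P_2\cap M_1$ with $x\leq_L y$. First I would reformulate membership in $P_2$. Because $x\leq_L y$ gives $L[x]\subseteq L[y]$ and hence (every $L[y]$-cardinal above $\aleph_1$ being an $L[x]$-cardinal) $\kappa^x\leq\kappa^y$, the assignment $x\mapsto\kappa^x$ is $\leq_L$-monotone; unravelling the definition, $y\in P_2$ holds exactly when $y$ is the $\leq_L$-least real of its upper cone $U_y=\{z:\kappa^z\geq\kappa^y\}$ — equivalently, $L[y]\subseteq L[z]$ for every real $z$ with $\kappa^z\geq\kappa^y$. (Here one must read the ``$\aleph_1$'' in $\kappa^x=((\aleph_1)^+)^{L[x]}$ as $\omega_1$ of the ambient model, so that this monotonicity and the unboundedness of $x\mapsto\kappa^x$ along the reals of $M_1$ both genuinely hold.) Thus the goal becomes: above any $x\in M_1$, find a real $y$ that is at once $\geq_L x$ and $\leq_L$-least among all reals of $\kappa$-value $\geq\kappa^y$.

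\textbf{Construction of the witness.} Fix $x\in M_1$, let $\delta$ be the Woodin cardinal of $M_1$, and choose a level $M_1|\theta$ with $\theta<\delta$ large enough that $x\in M_1|\theta$, that $\kappa^z<\kappa^y$ will be forced for all $z\in M_1$ of $M_1$-rank below $\theta$ (in particular all $z\leq_L x$), and that $M_1|\theta$ is sufficiently elementary in $M_1|\delta$; cofinally many $\theta<\delta$ work. Inside $M_1$, take a countable (in the sense of $M_1$) elementary submodel $N\prec M_1|\theta$ with $x\in N$, let $\bar N$ be its transitive collapse, and let $y$ be the canonical real coding $\bar N$; then $y\in M_1$ and $x\leq_L y$. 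By the fine-structural condensation of the mouse $M_1$, $\bar N$ is itself a level $M_1|\bar\theta$ of $M_1$, and $y$ is the $<_{M_1}$-least real coding it. Now if $z$ is any real with $\kappa^z\geq\kappa^y$, then $L[z]$ is forced to contain an initial segment of $M_1$ of height $\geq\bar\theta$; by condensation the $<_{M_1}$-least such segment is $M_1|\bar\theta$, its canonical code is $y$, and this code is uniformly recoverable in $L[z]$, whence $L[y]\subseteq L[z]$. Therefore $y\in P_2$, and by Steel's theorem $y\in Q_3$ as well, completing the argument.

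\textbf{Main obstacle.} The delicate point is the universal clause in the test for $P_2$: one must verify $L[y]\subseteq L[z]$ for \emph{every} real $z$ with $\kappa^z\geq\kappa^y$, including reals $z$ outside $M_1$. This forces one to tie the purely internal invariant $\kappa^z$ (computed in $L[z]$) to the presence inside $L[z]$ of the relevant initial segment of $M_1$, and to choose $\theta$, the collapse $\bar N$, and the coding so that $y$ is genuinely the $L$-least real of its $\kappa$-value and not merely $L$-least among reals of $M_1$. Lining up this indexing — using the condensation of $M_1$ and Steel's analysis of $2^\omega\cap M_1$ — is where the real work lies; a secondary point that must be nailed down is exactly which $\omega_1$ is meant in the definition of $\kappa$, since the whole reduction rests on $x\mapsto\kappa^x$ being $\leq_L$-monotone and cofinal along the reals of $M_1$.
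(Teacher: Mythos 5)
The blog states this theorem without any proof at all --- the only scaffolding it offers is the lemma $P_2\subseteq NCR_L$ and Steel's identification $2^\omega\cap M_1=Q_3$ --- so there is no written argument of the paper to compare yours against; your reduction (produce, $\le_L$-cofinally in the reals of $M_1$, reals lying in $P_2$) is exactly the route those hints suggest, and the preliminary parts are fine: monotonicity of $x\mapsto\kappa^x$ under $\le_L$, the reformulation of membership in $P_2$, and the production via a countable hull and condensation of a real $y\ge_L x$ coding a level $M_1|\bar\theta$ (modulo the usual fine-structural care about when the transitive collapse of a hull really is an initial segment of $M_1$).

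The genuine gap is the verification that $y\in P_2$, which is where the entire content of the theorem sits. You assert that any real $z$ with $\kappa^z\ge\kappa^y$ is ``forced'' to have an initial segment of $M_1$ of height $\ge\bar\theta$ inside $L[z]$, but you give no mechanism, and none is available at this level of generality: $\kappa^z=((\aleph_1)^+)^{L[z]}$ is a purely constructibility-theoretic ordinal invariant of the model $L[z]$, which satisfies $V=L[z]$ and in general contains no iterable mice with extenders at all; the largeness of this ordinal does not by itself place $M_1|\bar\theta$ (an object of consistency strength near a Woodin cardinal) inside $L[z]$. Even the toy instance --- every $z$ with $\kappa^z\ge\kappa^{0^\#}$ constructs $0^\#$ --- is not automatic: one must argue the contrapositive, using a covering-lemma style analysis inside $L[z]$ (where the relevant sharp or mouse is absent) to show that $y\notin L[z]$ forces $\kappa^z<\kappa^y$, and this in turn requires choosing $y$ so that the collapsing information witnessing the size of $\kappa^y$ is coded cofinally into the $L[y]$-hierarchy below $\kappa^y$. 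For levels of $M_1$ the corresponding core-model/covering tools are much heavier, and one must also check that $\kappa^y$ genuinely increases with $\bar\theta$; otherwise the cone $\{z:\kappa^z\ge\kappa^y\}$ contains reals that cannot construct $y$ and $y\notin P_2$ after all. You flag precisely this in your ``main obstacle'' paragraph, but deferring it means the proposal is a plan rather than a proof, and the key implication as literally stated is unjustified.
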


An immediate conclusion of Theorem \ref{theorem: ncrl is confinal} is
\begin{corollary}
$NCR_L$ is not $\Sigma^1_3$.
\end{corollary}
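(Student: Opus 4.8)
We argue by contradiction: suppose $NCR_L$ were $\Sigma^1_3$. Recall that $NCR_L\subseteq Q_3$ and that, by Steel's theorem, $Q_3 = 2^{\omega}\cap M_1$. The plan is to exploit the fact that, under $PD$, the set $2^{\omega}\cap M_1$ carries a good wellordering $\prec$ of order type $\omega_1^{M_1}$ --- namely the order in which reals appear in the construction of $M_1$ --- whose initial segments are exactly the sets $2^{\omega}\cap (M_1|\gamma)$ for $\gamma<\omega_1^{M_1}$, and whose associated norm lies at the third projective level (a $\Pi^1_3$-norm). This is the input drawn from the descriptive set theory of $M_1$ / $Q$-theory.

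Granting this, a boundedness argument applies: since $NCR_L$ is a $\Sigma^1_3$ subset of the field of the $\Pi^1_3$-norm $\prec$, there is some $\gamma<\omega_1^{M_1}$ with $NCR_L\subseteq 2^{\omega}\cap(M_1|\gamma)$. As $\gamma<\omega_1^{M_1}$, $M_1$ satisfies ``$M_1|\gamma$ is countable'', so we may fix a real $r\in M_1$ coding the structure $M_1|\gamma$. Then every element of $2^{\omega}\cap(M_1|\gamma)$ is recursive in $r$; in particular every $y\in NCR_L$ satisfies $y\leq_T r$, hence $y\leq_L r$.

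Now invoke Theorem~\ref{theorem: ncrl is confinal}: $NCR_L$ is cofinal in the $L$-degrees of $Q_3$, so for every $x\in Q_3$ there is $y\in NCR_L$ with $x\leq_L y\leq_L r$, whence $x\leq_L r$. Thus $Q_3 = 2^{\omega}\cap M_1\subseteq 2^{\omega}\cap L[r]$. But $r\in M_1$ and $M_1$ has a Woodin cardinal, so $M_1$ is closed under the sharp operation on reals; hence $r^{\#}\in 2^{\omega}\cap M_1$, while $r^{\#}\notin L[r]$. This contradicts $2^{\omega}\cap M_1\subseteq 2^{\omega}\cap L[r]$, so $NCR_L$ is not $\Sigma^1_3$.

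The step that requires genuine care --- and the main obstacle --- is the boundedness step: one must verify that $2^{\omega}\cap M_1$ really does carry a wellordering whose prewellordering sits on the $\Pi^1_3$ side, so that the boundedness theorem for $\Sigma^1_3$ subsets (valid under $PD$) is applicable; this rests on the structural analysis of $M_1$ and of the pointclass $\Pi^1_3$. Everything afterwards --- coding a countable level of $M_1$ by a real, the passage from $\leq_T$ to $\leq_L$, and the use of sharps --- is routine.
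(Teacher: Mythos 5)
The paper itself offers no argument here --- the corollary is stated as an ``immediate conclusion'' of Theorem~\ref{theorem: ncrl is confinal} --- so the comparison is with the intended one-liner rather than with a written proof. Your derivation is the natural way to fill this in, and everything after the boundedness step is correct: coding a level $M_1|\gamma$ (with $\gamma<\omega_1^{M_1}$) by a real $r\in M_1$, passing from $\leq_T r$ to $\leq_L r$, using transitivity of $\leq_L$ together with the cofinality of $NCR_L$ in the $L$-degrees of $Q_3$ to get $Q_3\subseteq L[r]$, and refuting this via sharps. (For the last step you can stay entirely inside $M_1$ and avoid any correctness issue: since $M_1$ is a proper-class inner model containing $r$, $(L[r])^{M_1}=L[r]$, and $(r^{\#})^{M_1}$ is a real of $M_1$ outside $L[r]$, which already contradicts $2^\omega\cap M_1\subseteq L[r]$.)

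The step you flag as the main obstacle is, however, exactly where all the content sits, and you do not establish it: that the order of appearance of reals in $M_1$ is a $\Pi^1_3$-norm on $Q_3$, and that $\Sigma^1_3$-boundedness applies to it, is a nontrivial piece of the Kechris--Martin--Solovay/Steel analysis, and note that the boundedness theorem also requires knowing that $Q_3$ itself is not $\Sigma^1_3$ (otherwise an unbounded $\Sigma^1_3$ subset yields no contradiction, exactly as $2^\omega\cap L$ is an unbounded $\Sigma^1_2$ subset of itself at the second level). There is a shorter standard route to the same $L$-degree bound that avoids analysing the norm altogether: $NCR_L\subseteq Q_3$ is countable, and under $PD$ every countable (thin) $\Sigma^1_3$ set consists only of $\Delta^1_3$ reals (the third-level effective perfect set theorem, in the part of Moschovakis' Chapter 6E already invoked in this section); every $\Delta^1_3$ real is recursive in the $Q_3$-complete real $y_{0,3}\in Q_3$ used earlier in the section, hence lies in $L[y_{0,3}]$. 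So if $NCR_L$ were $\Sigma^1_3$ it would be bounded in the $L$-degrees of $Q_3$ by $y_{0,3}$, and your cofinality-plus-sharp argument finishes the proof verbatim with $r=y_{0,3}$. In short: your approach is the right one and can be completed, but as written it rests on an unproved (if true) structural claim about $M_1$, for which the thin-$\Sigma^1_3$ route is the cleaner and more standard substitute.
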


Note that, by 6E14 in Msochvakis book, $NCR_L \cap \Delta^1_3$ is cofinal in $\Delta^1_3$.


%

\part{Group  theory and its connections to logic}

\section{Tent: Low-tech notes on group extensions}

\newcommand{\inv}{^{-1}}
 
\newcommand{\F}{\mathbb{F}}
\newcommand{\C}{\mathcal{C}}
\newcommand{\G}{\mathcal{G}}
\newcommand{\M}{\mathcal{M}}
\newcommand{\U}{\mathbb{U}}

\renewcommand{\phi}{\varphi}

(By Katrin Tent) We  explain in a ``low-tech" way how to describe and understand
group extensions.  


\subsection{Background}

It is well-known that group extensions of a group $N$ by a group $G$ can be classified  via the second cohomology groups of
certain associated modules. See e.g.\  \cite[Ch.\ 11]{Robinson:82}. Since this theory is quite involved, 
we  give here  an easy description of the class of possible group
extensions $E$ of $N$ by $G$, i.e. groups $E$
containing $N$ (or an isomorphic copy of $N$) as a normal subgroup  such that $E/N\cong G$.
One  writes this as \[1\longrightarrow N\longrightarrow E\longrightarrow G\longrightarrow 1.\]

Suppose that a group $G$ has the presentation
\[G=\langle s_1,\ldots   s_k \mid r_1,\ldots ,r_m\rangle\cong F_k/R\]
where $F_k$ is the free group of rank $k$ on generators
$s_1,\ldots s_k$ and $R$ is the normal subgroup of $F_k$
generated (as a normal subgroup) by $r_1,\ldots, r_m$.

Now suppose we have an extension $E$ such that
\[1\longrightarrow  N\longrightarrow E\longrightarrow G=\langle s_1,\ldots s_k\rangle \longrightarrow 1.\]

Let $\hat s_1,\ldots \hat s_k\in E$ be \emph{lifts} of $Rs_1,\ldots, Rs_k\in G$, i.e. the canonical projection of $\hat s_i$ is $Rs_i$ for $ i=1,\ldots, k$.
Clearly the $\hat s_i$ act on $N$ by conjugation and hence any \emph{word} $w=w(s_1,\ldots, s_k)$ in the
free group $F_k$ with generators $s_1,\ldots,s_k$ acts as an automorphism of $N$ via the natural conjugation 
action of $w(\hat s_1,\ldots,\hat s_k)\in E$.
Hence any group extension $E$
of $N$ by a $k$-generated group $G=\langle s_1,\ldots, s_k\rangle$ comes with an action of $F_k=F(s_1,\ldots, s_k)$ on $N$.

\subsection{Describing $E$}
Towards  describing  $E$ we will have to express this $F_k$ action on $N$. But this is not sufficient. We are missing the natural maps from $R$ to $N$ that transfer from the ``view"  $F_k/R$ of $G$ to the view $E/N$. Define $\phi_E: R\longrightarrow N$ by $ w(s_1,\ldots,s_k)\mapsto w(\hat s_1,\ldots,\hat s_k)$ for $w(s_1,\ldots,s_k) \in R$. By $ \Hom_{F_k}(R,N)$ we denote the set of homomorphisms from $R$ to $N$ that preserve the $F_k$ action. 
The following   is easy to verify.
\begin{lemma}\label{l:phi_E}
Using the previous notation we have $\phi_E\in\Hom_{F_k}(R,N)$.
\end{lemma}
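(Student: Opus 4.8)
The plan is to verify directly that the map $\phi_E \colon R \to N$ given by $w(s_1,\ldots,s_k) \mapsto w(\hat s_1,\ldots,\hat s_k)$ is (a) well-defined, (b) a group homomorphism, and (c) compatible with the $F_k$-actions on $R$ and on $N$. For well-definedness, note that if $w(s_1,\ldots,s_k) \in R$ then, by definition of the extension, the projection of $w(\hat s_1,\ldots,\hat s_k) \in E$ onto $G$ equals $w(Rs_1,\ldots,Rs_k) = R$, the identity of $G$; since the kernel of $E \to G$ is exactly $N$, we get $w(\hat s_1,\ldots,\hat s_k) \in N$. Moreover, if two words $w,w'$ in $F_k$ represent the same element of $R$, then they are literally the same element of $F_k$ (elements of $R$ \emph{are} certain elements of $F_k$), so substituting the lifts $\hat s_i$ — which is just evaluating a fixed word-map — gives the same element of $E$; hence $\phi_E$ is well-defined as a function on $R$.

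Next, $\phi_E$ is a homomorphism essentially by functoriality of substitution: for $v, w \in R$ we have $(vw)(s_1,\ldots,s_k) = v(s_1,\ldots,s_k)\,w(s_1,\ldots,s_k)$ as words, so $\phi_E(vw) = (vw)(\hat s_1,\ldots,\hat s_k) = v(\hat s_1,\ldots,\hat s_k)\,w(\hat s_1,\ldots,\hat s_k) = \phi_E(v)\phi_E(w)$, the products being taken in $E$ but landing in $N$ by part (a). The only subtlety here is to be careful that the multiplication used to form $vw$ in $R \le F_k$ matches the multiplication in $E$ after substitution — but this is immediate since substitution of a $k$-tuple of elements of any group for the free generators is a homomorphism $F_k \to E$, and $R$ is a subgroup of $F_k$.

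Finally, $F_k$-equivariance: the $F_k$-action on $R$ is by conjugation inside $F_k$ (since $R \trianglelefteq F_k$), and the $F_k$-action on $N$ is the one described in the Background, namely $u \in F_k$ acts on $n \in N$ by conjugation by $u(\hat s_1,\ldots,\hat s_k) \in E$. So for $u \in F_k$ and $w \in R$, writing $w^u = u^{-1} w u$ in $F_k$, we must check $\phi_E(w^u) = \phi_E(w)^{u}$, i.e. $(u^{-1}wu)(\hat s_1,\ldots,\hat s_k) = u(\hat s_1,\ldots,\hat s_k)^{-1}\, w(\hat s_1,\ldots,\hat s_k)\, u(\hat s_1,\ldots,\hat s_k)$; again this is just the statement that substitution $F_k \to E$ is a homomorphism, applied to the product $u^{-1}wu$. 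This shows $\phi_E \in \Hom_{F_k}(R,N)$.

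I do not expect any genuine obstacle here — this lemma is a bookkeeping verification. The one place to be slightly careful is the well-definedness discussion: one should make clear whether $\phi_E$ is being defined on abstract elements of $R$ (in which case well-definedness is automatic because evaluating a fixed word is a function) or via choices of representative words; either way the resolution is that substituting group elements for free generators is a homomorphism out of $F_k$ and $R$ is a subgroup, so restriction is automatic. I would phrase the proof around the single observation that, for each tuple of lifts $(\hat s_1,\ldots,\hat s_k)$, the assignment $u \mapsto u(\hat s_1,\ldots,\hat s_k)$ is a homomorphism $\psi \colon F_k \to E$; then $\phi_E = \psi\!\upharpoonright_R$ corestricted to $N$, and all three required properties (into $N$, homomorphism, $F_k$-equivariant) follow mechanically from properties of $\psi$ and from $R \trianglelefteq F_k$, $N \trianglelefteq E$.
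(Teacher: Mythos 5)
Your proof is correct: the single observation that substitution of the lifts gives a homomorphism $\psi\colon F_k\to E$, together with $R\trianglelefteq F_k$, $N=\ker(E\to G)$, and the definition of the $F_k$-action on $N$ as conjugation by $w(\hat s_1,\ldots,\hat s_k)$, yields all three required properties. The paper offers no argument at all for this lemma (it is dismissed as "easy to verify"), and your write-up is exactly the routine verification the authors intend.
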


The next lemma states that the group $E$ is determined -- up to isomorphism over $N$ -- by the
action of $F_k$ on $N$ and the homomorphism $\phi_E$:

\begin{lemma}\label{l:phi_E}
Using the previous notation suppose that  $E^1, E^2$ are groups with a common normal subgroup $N$
such that $E^i/N\cong G, i=1,2$. Let $\hat s^j_i\in E^j,j=1,2, i=1,\ldots, k$ be lifts of $Rs_1,\ldots, Rs_k$, respectively.

Suppose that the induced $F_k$-actions agree, i.e. for all $a\in N$ we have

\[a^{\hat s^1_i}=a^{\hat s^2_i}.\]

Then  $\phi_{E^1}=\phi_{E^2}$ $\LR$ $E^1$ and $E^2$ are isomorphic over $N$ via an isomorphism $g$ such that  $g(\hat s_i^1)=\hat s_i^2, i=1,\ldots k$.
\end{lemma}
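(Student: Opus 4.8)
The statement is an iff between two groups being isomorphic over $N$ compatibly with the chosen lifts, and the equality $\phi_{E^1} = \phi_{E^2}$. I would prove the two directions separately, with the forward direction ($E^1 \cong E^2$ via such $g$ $\Rightarrow$ $\phi_{E^1} = \phi_{E^2}$) being essentially immediate from the definitions, and the backward direction ($\phi_{E^1} = \phi_{E^2}$ $\Rightarrow$ existence of $g$) being the substantive part where a map must be constructed.

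For the forward direction: suppose $g\colon E^1 \to E^2$ is an isomorphism, fixing $N$ pointwise (or restricting to the identity on $N$), with $g(\hat s_i^1) = \hat s_i^2$. Then for any word $w(s_1,\dots,s_k) \in R$ we have $g(w(\hat s_1^1,\dots,\hat s_k^1)) = w(\hat s_1^2,\dots,\hat s_k^2)$ since $g$ is a homomorphism respecting the generators' lifts; but $w(\hat s_i^1,\dots) = \phi_{E^1}(w) \in N$ and $g$ is the identity on $N$, so $\phi_{E^1}(w) = \phi_{E^2}(w)$.

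For the backward direction (the main obstacle): I would define $g\colon E^1 \to E^2$ directly. Every element of $E^1$ can be written as $a \cdot w(\hat s_1^1,\dots,\hat s_k^1)$ for some $a \in N$ and some word $w \in F_k$ (because the cosets $N w(\hat s_i^1)$ exhaust $E^1/N \cong G$, as the $Rs_i$ generate $G$). Set $g(a \cdot w(\hat s_1^1,\dots)) := a \cdot w(\hat s_1^2,\dots)$. The crux is \emph{well-definedness}: if $a_1 w_1(\hat s^1) = a_2 w_2(\hat s^1)$ in $E^1$, then $w_2 w_1^{-1}$, evaluated at the lifts $\hat s_i^1$, lands in $N$ (since its image in $G$ is trivial), hence $w_2 w_1^{-1} \in R$ as a word in $F_k$, and $a_2^{-1} a_1 = w_2(\hat s^1) w_1(\hat s^1)^{-1} = \phi_{E^1}(w_2 w_1^{-1})$. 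Using $\phi_{E^1} = \phi_{E^2}$ this equals $\phi_{E^2}(w_2 w_1^{-1}) = w_2(\hat s^2) w_1(\hat s^2)^{-1}$, which rearranges (using that the $F_k$-actions on $N$ agree, so conjugation of $a$'s by words in the lifts is the same in $E^1$ and $E^2$) to $a_1 w_1(\hat s^2) = a_2 w_2(\hat s^2)$; so $g$ is well-defined. One then checks $g$ is a homomorphism — this uses again that the conjugation actions agree, so that when multiplying $(a_1 w_1)(a_2 w_2) = a_1 (w_1 a_2 w_1^{-1}) w_1 w_2$ the ``twisted'' factor $w_1 a_2 w_1^{-1}$ is computed identically on both sides — that $g$ is the identity on $N$ and sends $\hat s_i^1 \mapsto \hat s_i^2$, and that the symmetric construction $E^2 \to E^1$ is its inverse, giving an isomorphism over $N$.

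I expect the well-definedness check to be where all the hypotheses are genuinely used, and the bookkeeping with twisted products (making sure the agreement of the two $F_k$-actions on $N$ is invoked at exactly the right places) to be the only real subtlety; everything else is routine verification that the naive ``rename the generators'' map is a group isomorphism.
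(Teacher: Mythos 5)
Your proposal is correct and follows essentially the same route as the paper: the forward implication is the same one-line computation using $g\upharpoonright N=\id$ and $g(\hat s_i^1)=\hat s_i^2$, and the backward implication constructs the same ``rename the lifts'' map, with well-definedness resting on $w(\hat s^1)w'(\hat s^1)^{-1}\in N \Leftrightarrow ww'^{-1}\in R$ together with $\phi_{E^1}=\phi_{E^2}$, and the homomorphism property resting on the agreement of the $F_k$-actions. The only (immaterial) differences are that you obtain bijectivity via the symmetric inverse construction where the paper argues injectivity from the displayed equivalences and surjectivity from generation by $N$ and the lifts, and that the agreement of actions is not actually needed in your well-definedness rearrangement, only in the homomorphism check.
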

\begin{proof}
First suppose that $\phi_{E^1}=\phi_{E^2}$. Define
\[g: E_1\longrightarrow E_2, w(\hat s^1_1,\ldots,\hat s^1_k)n\mapsto w(\hat s^2_1,\ldots,\hat s^2_k)n.\]
Note that \[w(\hat s^1_1,\ldots, \hat s^1_k)n=w'(\hat s^1_1,\ldots,\hat s^1_k)n'\] if and only if
\[w(\hat s^1_1,\ldots,\hat s^1_k)(w'(\hat s^1_1,\ldots, \hat s^1_k))^{-1}\in N\] if and only if
\[w(s_1,\ldots, s_k)(w'(s_1,\ldots,s_k))^{-1}\in R.\] Since $\phi_{E^1}=\phi_{E^2}$, we
see that indeed $g$ is well-defined  and injective.

Note that $f$ is an $F_2$-homomorphism because the $F_2$-actions on $N$ agree. Since $E^j$ is generated by $N$ and $\hat s^j_1,\ldots,\hat s^j_k, j=1,2$, this now implies that $g$ is surjective
and hence an isomorphism.

For the other direction, suppose that $g:E^1\longrightarrow E^2$ is an isomorphism over $N$ with $g(\hat s_i^1)=\hat s_i^2, i=1,\ldots k$
and $g\upharpoonright N=\id$. For any $w(s_1,\ldots, s_k)\in R$ we thus have $\phi_{E^1}(w(s_1,\ldots,s_k))=w(\hat s^1_1,\ldots,\hat s^1_k)=g(w(\hat s^1_1,\ldots,\hat s^1_k))=
w(\hat s^2_1,\ldots,\hat s^2_k)=\phi_{E^2}(w(s_1,\ldots, s_k))$, proving the claim.
\end{proof}

\subsection{Action of $\Hom_F(R,Z(N))$}
In the following to simplify notation we let $k=2$. We consider the role of the $F$-homomorphisms from $R$ to the centre of $N$. 
Recall that a group action is called \emph{regular}
if it is transitive and point stabilizers are trivial. 

\begin{lemma}
Let $C$ be the center of $N$.
The group $\Hom_F(R,C)$ acts regularly on the set \[X=\{\phi_E\colon E\mbox{ is extension of } N \mbox{ by } G \mbox{
with prescribed }  F_2\mbox{-action on } N\}\]
via $\phi^\alpha(w(s,t))=\phi(w(s,t))\alpha(w(s,t))$ for $\alpha\in Hom_F(R,C)$ and $\phi\in X$
\end{lemma}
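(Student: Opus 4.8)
The statement has three parts: (1) the formula $\phi^\alpha(w(s,t)) = \phi(w(s,t))\alpha(w(s,t))$ genuinely defines an element of $X$, i.e. there really is an extension $E^\alpha$ with the prescribed $F_2$-action whose associated map $\phi_{E^\alpha}$ equals $\phi^\alpha$; (2) this defines a group action of $\Hom_F(R,C)$ on $X$; (3) the action is regular, i.e. transitive with trivial stabilizers. The key auxiliary input is Lemma~\ref{l:phi_E} (the second one), which says an extension is determined up to isomorphism over $N$ by the $F_2$-action together with the homomorphism in $\Hom_{F_2}(R,N)$, so $X$ may be identified with the relevant subset of $\Hom_{F_2}(R,N)$.

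\textbf{Well-definedness of the action and that $\phi^\alpha \in X$.} First I would check $\phi^\alpha \in \Hom_{F_2}(R,N)$: since $\alpha$ takes values in the center $C$, the pointwise product of the homomorphism $\phi$ with $\alpha$ is again a homomorphism (centrality is exactly what makes $(ab)(cd) = (ac)(bd)$ when $b,d$ are central), and it is $F_2$-equivariant because both factors are. So $\phi^\alpha$ lands in $\Hom_{F_2}(R,N)$. To see it actually arises from an extension with the prescribed action, I would give the standard explicit construction: realize every element of $X$ as follows — on the set $F_2 \times N$ quotient appropriately, or more cleanly, use the description implicit before Lemma~\ref{l:phi_E}, where $E$ is built from words $w(\hat s_1, \hat s_2)n$ with multiplication governed by the $F_2$-action and the rule that $w \in R$ contributes the element $\phi(w) \in N$. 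Replacing $\phi$ by $\phi^\alpha$ changes only how relators are "absorbed" into $N$, and one checks the resulting multiplication is still associative precisely because $\alpha(R) \subseteq C$. This gives an extension $E^\alpha$ with $\phi_{E^\alpha} = \phi^\alpha$. Then the group-action axioms are immediate: $\phi^{\alpha\beta}(w) = \phi(w)(\alpha\beta)(w) = \phi(w)\alpha(w)\beta(w) = (\phi^\alpha)^\beta(w)$ using commutativity of $C$, and $\phi^{1} = \phi$.

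\textbf{Regularity.} For transitivity: given $\phi, \psi \in X$, both lie in $\Hom_{F_2}(R,N)$ and (by the definition of $X$) both restrict to the prescribed conjugation behavior on $N$; I claim the pointwise "difference" $\alpha(w) := \phi(w)^{-1}\psi(w)$ takes values in $C$. This is the heart of the argument and the step I expect to be the main obstacle. The reason it should land in the center: for $w \in R$ and $a \in N$, conjugation of $a$ by $\phi(w)$ inside any extension realizing $\phi$ is determined by the $F_2$-action of $w$, and $w$ acts trivially on $N$ — because $w \in R$ means $w$ maps to the identity of $G$, and $G$ acts on $N$ via the extension only through lifts, so the action of a relator word on $N$ by conjugation is inner-by-an-element-of-$N$ and in fact (comparing $\phi$ and $\psi$, which induce the same $F_2$-action) must be trivial. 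More carefully: $\phi(w)$ and $\psi(w)$ induce the same automorphism of $N$ (the $F_2$-action of $w$), hence $\phi(w)^{-1}\psi(w)$ centralizes $N$, i.e. lies in $C$. One then checks $\alpha \in \Hom_F(R,C)$ (homomorphism property uses centrality again; $F$-equivariance is inherited), and $\phi^\alpha = \psi$ by construction. For trivial stabilizers: if $\phi^\alpha = \phi$ then $\phi(w)\alpha(w) = \phi(w)$ for all $w \in R$, so $\alpha(w) = 1$ for all $w$, i.e. $\alpha$ is the trivial homomorphism. Combining, the action is transitive with trivial point stabilizers, hence regular, which is exactly the claim.

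Thus the only genuinely non-formal point is the verification that the ratio of two elements of $X$ is center-valued, which follows from unwinding that a relator word, acting by conjugation via any lift, induces the prescribed $F_2$-action on $N$ — the same action for all members of $X$ — so the discrepancy must centralize $N$. Everything else is bookkeeping with the commutativity of $C$ and the construction of extensions sketched before Lemma~\ref{l:phi_E}.
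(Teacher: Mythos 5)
Your proposal is correct and takes essentially the same route as the paper: transitivity via the observation that $\phi_{E_1}(w)$ and $\phi_{E_2}(w)$ induce the same (prescribed) conjugation action of $w$ on $N$, so their quotient is central-valued; trivial stabilizers as an immediate cancellation; and realization of $\phi^\alpha$ by an explicit extension, which the paper builds with a transversal of $R$ in $F_2$ exactly in the spirit of your sketch. Your extra bookkeeping (that the difference is an $F$-equivariant homomorphism, and the action axioms using commutativity of $C$) just fills in details the paper leaves implicit.
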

\begin{proof} It is clear that point stabilizers are trivial. 
To see that the action is transitive,  notice that for extensions $E_1, E_2$ of $N$ by $G$ with the prescribed  $F_2$-action on $N$, and lifts $\hat s_i,\hat t, i=1,2$ as before
we have  for all $n\in N$

\[n^{\phi_{E_1}(w(s,t))}=n^{w(\hat s_1,\hat t_1)}=n^{w(s,t)}=n^{w(\hat s_2,\hat t_2)}=n^{\phi_{E_2}(w(s,t))}\]

and hence $\phi_{E_1}(w(s,t))(\phi_{E_2}(w(s,t)))^{-1}\in C$ and so $\phi_{E_1}$ and $\phi_{E_2}$ differ
by an element in $\Hom_F(R,C)$.

We next verify  that $\phi_E^\alpha=\phi_{E'}$ for some extension $E'$ with the same prescribed $F_2$-action on $N$. 
Define $E'$ by choosing a transversal $T$ for $F_2/R$ so that any element $w(s,t)\in F_2$ can be written uniquely
as $w(s,t)=v(s,t)r(s,t)$ where $v(s,t)\in T,r(s,t)\in R$.

We now define the elements of $E'$ as $nw(s,t)=nv(s,t)\phi_E(r(s,t))\alpha(r(s,t))$
with the induced multiplication. Then $E'$ is an extension with the prescribed $F(s,t)$ action  and $\phi_{E'}=\phi_E^\alpha$.
\end{proof}

\section{Doucha and Nies: groups with bi-invariant metric}
\label{s:DouchaNies}
Michal Doucha and Nies worked in Auckland and at the Research Centre Coromandel during Michal's visit to New Zealand December  2014-January 2015. One topic of their discussions was   groups that are equipped with a  bi-invariant metric.  Michal has already submitted or published several papers on this.

\begin{definition} A metric $d$ on a group $G$ is called \emph{bi-invariant}  if the left and the right translations  are isometries, that is,  $d(gx, gy) = d(xg, yg)= d(x,y)$ for each $x,y, g \in G$. \end{definition}

Every group $G$ has a trivial such metric, namely $d(x,y)= 0$ if $x=y$, and $1$ otherwise. 
For a a topological  group $G$,  a natural  question is whether $G$ admits a compatible bi-invariant metric, that is, one which induces the given topology on $G$. Recall that  a topological group   $G$  is metrizable iff $1_G$ has a countable base of neighbourhoods.  A well known useful fact is the following.  Completely metrizable means that the  group   admits some complete compatible metric;  this includes  Polish group in particular. 

\begin{prop} Any compatible bi-invariant metric  $d$ on a completely metrizable group  $G$   is   complete.  \end{prop}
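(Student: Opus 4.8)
The plan is to form the metric completion $(\widehat G,\widehat d)$ of $(G,d)$ and prove that $G=\widehat G$; completeness of $d$ is then immediate.

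First I would verify that $\widehat G$ is again a topological group, with $G$ sitting inside it as a dense subgroup. The crucial point — and the only place where bi-invariance is genuinely used — is that multiplication is uniformly continuous for $d$: applying left-invariance and then right-invariance, $d(x_1y_1,x_2y_2)\le d(x_1y_1,x_1y_2)+d(x_1y_2,x_2y_2)=d(y_1,y_2)+d(x_1,x_2)$, and inversion is even an isometry, since $d(x^{-1},y^{-1})=d(e,xy^{-1})=d(x,y)$. Consequently multiplication and inversion extend uniquely to continuous maps on $\widehat G$, the group axioms persist on the closure by density and continuity, and $(\widehat G,\widehat d)$ is a completely metrizable topological group containing $G$ densely.

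Next I would exploit that $G$ is completely metrizable: a completely metrizable subspace of a (completely) metrizable space is a $G_\delta$, so $G$ is a dense $G_\delta$ subset of $\widehat G$. Now fix $g\in\widehat G$. Left translation by $g$ is a homeomorphism of $\widehat G$, so $gG$ is again a dense $G_\delta$. Since $\widehat G$ is completely metrizable it is a Baire space, hence $G\cap gG\neq\emptyset$; choosing $h_1,h_2\in G$ with $h_1=gh_2$ gives $g=h_1h_2^{-1}\in G$. Therefore $G=\widehat G$, and $(G,d)$ is a complete metric space.

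I expect the substantive step to be the first one. One must be attentive that bi-invariance, not mere left-invariance, is what makes the completion a topological group: for a general left-invariant compatible metric the completion is only a topological semigroup (as happens, e.g., for $S_\infty$), and indeed such groups need not be left-complete even when completely metrizable. Once the completion is known to be a group, the remaining ingredients — that completely metrizable subspaces are $G_\delta$, and the Baire-category translation argument — are routine. (Alternatively one could phrase the same argument in terms of the two-sided uniformity and the fact that completely metrizable groups are Raikov-complete, but the completion argument above is self-contained.)
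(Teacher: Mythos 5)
Your proposal is correct and follows essentially the same route as the paper: extend the group operations to the metric completion using bi-invariance (inversion isometric, multiplication Lipschitz), note that $G$ is a dense $G_\delta$ in the completion since it is completely metrizable, and conclude $G$ equals its completion by the Baire category argument with translated copies of $G$. Your phrasing of the Baire step (intersecting $G$ with $gG$) is just a minor rewording of the paper's disjoint-dense-coset contradiction, so there is nothing substantively different to flag.
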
 
To see this, let      $\bar{G}$ be the metric completion of $G$ with respect to $d$. Using bi-invariance, one can easily check that for every $x,y,u,v\in G$ we have $d(x,y)=d(x^{-1},y^{-1})$ and $d(xy,uv)\leq d(x,u)+d(y,v)$, thus the inverse operation is isometric and the multiplication is Lipchitz. It follows that the group operations extend to the completion $\bar{G}$ (note that this is not true in general for left-invariant metrics, where the inverse does not always extend to the completion). Next, it is a well-known fact from general topology that a completely metrizable subset of a metrizable space is $G_\delta$. Thus $G$ is a dense $G_\delta$ subset of $\bar{G}$. However, if $G\neq \bar{G}$, then there are left-cosets of $G$ in $\bar{G}$ which are disjoint and still dense $G_\delta$. That contradicts the Baire category theorem.

The following is another well known fact.
\begin{prop} A  topological group   $G$ has a compatible bi-invariant metric iff $1_G$ has a countable base of neighbourhoods  so that each member is closed under conjugation. \end{prop}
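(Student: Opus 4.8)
The plan is to prove both directions using the standard technique for metrizing topological groups, namely the Birkhoff--Kakutani construction, while keeping track of the extra conjugation-invariance. First I would handle the easy direction: if $G$ has a compatible bi-invariant metric $d$, then the open balls $B(1_G, 1/n)$ form a countable base of neighbourhoods of $1_G$, and bi-invariance gives $d(gxg^{-1}, 1_G) = d(gxg^{-1}, gg^{-1}) = d(x g^{-1}, g^{-1}) = d(x, 1_G)$, so each ball is closed under conjugation; passing to the closed balls $\overline{B}(1_G, 1/n)$ (which are also conjugation-invariant, being closures of conjugation-invariant sets under the continuous conjugation maps) yields the required base of closed-under-conjugation neighbourhoods.

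For the converse, suppose $\{V_n\}_{n \in \NN}$ is a countable base of neighbourhoods of $1_G$, each closed under conjugation. First I would shrink and symmetrize: build a decreasing sequence $\{W_n\}$ of open symmetric conjugation-invariant neighbourhoods of $1_G$ with $W_{n+1}^3 \subseteq W_n$ and $W_n \subseteq V_n$. Symmetry and conjugation-invariance are preserved by intersecting $W_n$ with $W_n^{-1}$ and with $\bigcap$ of conjugates (or rather, since each $V_n$ is already conjugation-invariant, one can choose the $W_n$ inside them directly using continuity of multiplication at $1_G$; conjugation-invariance of $W_n$ comes for free because one can replace $W_n$ by the union of all its conjugates, which stays open and stays inside $V_{n-1}$). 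Then I would run the Birkhoff--Kakutani recipe: define $\rho(x) = 2^{-n}$ if $x \in W_n \setminus W_{n+1}$ (and $\rho(1_G) = 0$, $\rho(x) = 1$ if $x \notin W_0$), and set
\[
d(x,y) = \inf \Big\{ \sum_{i=0}^{m-1} \rho(z_i^{-1} z_{i+1}) \ \colon\ x = z_0, z_1, \ldots, z_m = y \Big\}.
\]
The standard chain lemma (if $\sum \rho(a_i) < 2^{-n}$ then the product $a_0 \cdots a_{m-1} \in W_n$) shows $d(x,y) \geq \tfrac12 \rho(x^{-1}y)$, so $d$ is a genuine metric compatible with the topology (the sets $\{x : d(x,1_G) < \varepsilon\}$ interleave with the $W_n$). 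Left-invariance of $d$ is immediate since $\rho$ depends only on $z_i^{-1}z_{i+1}$ and $(gz_i)^{-1}(gz_{i+1}) = z_i^{-1}z_{i+1}$. The new ingredient is right-invariance: I would show instead that $\rho$ is conjugation-invariant, i.e. $\rho(gxg^{-1}) = \rho(x)$, which holds because each $W_n$ is closed under conjugation, so $x \in W_n \setminus W_{n+1} \iff gxg^{-1} \in W_n \setminus W_{n+1}$. Conjugation-invariance of $\rho$ plus left-invariance of $d$ gives right-invariance: $d(xg, yg) = d(g^{-1}xg \cdot g, \, g^{-1}yg \cdot g)$ is not quite the move; rather, observe $d(xg, yg) = d(x g (yg)^{-1}, 1_G)$-type reasoning fails without abelianness, so I would instead argue directly on chains — given a chain from $x$ to $y$, conjugating every vertex by $g$ (i.e. replacing $z_i$ by $g^{-1} z_i g$... ) — more cleanly: a chain $x = z_0, \ldots, z_m = y$ for $d(x,y)$ transforms into the chain $xg, z_1 g, \ldots, z_{m-1}g, yg$ whose consecutive increments are $(z_i g)^{-1}(z_{i+1} g) = g^{-1}(z_i^{-1} z_{i+1}) g$, and $\rho$ of that equals $\rho(z_i^{-1}z_{i+1})$ by conjugation-invariance; so $d(xg,yg) \le d(x,y)$, and symmetrically equality holds.

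The main obstacle is exactly this last point — ensuring the weight function $\rho$ (equivalently the neighbourhood base used in the construction) is conjugation-invariant while still satisfying the nesting condition $W_{n+1}^3 \subseteq W_n$; one must check that taking the union of conjugates of a candidate neighbourhood does not destroy the nesting. This is where the hypothesis that the \emph{given} base members $V_n$ are already conjugation-invariant is used: one chooses $W_n$ inside $V_n$ with the cube condition, then enlarges $W_n$ to $\widetilde{W}_n = \bigcup_{g} g W_n g^{-1}$; since $\widetilde{W}_n \subseteq V_{n}$ (as $V_n$ is conjugation-invariant) one can restart the nesting with the $\widetilde{W}_n$ in place of the $V_n$ at the next round, or more carefully interleave the two operations. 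Everything else is the routine verification in the Birkhoff--Kakutani theorem, which I would cite rather than reproduce in full.
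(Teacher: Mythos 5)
The paper states this proposition as a well-known fact and gives no proof, so there is nothing to compare your argument against; judging it on its own terms: the easy direction, the Birkhoff--Kakutani weight $\rho$, the chain lemma, and the derivation of right-invariance from conjugation-invariance of $\rho$ are all correct. The genuine gap is exactly at the point you yourself flag as the main obstacle, and the mechanism you lean on there does not work: replacing a candidate $W$ by the union of its conjugates $\widetilde{W}=\bigcup_g gWg^{-1}$ preserves openness and containment in the conjugation-invariant $V_n$, but it does not preserve the cube condition, because an element of $\widetilde{W}^3$ is a product $g_1w_1g_1^{-1}\,g_2w_2g_2^{-1}\,g_3w_3g_3^{-1}$ with three unrelated conjugators; this is not a conjugate of an element of $W^3$ and need not lie in $W_{n-1}$ (nor in $\widetilde{W}_{n-1}$). ``Restarting the nesting with the $\widetilde{W}_n$'' does not help: the same loss recurs at every stage, so as written the nested conjugation-invariant sequence is never actually produced.

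The repair is the alternative you mention only in passing, and it should replace the conjugate-union entirely: exploit that the $V_m$ form a base and are themselves conjugation-invariant. Since conjugation by a fixed $g$ is a homeomorphism, interiors and symmetrizations of conjugation-invariant sets are again conjugation-invariant, so without loss of generality each $V_m$ is open and symmetric. Given $W_n$, choose by continuity an open $U$ with $U^3\subseteq W_n$, then choose $m$ with $V_m\subseteq U$ and set $W_{n+1}=V_m\cap V_{n+1}$: this is open, symmetric, conjugation-invariant with no further surgery, satisfies $W_{n+1}^3\subseteq U^3\subseteq W_n$, and keeps $\{W_n\}$ a neighbourhood base at $1_G$. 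With that substitution the rest of your construction goes through. Two minor points: ``closed under conjugation'' means invariant under conjugation as a set, not topologically closed, so the detour through closed balls in the forward direction is unnecessary (the open balls already suffice); and $d$ is a genuine metric rather than a pseudometric because $\bigcap_n W_n=\{1_G\}$, which holds since the group is Hausdorff (as it must be to carry any compatible metric).
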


 For instance, Abelian Polish groups and compact Polish  groups admit a compatible bi-invariant metric. For another set of  examples, let $M$ be a bounded metric space. Let $G$ be the group of isometries of $M$ with the supremum distance $d(\sigma, \tau ) = \sup_{g \in G} d(g\sigma, g \tau)$. Then $d$ is bi-invariant. Note that this group is in general not separable (it is separable if  $M$ is compact).

$SL_2(\RR)$ is an example of a Polish  group that does not admit a bi-invariant metric \cite[Exercise 2.1.9]{Gao:09}. Given that abelian Polish groups admit a compatible bi-invariant metric, it is natural to ask what happens for groups that in some sense   close to abelian, such as a group that is nilpotent of class $2$.  For instance, the   Heisenberg group $UT^3_3(\RR)$, which   consists of the upper triangular $3 \times 3$ matrices with $1$'s on the main diagonal, is nilpotent of class 2.   We equip this group with the  usual Euclidean topology  of  $\RR^3$.  
\begin{prop}  $UT^3_3(\RR)$ does not admit a compatible bi-invariant metric. \end{prop}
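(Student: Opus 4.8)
The plan is to apply the characterisation stated just above: a topological group $G$ has a compatible bi-invariant metric if and only if $1_G$ has a countable base of neighbourhoods each of which is closed under conjugation. It therefore suffices to show that in $G = UT^3_3(\RR)$ the identity has \emph{no} neighbourhood base at all consisting of conjugation-invariant sets. First I would fix coordinates: identify the matrix with entry $a$ in position $(1,2)$, $b$ in position $(2,3)$ and $c$ in position $(1,3)$ with the triple $(a,b,c)\in\RR^3$, so that the given (Euclidean) topology is the standard one on $\RR^3$ and
\begin{equation*}
(a,b,c)\cdot(a',b',c')=(a+a',\,b+b',\,c+c'+ab'),\qquad (a,b,c)^{-1}=(-a,-b,ab-c).
\end{equation*}
In these coordinates the centre is $Z(G)=\{(0,0,z):z\in\RR\}$, namely the elements with $a=b=0$.

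Next I would compute conjugation. Applying the product rule twice gives, for $g=(a,b,c)$ and $h=(x,y,z)$,
\begin{equation*}
g\,h\,g^{-1}=(x,\,y,\,z+ay-bx).
\end{equation*}
So conjugation fixes the first two coordinates and translates the third by $ay-bx$. Specialising to $y=0$, the conjugacy class of any element $(t,0,0)$ with $t\neq 0$ is exactly the vertical line $\{(t,0,z):z\in\RR\}$, an unbounded subset of $\RR^3$.

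From here the conclusion is immediate. Let $U$ be any neighbourhood of $1_G=(0,0,0)$ that is closed under conjugation. Then $U$ contains a Euclidean ball about the origin, hence contains $(t,0,0)$ for some small $t>0$; being invariant under conjugation, $U$ then contains the entire line $\{(t,0,z):z\in\RR\}$, so $U$ is unbounded and in particular $U\not\subseteq B$, where $B$ is the open unit ball about the origin (itself a neighbourhood of $1_G$). Thus no family of conjugation-invariant neighbourhoods of $1_G$ can be a neighbourhood base, a fortiori no countable one, and by the characterisation quoted above $UT^3_3(\RR)$ admits no compatible bi-invariant metric.

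The only step carrying genuine content is the conjugation identity $g\,h\,g^{-1}=(x,y,z+ay-bx)$; everything else is bookkeeping. The points to be careful about are the sign and the placement of the $ab'$ term in the multiplication rule (an easy source of a slip), and the remark that "closed under conjugation" here means invariance under conjugation by \emph{every} element of $G$, so that a conjugation-invariant set is a union of full conjugacy classes. I would also note in passing that the argument only uses that arbitrarily small neighbourhoods of $1_G$ meet a non-central conjugacy class which is unbounded in the ambient metric, so the Heisenberg group is simply the most economical witness of this phenomenon; that extra generality is not needed for the stated proposition.
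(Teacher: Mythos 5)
Your proof is correct and rests on the same computation as the paper's: conjugation in $UT^3_3(\RR)$ fixes the first two parameters and translates the corner entry by an arbitrary real, so non-central elements arbitrarily close to the identity have unbounded conjugacy classes. The only difference is packaging — the paper argues directly with a putative compatible metric $d$ (an element within $d$-distance $1$ of $I$ has a conjugate outside the Euclidean box containing the unit $d$-ball, contradicting bi-invariance), whereas you route the same fact through the conjugation-invariant neighbourhood-base characterisation stated just above; both are fine.
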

\begin{proof} Suppose that $d$  is a compatible  metric on $UT^3_3(\RR)$. Given matrices

\bc  $A'=\begin{pmatrix}
1 & a' & c'\\
0 & 1 & b'\\
0 & 0 & 1\\
\end{pmatrix}$, $A=\begin{pmatrix}
1 & a & c\\
0 & 1 & b\\
0 & 0 & 1\\
\end{pmatrix}$,   \ec
we have 

$$A^{-1}A'A=\begin{pmatrix}
1 & a' & c'-ab'+a'b\\
0 & 1 & b'\\
0 & 0 & 1\\
\end{pmatrix}.$$ 

Since $d$ is compatible there exists $r>0$ such that  $B^d_1(I)$, the open ball of radius $1$ centred at $I$ with respect to $d$, is contained in the open set of matrices $\begin{pmatrix}
1 & x & z\\
0 & 1 & y\\
0 & 0 & 1\\
\end{pmatrix}$ with $|x|, |y|, |z| < r$. Again since $d$ is compatible, we can choose $A'$ with $a' \neq 0$ and    $d(A',I)<1$. Choosing  $b$ sufficiently large we have  $|c'-ab'+a'b|>r$. It follows that $d(A^{-1}A'A,I)\geq 1$. So $d(A',I)\neq d(A^{-1}A'A,I)$, whence    $d$ is not  bi-invariant.
\end{proof}

There however are non-abelian groups with finite-dimensional Euclidean topology equipped with a compatible bi-invariant metric. Consider for example the group $U(n)$ of unitary $n\times n$ matrices. This group is compact and hence has a bi-invariant metric. To be more concrete, it can for instance be   equipped with the Hilbert-Schmidt norm $\|\cdot\|_{HS}$, i.e. for $u\in U(n)$ we have $\|u\|_{HS}=\sqrt{\sum_{i,j} |u_{i,j}|^2}$. Then the corresponding distance is bi-invariant. Indeed, notice that the definition of the norm can be written using the trace, i.e. $\|u\|=\sqrt{\mathrm{tr}(u^*u)}$. Then direct calculation shows, using the property that $\mathrm{tr}(t^*ut)=\mathrm{tr}(u)$, that for any $u,v,t,t'\in U(n)$ we have $$\mathrm{tr}((tut'-tvt')^*(tut'-tvt'))=\mathrm{tr}((u-v)^*(u-v)).$$

Can the two   cases when a Polish group has a bi-invariant metric be combined?
\begin{question} Suppose the Polish group  $G$ has a closed abelian normal subgroup $A$ such that the Polish group $G/A$
is compact. Does $G$ admit a compatible bi-invariant metric? \end{question}

For finite groups, there is an interesting question: 

\begin{question} Does  the class $\+ C$ of  finite groups with bi-invariant metric form a Fraisse class? \end{question} 
In effect we are asking whether $\+ C$ has the amalgamation property (AP).
The AP  is known for finite groups by an old result of Hall (or possibly Neumann?). It seems to be unknown  as well for finite groups equipped with a   left (say) invariant metric.

\section{Nies: descriptive set theory for profinite groups}
A compact topological group $G$ is called profinite if  the clopen sets form a basis for the topology.  Equivalently, the open normal subgroups form a  base of neighbourhoods of  the identity. Since open subgroups of a compact group have finite index, this means that  $G$ is the inverse limit of its  system of finite quotients with the natural projection maps. 

In a group that is finitely generated as a profinite  group, all subgroups of finite index are open \cite{Nikolov.Segal:07}. This deep theorem implies that the topological structures is determined by the group theoretic structure. In particular, all abstract homomorphisms between such groups are continuous.

Let $\hat F_k$ be the free profinite group in $k$ generators ($ k \le \omega$). This is the inverse limit of the inverse system   $\seq{F_k/N}$  with the canonical maps, where $N$ ranges over the normal subgroups of finite index. (More generally, every countable residually finite group $G$ is embedded into a profinite group $\hat G$ in this way.)

 A presentation of a profinite group has the form \[ \hat F_k / N\]
where $N$ is a closed normal subgroup of $\hat F_k$.  One can also think of a presentation as an ``expression" 
\begin{equation} \la x_1, x_2, \ldots \mid r_1, r_2, \ldots \ra \label{eq:presentation} \end{equation}
where the list of generators $x_1, \ldots$ has length $k$, and the list of relators $r_i \in \hat F_k$  has length at most $\omega$, and of course  the list of relators is finite for the  case of f.p.\ profinite groups. We will see below that the two views are equivalent in a ``Borel" way. This means that the equivalence is carried out by functions that are Borel between      suitable Polish spaces of presentations on the one hand, and closed normal subgroups of $\hat F_k$ on the other hand; isomorphism is preserved in both directions. Generally, if $X,Y$ are Polish spaces and $E, F$ equivalence relations on $X,Y$ respectively, one writes $X,E \le_B Y,F$ (or simply $E \le_B F$) if there is a Borel function $g \colon X \to Y$ such that $Euv \lra F gu gv$ for each $u,v \in X$.


\begin{prop} The isomorphism relation $E_{f.g.}$ between  finitely generated profinite groups is Borel equivalent to $\mathtt{id}_\RR$, the identity equivalence relation  on $\RR$. The same holds for the isomorphism relation $E_{f.p.}$ of finitely \emph{presented} profinite groups. \end{prop}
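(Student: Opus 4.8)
The plan is to show both directions of the Borel reduction $E_{f.g.} \le_B \mathtt{id}_\RR$ (and likewise for $E_{f.p.}$), from which the Borel equivalence follows since a single-point-class equivalence relation on an uncountable Polish space and $\mathtt{id}_\RR$ are Borel bi-reducible whenever the source has perfectly many classes. The nontrivial content is the reduction $E_{f.g.} \le_B \mathtt{id}_\RR$, i.e.\ that finitely generated profinite groups up to isomorphism are \emph{concretely} (smoothly) classifiable. The reverse reduction $\mathtt{id}_\RR \le_B E_{f.g.}$ should come from exhibiting a Borel-indexed family of pairwise non-isomorphic f.g.\ (in fact f.p.) profinite groups of size continuum; for instance, for each set $S \subseteq \{ p : p \text{ prime}\}$ the group $\prod_{p \in S}\ZZ_p$, or a family of finite direct products realizing all possible ``profinite completion data'', gives $2^{\aleph_0}$ pairwise non-isomorphic examples, and one checks the map $S \mapsto$ (a presentation of) this group is Borel. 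Here one must be slightly careful that the examples are genuinely finitely generated as profinite groups; $\prod_{p\in S}\ZZ_p$ is topologically generated by one element, so it works, and a presentation can be written down Borel-uniformly in (the characteristic function of) $S$.

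For the forward direction, the key observation is that a finitely generated profinite group $G$ is determined up to isomorphism by the sequence of its finite quotients together with the projection maps — more precisely, by its inverse system $\seq{G/N}_{N \trianglelefteq_o G}$. Since $G$ is $k$-generated, every finite quotient is $k$-generated, and for each $n$ there are only finitely many $k$-generated groups of order $\le n$; moreover, by Nikolov–Segal (cited in the excerpt, \cite{Nikolov.Segal:07}) every finite-index subgroup is open, so the abstract group structure already pins down the topology, and two f.g.\ profinite groups are isomorphic iff they are isomorphic as abstract groups iff they have ``the same'' inverse system of finite quotients. Concretely, I would assign to $G$ the function $c_G \colon \NN \to \NN$ recording, for a fixed computable enumeration of all finite $k$-generated groups and all homomorphisms between them, which finite groups appear as quotients $G/N$ and which triangles of projections commute — equivalently, the isomorphism type of the directed diagram of finite continuous quotients of $G$. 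This $c_G$ is a complete invariant: $G \cong H$ iff $c_G = c_H$. Encoding $c_G$ as a real, this is the desired map to $\RR$.

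The main obstacle — and the step that needs real care — is verifying that this invariant is computed in a \emph{Borel} way from a presentation $\hat F_k / N$ (with $N$ a closed normal subgroup, coded as a point in the Effros Borel space of closed subsets of $\hat F_k$, or via the ``expression'' coding $\la x_1,\dots \mid r_1, r_2,\dots\ra$). One needs: (i) from the code of $N$, Borel-uniform access to the finite quotients $\hat F_k/(N M)$ as $M$ ranges over the (countably many, effectively listed) open normal subgroups of $\hat F_k$, and to the projection maps among them; this is where the two views of presentations being ``equivalent in a Borel way'' (promised just before the proposition) gets used, and checking whether a given relator lies in a given open normal subgroup is a \emph{clopen} condition, so the quotient structure is read off Borel-uniformly. (ii) Conversely, that $c_G$ is a \emph{reduction}, i.e.\ injective on isomorphism classes — this is exactly the statement that a finitely generated profinite group is the inverse limit of its finite quotient system and that this system is an isomorphism invariant, which is standard. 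For $E_{f.p.}$ one runs the identical argument restricted to the (Borel, indeed $F_\sigma$) subset of presentations with finitely many relators; the invariant and its verification are unchanged. I expect the write-up to spend most of its length on point (i), the Borel-uniformity of passing from a coded closed normal subgroup to its diagram of finite quotients.
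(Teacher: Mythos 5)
Your proposal is correct in outline, but it takes a genuinely different route from the paper, and one step deserves a sharper justification. For the smoothness direction $E_{f.g.}\le_B \mathtt{id}_\RR$ the paper builds no explicit invariant at all: it fixes the number of generators $k$, notes that $\hat F_k/S\cong \hat F_k/T$ iff $S$ and $T$ lie in the same orbit of the continuous action of $\aut(\hat F_k)$ on the space $\+ N(\hat F_k)$ of closed normal subgroups (Lubotzky's Prop.\ 2.2, which rests on a profinite Gasch\"utz lemma for lifting generating sets), and then uses that $\aut(\hat F_k)$ is a compact (indeed profinite) group, so the orbit equivalence relation is closed and hence smooth. Your route via a concrete complete invariant is perfectly viable and in some ways more informative, but you have misplaced where the work lies: the Borel-uniformity of reading off the finite continuous quotients from a presentation, which you expect to occupy most of the write-up, is routine (a continuous homomorphism of $\hat F_k$ onto a finite group $Q$ is determined by the images of the $k$ generators, so there are only finitely many, and ``all relators lie in its kernel'' is a countable conjunction of clopen conditions). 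The crux is the completeness of the invariant: that two finitely generated profinite groups with the same finite quotients are isomorphic is precisely the Dixon--Formanek--Poland--Ribes theorem (also in Fried--Jarden and Ribes--Zalesskii), which genuinely uses finite generation via a K\"onig's-lemma/compactness argument over the finitely many epimorphisms onto each finite group. Your stated reason --- that a profinite group is the inverse limit of its finite quotient system --- holds for \emph{all} profinite groups, for which the conclusion is false, and Nikolov--Segal is not what is needed either (isomorphisms here are topological anyway); so cite or prove the DFPR theorem. Also, recording ``which triangles of projections commute'' against a fixed enumeration is not canonically defined without choosing identifications; just use the set of isomorphism types of finite quotients, which suffices.

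For the hardness direction $\mathtt{id}_\RR\le_B E_{f.p.}$ the paper uses Lubotzky's family $\mathtt{SL}_2(\hat\ZZ)/\prod_{q\notin P}\mathtt{SL}_2(\ZZ_q)$, while your procyclic family $\prod_{p\in S}\ZZ_p$ works just as well and is simpler: it equals $\hat F_1/N_S$ where $N_S$ is the closed subgroup of $\hat F_1=\hat\ZZ\cong\prod_p\ZZ_p$ generated by the single element $x_S$ whose $p$-component is $1$ for $p\notin S$ and $0$ for $p\in S$, so it is a one-generator one-relator profinite presentation, $S\mapsto x_S$ is continuous, and distinct $S$ give non-isomorphic groups since $S$ is recovered from the finite quotients. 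So, modulo naming the DFPR theorem and cleaning up the coding of the invariant, your argument goes through; what the paper's approach buys instead is brevity, at the price of invoking Lubotzky's lifting result and the general fact that orbit equivalence relations of compact group actions are closed, hence smooth.
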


   An equivalence relation that is Borel-below $\mathtt{id}_\RR$ is called \emph{smooth}. We thank Alex Lubotzky for pointing out the crucial fact in \cite[Prop.\ 2.2]{Lubotzky:01} used below to show this smoothness of the isomorphism relation.
   
\begin{proof} To show that $\mathtt{id}_\RR \le_B E_{f.p.}$, we use the argument  of Lubotzky~\cite[Prop 6.1]{Lubotzky:05} that there are continuum many non-isomorphic  profinite groups that are f.p.\ as profinite groups. For a set $P$ of primes  let $$G_P= \prod_{p\in P} \mathtt{SL}_2(\ZZ_p)= \mathtt{SL}_2(\hat \ZZ)/ \prod_{q\not \in P} \mathtt{SL}_2(\ZZ_q).$$
Here $\ZZ_p$ is the profinite ring  of $p$-adic integers, and $\hat \ZZ$ is the completion of $\ZZ$, which is isomorphic to $\prod_{p \, \text{prime}} \ZZ_p$. The second equality shows that $G_P$ is finitely presented as a profinite group. Clearly the map  $P \to G_P$ is Borel, and $P= Q \lra G_P \cong G_Q$. 

We now show that $E_{f.g.} \le_B \mathtt{id}_\RR$. We note that Silver's dichotomy theorem, e.g.\ \cite[5.3.5]{Gao:09},  implies that any equivalence relation strictly Borel below $\mathtt{id}_\RR$ has  countably many classes. So  the plain result of Lubotzky~\cite[Prop 6.1]{Lubotzky:05} now already yields  the Borel equivalence $E_{f.p.} \equiv_B E_{f.g.} \equiv_B \mathtt{id}_\RR$. However, by the proof of the result explained  above,  we in fact don't need Silver's result.

The idea is as follows. At first  let us only consider  presentations in a  fixed number $k$ of generators. Let $ \+ N(\hat F_k)$ be the Polish space of normal closed subgroups of $\hat F_k$ (detail below). The Polish   group $G= \mathtt{Aut}(\hat F_k)$ acts continuously  on $\+ N(\hat F_k)$.   For $S, T \in \+ N(\hat F_k)$, we have \[ \hat F_k/S \cong \hat F_k/ T \lra  \ex \theta \in G \, [ \theta(S) = T] \]
by \cite[Prop.\ 2.2]{Lubotzky:01} (which uses a profinite version of Gasch\"utz's Lemma on lifting generating sets of finite groups).  Note that  $G$ is compact, and in fact, profinite \cite[Ex.\ 6 on page 52]{Wilson:98} (but not f.g.)  So its orbit equivalence relation on  $ \+ N(\hat F_k)$ is closed, and hence smooth; see e.g.\  \cite[5.4.7]{Gao:09}.
\end{proof}

We give some detail on the  descriptive set theory. To see that   $ \+ N(\hat F_k)$   is a Polish space, note that the compact subsets of a compact metric  space  $M$ form a complete metric  space $\+ K(M)$  with the Hausdorff distance. This space is   compact as well, and is  the completion of the space of finite subsets of $M$. In the case of a  compact Polish group $G$, which we equip with a bi-invariant metric $d$ as explained in Section~\ref{s:DouchaNies}, the normal subgroups form a closed subset of $\+ K(G)$. Firstly, if $\seq {U_n}\sN n$ is a sequence of subgroups converging to $U$, then $U$ is a subgroup: if $a,b \in U$, for each $\epsilon $, for large $n$  we can choose $a_n, b_n \in U_n$  with $d(a_n, a) < \epsilon$ and $d(b_n, b) < \epsilon$. Then $a_nb_n \in U_n$ and $d(a_nb_n, ab) < 3\epsilon$. Secondly, since the metric is bi-invariant, if  all the $U_n$ are normal in $G$, then so is $U$. 

(One can avoid compactness of $G$, as long as there is a bi-invariant  metric compatible with the topology: the closed normal subgroups of a Polish group $G$ form a Polish space, being a closed subset of the Effros space $\+ F (G)$ of non-empty closed sets in $G$. While this space is usually seen as  a Borel structure, it can be topologized using  the Wijsman topology,  the weakest topology that makes all the maps $C \to d(g,C)$, $C\in \+ F (G)$, continuous.)

The presentations in the second sense above form a Polish space $\hat F_k^{\le \omega}$. Let us check that one can pass between the two views of  presentations in a Borel way. For one direction, given $\hat F_k / N$, by the Selection Theorem of Kuratowski/Ryll-Nardzewski (e.g.\ \cite[14.1.4]{Gao:09}, we can pick a countable dense subset $r_1, r_2, \ldots$ of $N$ in a Borel way. 

For the converse direction,  suppose we are given a presentation in the sense of (\ref{eq:presentation}). We have to find in a Borel way the least closed normal subgroup $N$ of $\hat F_k$ containing all the relators $r_i$.  Since $\+ K (\hat F_k)$ is the completion of the metric space of  finite subsets of $\hat F_k$ under the Hausdorff distance, it suffices to  find for each $n$   a finite subset $V_n$ of $\hat F_k$ such that $d(V_n, N) \le 1/n$: then $\seq {V_n}\sN n$ is a Cauchy sequence converging to $N$. 

%

 Let $X$ be the abstract group generated by the   $x_i$, and let $R$ be the countable subgroup of $\hat F_k$ that is generated by all the conjugates of the relators by elements of $X$. Then $\ol R = N$. By compactness of $\hat F_k$, for each $n$ we can find in a Borel way a finite $V_n \sub R$ such that $d(V_n, R) \le 1/n$, and hence $d(V_n, N) \le 1/n$ as required.

\subsection{Complexity of isomorphism for separable profinite groups}  
We now consider profinite groups that    aren't necessarily finitely generated.
As before, we think of such a group as being given by a   presentation $ \hat F_k / N$, $N$ closed, but  now always  $k= \omega$. Note that now one has to explicitly require that   isomorphisms are continuous (while this was automatic for f.g.\ groups). 

The $N \in \+ N(\hat F_\omega)$ with $(\hat F_\omega) ' \sub N $  (commutator subgroup) form a closed subset of $\+ N(\hat F_\omega)$. To see this, note that it suffices to require that the (dense) countable group $(F_\omega)' $ is contained in $N$. This is the space $N_{ab} (\hat F_\omega)$ of  presentations of separable  abelian  profinite groups.  

 As pointed out by A.\ Melnikov, even isomorphism of these  groups  is quite complex.   Pontryagin  duality  (see e.g.\ \cite{Hofmann.Morris:06}) is a functor on the category of abelian locally compact   groups that associates  to each $G$     the  group $  G^*$ of continuous homomorphisms from $G$  into the circle $\mathbb T$,  with the compact-open topology (which coincides with  the topology inherited from the product topology if $G$ is discrete). For a morphism $\alpha \colon G \to H$ let $\alpha^* \colon H^* \to G^*$ be the morphism defined by $\alpha^*(\psi) = \alpha \circ \psi$. 
 
 The Pontryagin  duality theorem says that      $ G\cong ({G^*})^* $  via the application map, for each locally compact abelian group $G$.  A special case of this states that (discrete) abelian   torsion  groups $A$ correspond to  abelian   profinite groups (see \cite[Thm.\ 2.9.6]{Ribes.Zalesski:00} for a self-contained proof of this special case).  Then, as $A$ ranges through the abelian countable torsion  groups, $A^*$ ranges through the separable abelian  profinite groups, with $A \cong B$ iff $B^* \cong A^*$. So  isomorphism for abelian countable torsion groups is Borel equivalent to continuous isomorphism of  separable  abelian profinite groups. 
 
 The former  isomorphism relation  is closely related to the equivalence relation $\mathtt{id}(2^{< \omega_1})$ discussed in \cite[Section 9.2]{Gao:09}, which is modelled on the  classification of  countable abelian torsion groups via Ulm invariants.  Also see the diagram \cite[p.\ 351]{Gao:09} which shows that $\mathtt{id}(2^{< \omega_1})$ is strictly   between $\mathtt{id}_\RR$ and graph isomorphism.

 Nies has shown that isomorphism for general separable profinite groups is $S_\infty$-complete, which means it is of the same Borel complexity as isomorphism of countable graphs. The hardness part uses a construction of Alan Mekler   \cite{Mekler:81} coding ``nice" countable graphs $A$ into countable nilpotent groups $G(A)$  of class 2 and exponent an odd prime $p$.  The idea is now to replace $G(A)$ by  a certain profinite completion, and show that the graph can still be recovered by an interpretation.

 A computable Polish  group is given by a computable  metric  space together with computable group operations on it. The groups $\ZZ_p$ and $\hat F_k$ we discussed  above are computable.
 
 \begin{question} Determine the complexity of isomorphism for f.g.\ computable profinite groups. \end{question} 

\section[Khelif: a free metabelian groups]{Khelif: a free metabelian group of rank at least 2 is \\ bi-interpretable with the ring of integers}

(Translated from French and slightly expanded by Nies.)

Let $G$ be a free metabelian group of rank at least 2. Anatoly Khelif (ca.\ 2006) has shown that $G$ is bi-interpretable (see e.g.\ \cite[Ch.\ 5]{Hodges:93} or \cite{Nies:DescribeGroups})  with $(\NN, + \times)$. This implies that  $G$ and $\NN$ have the same model theoretic properties. For one thing, $G$ is a prime model of its theory. It also implies  that $G$ is quasi-finitely axiomatizable  in the sense of Nies~\cite{Nies:03}: there is a first-order sentence $\phi$ of which  $G$ is, up to isomorphism, the only finitely generated model. Another example of a quasi-finitely axiomatizable  group is the Heisenberg group $UT^3_3(\ZZ)$ over the integers. However, Khelif has shown that $UT^3_3(\ZZ)$   is not bi-interpretable with $\ZZ$ even with parameters  (see Thm.\ 7.16 in \cite{Nies:DescribeGroups}).  $G$ is also interesting because it is  a quasi-finitely axiomatizable group that is not finitely presented. In \cite{Nies:03} the first example    of such a group was obtained: the restricted wreath product $(\ZZ/ p\ZZ)  \wr \ZZ$.  See the 2007 survey~\cite{Nies:DescribeGroups} for background. 

\begin{thm}[A.\ Khelif] Let $G$ be a free metabelian group of rank $m \ge  2$. The group $G$ is bi-interpretable (in  parameters)  with the ring of integers. \end{thm}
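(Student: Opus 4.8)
The standard route to bi-interpretability of a group with $(\NN,+,\times)$ is to (1) interpret a copy of the ring $\ZZ$ inside $G$ using a carefully chosen tuple of parameters, (2) interpret $G$ back inside that copy of $\ZZ$ using the explicit presentation of $G$ as a free metabelian group, and (3) verify that the two interpretations compose, on each side, to the identity up to a $G$- (resp.\ $\ZZ$-) definable isomorphism. For a free metabelian group $G$ of rank $m\ge 2$, the crucial algebraic fact is the Magnus embedding: writing $G = F/F''$ where $F$ is free of rank $m$, $G$ embeds into the wreath-product-like matrix group $\ZZ^m \wr (G/G')$, and the abelianization $A = G/G' \cong \ZZ^m$ is a free $\ZZ$-module of rank $m$. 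The derived subgroup $G'$, viewed as a module over the group ring $\ZZ[A] = \ZZ[t_1^{\pm1},\dots,t_m^{\pm1}]$, is (by the Magnus embedding) isomorphic to the augmentation-ideal-type submodule of $\ZZ[A]^m$ — in particular it is a faithful $\ZZ[A]$-module. This is the engine that lets first-order logic see the ring structure.

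**Interpreting $\ZZ$ in $G$.** First I would fix generators $g_1,\dots,g_m$ of $G$ and use them (plus perhaps one element of $G'$) as parameters. The commutator $c = [g_1,g_2] \in G'$ is nontrivial, and the conjugation action of $G$ on $G'$ factors through $A = G/G'$, making $G'$ a $\ZZ[A]$-module; the orbit of $c$ under conjugation by powers of $g_1$ generates, over $\ZZ$, a copy of $\ZZ[t_1^{\pm1}]$ acting on a suitable cyclic piece. Concretely: the submodule $\ZZ[A]\cdot c$ of $G'$ is free of rank $1$ over $\ZZ[A]$ (faithfulness of the module plus freeness of $G/F''$-type computations), so it is abstractly $\ZZ[t_1^{\pm},\dots,t_m^{\pm}]$ as an abelian group with the $m$ commuting automorphisms "conjugate by $g_i$". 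Addition is the group operation on $G'$; the element $c$ itself is the multiplicative identity $1$; and multiplication is reconstructed from the module action: $t_i \cdot x$ is "conjugate $x$ by $g_i$", and a general product is got by writing one factor as a $\ZZ$-combination of monomials $t_1^{a_1}\cdots t_m^{a_m}\cdot c$ (these monomials are a definable set — they are exactly the conjugates of $c$ by words in the $g_i$, and exponents are recovered as iteration counts, which are definable once one has a single infinite cyclic order, e.g.\ the powers of $g_1$). This gives an interpretation of the ring $\ZZ[t_1^{\pm},\dots,t_m^{\pm}]$, hence of $\ZZ$ (as the definable subring generated by $1$), in $G$ with parameters.

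**Interpreting $G$ back, and composing.** Once $R := \ZZ[t_1^{\pm},\dots,t_m^{\pm}]$ is interpreted, the Magnus embedding gives an explicit description of $G$ as a set of pairs (an element of $A=\ZZ^m$ together with an element of the rank-$m$ free $R$-module satisfying the Fox-derivative "cocycle" relation) with an explicit multiplication — all of this is first-order over $R$, so $G$ is interpreted in $R$, hence in $\ZZ$. It remains to check the two composites. Going $G \to \ZZ \to G$: the composite isomorphism sends $g_i$ to its Magnus image, and one checks this map is definable in $G$ using the parameters $g_1,\dots,g_m$ — essentially because every element of $G$ has a normal form (an element of $A$ times an element of $G'$) whose coordinates in the interpreted $R$ are $G$-definable. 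Going $\ZZ \to G \to \ZZ$: one checks the copy of $\ZZ$ recovered from the Magnus data inside $G$ agrees, via a $\ZZ$-definable bijection, with the original $\ZZ$ — this is essentially bookkeeping on the exponents. Both composites being definable isomorphisms is exactly bi-interpretability in parameters.

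**Main obstacle.** The delicate point is \emph{definability of the exponent/iteration function} and, relatedly, showing the set of monomials $\{t_1^{a_1}\cdots t_m^{a_m}\cdot c\}$ with its $\ZZ^m$-indexing is first-order definable in $G$ without already assuming arithmetic — i.e.\ bootstrapping $(\NN,+,\times)$ from the module structure. The classical way around this is: the powers of a single generator $g_1$ give a definable copy of $(\ZZ,+)$ (a "standard" infinite cyclic group is definable because $G$ is finitely generated and one can pin down $\langle g_1\rangle$ or at least a definable infinite cyclic subset), multiplication on this copy is then recovered from the module action of $\langle g_1\rangle$ on a free rank-$1$ piece of $G'$ (multiplication in $\ZZ[t_1^{\pm}]$ "is" composition of the automorphisms), and faithfulness of the module guarantees no collapse. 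So the real work — and the part I'd expect to take the most care — is (a) isolating a \emph{definable} free cyclic subset of $G$, and (b) proving the rank-$1$ freeness of $\ZZ[A]\cdot c$ over $\ZZ[A]$ so that the ring multiplication is faithfully encoded; both follow from the Magnus embedding and Fox calculus, but the first-order translation of "free rank $1$ over the group ring" into the group language is where one must be most careful.
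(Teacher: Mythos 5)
There is a genuine gap, and it sits exactly where your ``main obstacle'' paragraph leaves off: you never actually give a first-order definition of the ring multiplication of the interpreted $\ZZ$ (equivalently, of the exponent map $k\mapsto g_1^k$ as a function of a \emph{coded} integer $k$, or of the $\ZZ^m$-indexing of the monomial conjugates of $c$). ``Exponents are recovered as iteration counts'' and ``multiplication is composition of the automorphisms'' are not first-order notions; iterating a definable map $k$ times is precisely the arithmetic you are trying to bootstrap, so as written the construction is circular. Even the preliminary step is not justified as stated: a definable copy of $(\ZZ,+)$ does not follow ``because $G$ is finitely generated'' — what actually saves you in a free metabelian group is that the centralizer of a primitive element consists exactly of its powers (or, in Khelif's argument, the double centralizer $CC(u)$ taken in a suitable quotient), and even granted such a definable infinite cyclic set, addition comes for free but multiplication does not: one needs a definable \emph{pairing} that converts composition of the conjugation automorphisms into a single first-order condition.

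This is the device the paper supplies and your outline lacks. Khelif first makes the quotient $S=G/[G,G']$ available: $G'$ is definable by Mal'cev's formula $\forall y\,[x,x^y]=1$, and $H=[G,G']$ is definable as bounded products of commutators $[x,y]$ with $x\in G$, $y\in G'$. The quotient $S$ is nilpotent of class $2$, so the commutator map is bilinear there ($[u^j,w]=[u,w]^{j}$), and integers are coded by admissible pairs $(u,u^k)$ with multiplication of exponents defined through this bilinearity (the clause defining $\otimes$ via $[u,r]=[v_0,r]\rightarrow[u,v_1]=[u,w]$) — the classical Mal'cev trick, as for the Heisenberg group. Only after exponentiation is defined in $S$ and lifted back to $G$ does the Laurent-polynomial module structure on $G'$ (free on the $[u_i,u_j]$) become definable, giving a definable bijection of $G'$, and then of $G$, with $\NN$. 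Your algebraic inputs — torsion-freeness of $G'$ over $\ZZ[A]$, hence rank-$1$ freeness of $\ZZ[A]\cdot c$, and the Magnus-embedding interpretation of $G$ in the ring — are correct and compatible with this, but the natural way to complete your step (a)/(b), e.g.\ via the congruence $[g_1^j,g_2^k]\equiv[g_1,g_2]^{jk}$ modulo $[G,G']$, is exactly the passage to the class-$2$ quotient that your proposal omits; without it, the heart of the theorem is unproved.
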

\begin{proof} The commutator subgroup $G'$ is definable in $G$  by the formula $\phi(x) \equiv \fa y \, [x,x^y]=1$ by a result of Mal'cev \cite{Malcev:60}. (It is easy to see that each commutator $[p,q]$ satisfies the formula, because $[p,q]^g$ is also a commutator.)

Now let $H = [G,G']$ be the third term of the descending central series of $G$. The subgroup $H$ is also definable  as the products of at most $m$ commutators of the form $[x,y]$ where $x \in G, y \in G'$. 

Let $S = G/H$. Let $CC(x)$ denote the bi-commutator of $x$, namely centralizer of the centralizer of $x$ (which contains all the powers of $x$). We say that $u \in S$ is \emph{primitive} if it satisfies the formula
\[ \fa v \in CC(u)  \fa w  \ex t  \, [u,t] = [v,w].\]
It is clear that a minimal set of generators of the   group $S$ only contains primitive elements.

\subsection{The copy of $\ZZ$ defined in $S$}
Unless otherwise noted quantifiers range over $S$. We say that a pair of elements $(u_0, u_1 )$ in $S$ is \emph{admissible} if $u_0$ is primitive and $u_1 \in CC(u_0)$. On the set $A$ of  admissible pairs we define an equivalence  relation by 
\[  (u_0, u_1) \sim (v_0, v_1)  \lra   \fa w, t [u_0, w]= [v_0, t] \to [u_1,w = v_1,w]  \]

\begin{claim} $(u_0, u_1) \sim (v_0, v_1)$ iff $\ex k \in \ZZ \, ( u_1 = u_0^k \lland v_1 = v_0^k)$. \end{claim}

So we can view $A/\sim$ as the domain of the copy of $\ZZ$. For the operations, note that  for every pair $(u', v') $ and $u$ there is $v$ such that $(u'v') \sim (u,v)$. Given that, it suffices to note  that   the following first-order definitions are compatible  with $\sim$:

$(u,v_0) \oplus (u,v_1) = ( u, v_0v_1)$

$(u,v_0) \otimes (u,v_1) = ( u, w)$ where $\fa r, s ([u,r] = [v_0,r] \to [u,v_1] = [u,w])$.

\begin{claim} $(A, \oplus, \otimes) /\sim $ is isomorphic to $ (\ZZ, +, \times)$. \end{claim}

\subsection{Defining exponentiation inside $G$} We can     define exponentiation on $S$ internally: given $u \in S$ and  $k \in \ZZ$, represented by  $(v,v^k) \in A$, $u^k$ is the unique element $u'\in S$ such that 
\[ \fa t  \, [u,w ] = [v,t]  \to  [ u', w = v^k, t]. \] 
To do this in $G$ rather than $S = G/H$, let $p \colon G \to S$ be the canonical projection. If $u \not \in H$, $u^k$ is the unique element $u'$  of $CC(u)$ such that $p(u') = p(u) ^k$. If $u \in H$, then $u^k$ is the unique element $u' \in H$ such that for each $v \in G - H$, there is $w \in H$ with $(uv)^k = u' [v,w]$.

\subsection{Defining in $G$ the  module structure of $G'$}

The conjugation action of $G$ on $G'$ introduces automorphisms of the abelian group $G'$. They  commute pairwise because $x^[g,h] =x $ for each $g,h \in G, x \in G'$.  Let $u_1, \ldots, u_m$ be a minimal set of generators for $G$, and let $f_i$ be the automorphism of $G'$ induced by $f_i$.  Then $G'$ is a $\ZZ[X_1, \ldots X_m, X_1^{-1}, \ldots X_m^{-1}$ module in a canonical way. This module is free with generators all the  $[u_i,u_j]$ for $i\neq j$.

We can now define the multiplication of an element of the ring and an element of $G'$. Let $P$ be a polynome in  $\ZZ[X_1, \ldots X_m$ and $u \in G'$.  Then $P\cdot u$ is the element $v \in G'$ such that for each $m$-tuple of integers  $\lambda_1 , \ldots , \lambda_m$, $v - P(\lambda_1 , \ldots , \lambda_m)u$ is in the submodule of $G'$ generated by the images of the $f_i^{-\lambda_i}$.  (...) We can therefore define a bijection between $G'$ and $\NN$. Since every element of $G$ is expressed uniquely   as a product of elements in $CC(u_1), \ldots, CC(u_m)$ and $G'$, one gets a definable bijection (in the sense of $G$ with parameters $u_1, \ldots, u_m$)  between $\NN$ and $G$. 
 \end{proof}

%

 \part{General topics}
\n  {\bf Principles common throughout  mathematics.}
On the occasion of     talks to    mathematics and general logic audiences in M\"unster and Paris, Andr\'e Nies  thought  about a unifying approach to mathematics. The goal was to isolate ideas and principles that occur in lots of areas perceived to be disconnected. 

\section{Objects of  greatest complexity in their class}

The following situation arises in many areas of mathematics  and theoretical computer science. Given a class of objects, together with a method to compare their complexity, is there a most complicated object in the class? Is such an object uniquely determined?

 \bi \item  The halting problem is a most complicated object in the class of   computably enumerable sets under many-onereducibility $\le_m$. It is unique up to computable permutations of $\NN$ by Myhill \cite{Myhill:55}. 

\item The satisfiability problem SAT is a most complicated object in NP (Cook/Levin, 1971/1973) under polynomial time many-one reduci\-bility $\le^p_m$.

\item Chaitin's $\Omega$ is  complete for left-c.e.\ reals under Solovay reducibility $\le_S$.  It can be described up to Solovay equivalence $\equiv_S$ as  the unique left-c.e.\ ML-random  \cite{Kucera.Slaman:01}.

 \item There is a most complicated  $K$-trivial set with respect to  ML-reduci\-bility by \cite{Bienvenu.Greenberg.ea:nd}.

\item There is a complete object in the class of countable abelian groups with embedding of structures.

 \item  Conjugacy of ergodic transformations is $\Sigma^1_1$-complete \cite{Foreman.Rudolph.etal:11}.
 
\item 
Isomorphism of separable $C^*$-algebras is  Borel is complete for orbit equivalence relations (\cite{Elliott.ea:nd} together with \cite{Sabok:13}). The same is true for homeomorphism of  compact metric spaces \cite{Zielinski:14}. \ei

Suppose  we are given a preordering $\le$ to compare the complexity of objects in the class $\+ C$. Why are we interested in complete objects $S$ for a class $\+ C$?  The answer depends on whether   we start with $\+ C$,  or with $S$. 

\bi \item[1.] \emph{Starting with $\+ C$.} The preordering is often very simple, and should definitely be simpler that the objects it is supposed to compare. (In fact it is sometimes not even mentioned explicitly.)  Usually $\+ C$ is also downward closed under $\le$. So the single object $S$, together with  the simple preordering $\le$, describes the whole class $\+ C$.

\item[2.] \emph{Starting with $S$.} A complete object $S$ for $\+ C$ is   often interesting on its  own right. Reflecting Tao and others, one can ask the question:  is $S$ random, or structured? For instance, Chaitin's $\Omega$ and the Rado graph are random. The halting problem and SAT are structured.

 The complexity of $S$ is   completely determined by proving it is complete for the natural class  $\+ C$ of objects it belongs to.  It is less clear how its randomness content is related to $\+ C$. 
\ei

We now  give some detail   for each example. Then we return to the general metamathematical goal,  by discussing analogs of Post's problem which stems from  computability theory.

\subsection{C.e.\ sets and the halting problem} Sets of natural number can be compared via many-one ($m$, for short) reducibility: $B \le_m A$ if $B= \ES$ or $B = f^{-1}(A)$ for some computable function $f$. Let $W_e$ be the $e$-th c.e.\ set. One version of the halting problem is  the effective join of all the $W_e$, which is clearly   $1$-complete. By the Myhill isomorphism theorem~\cite{Myhill:55},  an $m$-complete set is 
unique  up to a computable permutation of $\NN$.  

Being $m$-complete is equivalent to being creative. Interestingly, creativity   is first-order  definable in the lattice of c.e.\ sets by a result of Harrington (see \cite{Soare:87} or \cite[1.7.20]{Nies:book}).

\subsection{Languages in $\NP$ and the satisfiablity problem $\SAT$} Languages in complexity theory can be compared via polynomial time  $m$-reducibility: $B \le^p_m A$ if $B= \ES$ or $B = f^{-1}(A)$ for some polynomial time computable function $f$.  The Cook-Levin theorem from  the early 1970s  says that $\SAT$ is $\NP$-complete. 

The 1976 Berman-Hartmanis Conjecture asks whether   all $\NP$-complete sets are polynomial time isomorphic. Hundreds of $\NP$-complete problems have been studied. They  are all ``paddable".  Any two paddable polytime $m$-equivalent sets   are polytime isomorphic. However,  these problems are all ``natural"- it is not clear if this can be considered as evidence for the conjecture.  Mahaney~\cite{Mahaney:80} proved that sparse sets (i.e.\ with polynomial upper density) cannot be NP-complete unless P=NP. Is this also known  for sets that merely have  sub-exponential  upper density?

\subsection{Left-c.e.\ reals and Chaitin's $\Omega$} A left-c.e.\ real $\beta$ is given by  $\beta = \sup_s \beta_s$, where $\seq{\beta_s}\sN s$ is a nondecreasing, computable  sequence of rationals. Solovay reducibility is defined by  $\beta \le_S \alpha$  if for  given  computable approximations  $\seq{\beta_s}\sN s$ of $\beta$ and  $\seq{\aaa_s}\sN s$ of $\aaa$, there is  a computable increasing  function $g$ such that $\beta - \beta_{g(s)} = O(\alpha- \alpha_s )$.  Equivalently, there is a left-c.e.\ real $\gamma$   such that   $\tp{-d} \beta + \gamma = \alpha$. See \cite[3.2.28]{Nies:book} or  the monumental monograph \cite[Section 9.1]{Downey.Hirschfeldt:book}.  

An example of an $\le_S$ complete c.e.\ real   is  $\sum_e \tp{-e}\beta_e$, where $\seq{\beta_e}$ is an effective listing of the left-c.e.\ reals in $[0,1]$. Unlike the halting problem, we don't get any uniqueness other than being Solovay complete.  However, \Kuc\  and Slaman \cite{Kucera.Slaman:01} gave a description of the class of $S$-complete left-c.e.\ reals as the ones that are \ML\ random.  

\subsection{$K$-trivials}  There is no largest $K$-trivial with respect to  $\leT$, because each $K$-trivial is low:  \cite[Theorem 5.3.22]{Nies:book}   can be used to build a c.e.\ $K$-trivial not below a given low c.e.\ set.   

In a sense, Turing reducibility~$\leT $ is too fine for a meaningful complexity analysis of the $K$-trivials.  
We define ML-reducibility by  $B \le_{ML} A$ if for each ML-random $Y$, $Y \ge_T A$ implies that  $Y \ge_T B$. 

Bienvenu et al.\ \cite{Bienvenu.Greenberg.ea:nd}   have shown that some set $A$, which they call  a ``smart $K$-trivial", is complete for $\le_{ML}$ within the $K$-trivials. They define Oberwolfach randomness and show that the ML-randoms failing this stronger randomness property are precisely the ones computing all $K$-trivials. Then they build a c.e.\ $K$-trivial such  that no  ML-random Turing above it is Oberwolfach random. No direct characterisation of the class of smart $K$-trivials is known at present. 

\subsection{Structures under embedding}
We look at a class of countable structures under embedding $\preceq$.   Complete structures in this setting are often called \emph{universal}.

\bi \item For (symmetric) graphs there is a complete structure, the Rado (or random) graph. 

\item For linear orders, $(\QQ, <)$ is complete. \item For abelian  groups, there is a complete countable group $A$.  This is because the f.g.\ abelian groups have the amalgamation property. So one can build  $A$ as a Fraisse limit.

\item There is no  countable group that is $\preceq$ complete for  countable groups. For, there are continuum many non-isomorphic 2-generated groups (Higman, Neumann, and Neumann).   Only countably many can be isomorphic to a subgroup of  a given single countable group.
\ei

\begin{question}  Does every variety (in the sense of universal algebra) have  the amalgamation property  for finitely generated structures? \end{question} In that case, if there are only countably many f.g.\ structures in the variety, there is a $\preceq$-complete countable structure, namely  the Fra\'iss\'e limit of the f.g. structures. 
\vsp

The substructure relation $B \preceq A$ doesn't always  say that $B $ is less complex than $A$. The larger structure can ``erase" information from $B$, for instance  when  a linear order \ $B$ is embedded into the dense linear order $B \times \QQ$ (with the lex ordering). All the complexity now lies in the embedding. If $B$ is complicated, then  the  range of the embedding into  $B \times \QQ\cong \QQ$ is also complicated. 

If we require  the embeddings to be in some sense effective, this is no longer possible, and a meaningful theory of relative complexity emerges. We will study this in the simple case that the structure is a set with an equivalence relation  (ER). Firstly we consider the case of domain $\NN$, with computable embeddings. Thereafter we proceed to the case that the domain is an uncountable Polish space. 
 
\subsection[Complexity of equivalence relations]{Complexity of equivalence relations on $\NN$, and presentations of groups}

  We  define $m$-reducibility between ER  by  $ F \le_m E $ if there is a computable  function $g\colon \NN \to \NN$ such that $Fuv \lra E g(u) g(v)$.

\subsubsection*{$\PI n$ completeness} Ianovski et al.\  \cite{Ianovski.Miller.ea:14} showed that there is an $m$-complete $\PI 1$ equivalence relation, and no complete $\PI n$ equivalence relation for $n \ge 2$.   For a natural  example given by structures,  isomorphism of certain polynomial time computable trees   is a  complete $\PI 1$ equivalence relation by \cite{Ianovski:12}.  It is unknown whether  the $\PI 1$ equivalence relation of isomorphism of automatic ER is a complete; for background on this question see \cite{Kuske.Liu.ea:13}.

It is easy to build a $\SI n$ complete ER for each $n$:  they can be listed effectively, so it is sufficient to take the disjoint sum. How about  $\SI n$ completeness for naturally occurring equivalence relations? 

\subsubsection*{$\SI n$ completeness}  In computability theory,  Ianovski et al.\ \cite{Ianovski.Miller.ea:14} showed completeness at the relevant level for a number of degree equivalences on the c.e.\ sets. For instance,   $\equiv_T$  among c.e.\ sets  is $\SI 4$ complete.

Consider a finitely axiomatised variety $\+ V$ of groups, such as all groups, or the metabelian groups. It is easy to see that   isomorphism of $\+ V$-finitely presented groups is $\SI 1$. 

C.F.\ Miller \cite[p.\ 80]{Miller:71} has proved that isomorphism of   finitely presented groups is $m$-complete for $\SI 1$ equivalence relations. We don't know of similar results for more restricted varieties, such as the groups that are solvable at a fixed level.

 We can describe a f.g.\ nilpotent group by $F_c(n)/N$ where $F_c(n)$ is the  free nilpotent group of class $c$ and rank $n$, and $N$ is finitely generated as a normal subgroup. 
 
Nilpotency   is a $\SI 1$ property of a finite presentations of a  group. This can be seen as follows: for having nilpotency class $c$, it   is sufficient  that the generators $g_1, \ldots , g_k$ satisfy   the finitely many relations $[x_1,\ldots, x_{c+1}]=1$ (for in that case, all the $[x_1, \ldots, x_c]$ are in the centre, and so,  if $c>1$,  inductively $G/Z(G)$ is nilpotent of class $c-1$).  
 This  is a $\SI 1$ event.

 So isomorphism is $\SI 1$ as well. We can effectively  list all the presentations of nilpotent groups as $P_0, P_1, \ldots$ and see isomorphism as a relation among the $P_k$. 
 
 Of course, we can also take a finite presentation of $F_c(n)$, and add its relators to a finite presentation of a group in $n$ variables. The   versions of the isomorphism problem for class $c$-nilpotent groups we obtain by describing nilpotent groups in two different ways are $m$-equivalent. 
 
 Isomorphism of abelian f.g.\ groups is decidable. 
 \begin{question} For $c>1$ is   isomorphism of f.g.\ class $c$-nilpotent groups decidable? \end{question}

\subsection{Completeness for preorders} $m$-reductions between  c.e.\ preorders and the corresponding   completeness notions  have been studied beginning   with   \cite{Montagna.Sorbi:85}, and later e.g.\ in \cite{Ianovski.Miller.ea:14}. Implication of sentences under PA is $\SI 1$-complete \cite{Montagna.Sorbi:85}, and weak truth table reducibility on c.e.\ sets is $\SI 3$ complete \cite{Ianovski.Miller.ea:14}.   

\begin{question} The substructure relation  $G \preceq H$ among f.p.\ groups is merely  $\SI 2$ by definition. Is it properly $\SI 2$? 

\n The relation that $G$ is a retract of $H$ is $\SI 1$.  Is it $\SI 1$-complete as a preorder?
\end{question}
For examples of  $\Sigma^1_1$-complete ER on $\NN$, see Part IV of  the 2013 Logic Blog~\cite{LogicBlog:13}.

\subsection{Preliminaries: Choquet theory}   We now move on to examples of completeness in descriptive set theory. First some preliminaries. Choquet\footnote{Gustave Choquet was a student of the analyst  Arnaud Denjoy at ENS Paris in the 1930s.} theory starts out with a locally convex topological vector space $V$ over $\RR$. Such a vector space has a basis of the topology consisting of the  translations of convex sets $C$ that are \emph{balanced} (if $x \in C$ then $\lambda x \in C$ for each $|\lambda| \le 1$), and \emph{absorbent} ($V = \bigcup_n nC$).   This generalises the situation of balls in $\RR^n$.  

For instance, a normed space with the weak topology is locally convex. More generally, given any vector space $V$ and a collection $\+ F$ of linear functionals on it, $V  $  can be turned  into a locally convex topological vector space by giving it the weakest topology that makes all the  linear functionals in  $\+ F$ continuous.  

Locally convex topological vector spaces, more general than the  normed spaces, allow us to  study  interesting compact sets, such as the closed  unit ball in $W^*$ with the weak  $*$ topology, for a   Banach space $W$. (Banach himself  in the year 1932 showed that the unit ball is compact in this topology   for separable Banach space $W$ via a diagonalization argument not relying on the axiom of choice.  Alaoglu proved it   in  full generality  in his 1938  thesis at the  Univ.\ of Chicago, using Tychonoff's theorem, which needs the axiom of choice.)

  Consider a   compact convex set  $C \sub V$.   The set of \emph{extreme points} $E$  is the set of points $x$ in $S$ such that  $2x = y_0+y_1$ implies that $x = y_0=y_1$.  Given a vector space $W$, a  function $f \colon C \to W$ is \emph{affine} if $f(\frac 1 2 (x_0 + x_1)) = \frac 1 2 (f(x_0) + f(x_1))$. 
  
  
Let   $V $ be  the dual space of a   Banach space $W$. As mentioned, this is locally convex with the weak $*$ topology, and  the closed unit ball of $V$ is convex and compact. For $W=\+ C [0,1]$  this unit ball can be seen as the set of     measures $\mu $ on $[0,1]$ with a mass of at most $1$. 
  
  For a related example, consider a dynamical system $\la X,T \ra$ where $X$ is a topological space and $T \colon X \to X$. The  probability measures  on $X$ for which $T $ is invariant form a compact convex set in the space of (signed) Borel measures on $X$, which is the dual space of $\+ C(X)$ with the weak topology. The extreme points are the ergodic measures.  
\begin{definition}  A \emph{Choquet simplex}  is a compact convex set $S\sub V$ with the averaging condition   that every $x \in S$ is the barycentre of a unique probability measure $\mu$ on the set $E$ of extreme points:  $f(x) = \int f   d\mu$ for each continuous   affine   function $f \colon \, S \to \RR$.
\end{definition}

\subsection{Equivalence relations and Polish group actions}  
\label{s: eqrels}
We consider  equivalence relations on Polish spaces. Let $X,Y$ denote Polish spaces and $E,F$ equivalence relations. We  define Borel reducibility by  $ (Y,F) \le_B (X,E)$ if there is a Borel function $g\colon X \to Y$ such that $Euv \lra F g(u) g(v)$.  

Consider a Polish group action $G \curvearrowright X $. The corresponding orbit equivalence relation (OER) is $E^X_G = \{ \la u,v \ra \colon \ex g \in G \, g \cdot u =v\}$.   

An  equivalence relation is \emph{orbit complete} if it is Borel equivalent to  an orbit equivalence relation, and every orbit equivalence relation is Borel reducible to it. 

Separable structures can be encoded in various ways as points in a Polish space.  Polish spaces themselves are given via completion  by a distance matrix on a chosen dense sequence. The space $\+ M$ of all Polish metric  spaces is then a $G_\delta $ subset of $ \RR^{\NN \times \NN}$, namely the  functions satisfying the axioms for metric spaces. The compact metric spaces form a $\PI 3$ subset of $\+ M$ using that for metric spaces,  compact $\LR$  (complete $\&$  totally bounded).

Let $\+ B(H)$ denote  the algebra of continuous operators on separable Hilbert space with the topology given by the operator norm. A \emph{separable $C^*$-algebra} is a closed self-adjoint subalgebra of $\+ B(H)$.
Elliott et al.\ \cite{Elliott.ea:nd}   proved that 
isomorphism of separable $C^*$-algebras (with a suitable encoding as Polish metric structures)  is  Borel below an  orbit equivalence relation. They left open the question of orbit completeness. 

Farah, Toms and Tornquist \cite[Cor.\ 5.2]{Farah_etal:14}  Borel-reduced  the affine homeomorphism relation on Choquet simplices to  isomorphism of separable $C^*$-algebras (in fact, of a subclass, the unital simple AI-algebras).
Sabok~\cite{Sabok:13}  then obtained this orbit completeness by showing   
  that isometry of Polish metric spaces is Borel reducible to affine homeomorphism of Choquet simplices.

Using Sabok's result, Zielinski  \cite{Zielinski:14} proved  that  the  homeomorphism relation $\cong_h$ of  compact metric spaces is orbit complete. Similar to \cite{Farah_etal:14},  he  Borel-reduced  affine homeomorphism of Choquet simplices to $\cong_h$.  

Let $\+ C(X)$ be the space of complex valued continuous functions on $X$. Since for compact spaces we have $X \cong Y \lra \+ C(X) \cong \+ C(Y)$ as $C^*$-algebras, this shows that even isomorphism of commutative $C^*$-algebras is orbit complete. 

An example of an orbit complete OER is the OER obtained from the Borel action of $Iso(\mathbb U)$  on  $F(\mathbb U)$, the Effros algebra of Urysohn space. As a byproduct of   Zielinsky's  result and its proof, one  obtains another   example of an orbit complete OER, possibly    more natural than the  previously known ones.
\bi \item Let $\+ Q$ be Hilbert cube $[0,1]^\NN$ with the standard metric 
\bc $d(\ol x, \ol y)= \sum_n \tp{-n-1} |x_n- y_m|$.  \ec 
\item Let $G$ be the group of autohomeomorphisms of $\+ Q$, which is a Polish group with the metric $d(f,g) = d_\infty (f,g) + d_\infty(f^{-1}, g^{-1})$, where $d_\infty(f,g) = \sup_x d(fx,gx)$.  \item Let $X = \+K(\+Q)$ be the Polish space of compact (i.e.\ closed) subsets of $\+ Q$ with the Hausdorff distance. \ei 
The natural action $ G \curvearrowright X$ has an orbit complete OER $E$. Intuitively, to any compact metric space $M$  one can in a Borel fashion assign     ``small" compact  set $C_M \sub \+ Q$ that is  homeomorphic to $M$,  e.g.\ by using that $\+ Q \times \+ Q$ is homeomorphic to $\+ Q$. The smallness of these sets  implies that any homeomorphism between two of them extends to an autohomeomorphism of $\+ Q$. Thus $M \cong_h N  $ iff $C_M E C_N$. 
\vsp

\subsection{Ergodic theory} 
\label{ss:ergodic}	

\n  A Borel probability
 space is given by a probability measure on the Borel sets of a standard Polish space. Foreman, Rudolph and Weiss \cite{Foreman.Rudolph.etal:11} showed: 
\begin{theorem} \label{th:FRW} Conjugacy of ergodic transformations on a non-atomic Borel probability
 space  is analytic complete.  \end{theorem}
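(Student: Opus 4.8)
The plan is to fix a concrete model, the Polish group $\mathrm{Aut}(X,\mu)$ of invertible measure-preserving transformations of $([0,1],\mathrm{Leb})$ with the weak topology, and to treat the upper and lower bounds separately. The upper bound is routine: $S$ and $T$ are conjugate iff there is $\phi\in\mathrm{Aut}(X,\mu)$ with $\phi S\phi^{-1}=T$; since $\mathrm{Aut}(X,\mu)$ is a Polish group (composition and inversion continuous in the weak topology), the set of triples $(S,T,\phi)$ with $\phi S\phi^{-1}=T$ is closed, so its projection to the first two coordinates is analytic, and intersecting with the Borel set of pairs of ergodic transformations leaves it analytic. Hence the content is entirely in proving $\Sigma^1_1$-hardness.

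For hardness I would give a Borel reduction of a $\Sigma^1_1$-complete set to conjugacy viewed as a set of pairs. The natural target is the set $\mathrm{IF}$ of ill-founded trees on $\omega$: the goal is a Borel map $p\mapsto A_p$ from the Polish space of trees into ergodic transformations so that $p\in\mathrm{IF}$ iff $A_p$ is conjugate to $A_p^{-1}$ (equivalently one can reduce to conjugacy with a fixed reference system; these formulations are interchangeable for the completeness statement). The map is built by a cut-and-stack/symbolic construction: $A_p$ is realized as an increasing union of Rokhlin towers whose defining combinatorial data at each stage — the cut numbers and the positions of spacers, i.e.\ the finite ``building words'' of a symbolic presentation — is chosen so as to simulate a run through $p$, with branching in $p$ reflected in a branching choice of building words, and with the scheme arranged so that an infinite branch of $p$ makes the forward data and the reversed data match up. Unbounded cut numbers ensure every $A_p$ is ergodic.

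The heart of the proof, and the step I expect to be the main obstacle, is \emph{rigidity}: one must show that the isomorphism type of $A_p$ faithfully records the combinatorial parameters of its construction, so that for well-founded $p$ the transformation $A_p$ is not isomorphic to $A_p^{-1}$, and more generally non-branch-equivalent trees give non-isomorphic systems. The standard device is to insert ``synchronization''/marker subwords into the building words so that any abstract measure isomorphism must, almost everywhere, respect the tower structure; one then decodes such an isomorphism into a correspondence between the two constructions and, using ergodicity together with the uniqueness features engineered into the scheme, reads off the required agreement. Checking that $p\mapsto A_p$ is Borel as a map into $\mathrm{Aut}(X,\mu)$ is bookkeeping, since the tower data up to stage $n$ depends on only finitely much of $p$.

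Assembling these: analyticity from the first step; and from the rest a Borel map witnessing $p\in\mathrm{IF}\iff A_p\text{ conjugate to }A_p^{-1}$, hence $\Sigma^1_1$-hardness of conjugacy of ergodic transformations, which together with the upper bound gives analytic completeness. I would close with the remark that the same construction, read as $(p,q)\mapsto(A_p,A_q)$, in fact shows the conjugacy \emph{equivalence relation} on ergodic transformations is not Borel, a strictly stronger conclusion than completeness of the associated set.
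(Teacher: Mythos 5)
Your plan coincides with the paper's own (sketched) argument, which is the Foreman--Rudolph--Weiss construction: the upper bound is the routine analyticity estimate, and hardness comes from a Borel map sending a tree $p$ to a uniquely ergodic symbolic (cut-and-stack style) system $A_p$, built from stagewise word data $W_n$, such that $p$ is ill-founded iff $A_p$ is conjugate to $A_p^{-1}$. The only differences are cosmetic: you correctly reduce from ill-founded trees on $\omega$ (the blog's sketch writes subtrees of $2^{<\omega}$, where ill-foundedness would not be $\Sigma^1_1$-complete), and, like the paper, you defer the genuinely hard rigidity verification---that well-founded trees yield $A_p\not\cong A_p^{-1}$---to the detailed construction of Foreman, Rudolph and Weiss.
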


By a result of von Neumann, all non-atomic Borel probability
 spaces are measure theoretically isomorphic to the unit interval $ [0,1]$ with Lebesgue measure $\mu$. So we can restrict ourselves to conjugacy of ergodic transformations in  $\MPT$, the group of measure preserving transformations of $[0,1]$, with    transformations  $T_0,T_1$  identified if they agree outside a  null set.
 
 \subsubsection*{$\MPT$ as a Polish space} To make sense of \cref{th:FRW} we need a Polish topology on $\MPT$.  Here is some background. 
  
  An element  $T $ of  $\MPT$ gives rise to  the unitary operator $U_T$ on   the separable Hilbert space $\+ H= L^2([0,1], \mu)$ such that  $U_T(f) = f \circ T$. Note that the equivalence classes of bounded measurable functions are dense in $\+ H$. 
  
  A unitary operator $U$ is of the form $U_T$ iff \bi \item both $U$ and $U^{-1}$ preserve $L^\infty(X)$, i.e.\ the boundedness of (equivalence classes of) functions, and \item  $U(fg ) = U(f) U(g)$ for bounded $f,g$. \ei  See \cite[Thm. 2.4]{Walters:00}. 
  
The strong operator topology on $\+ B (\+ H)$ coincides with the weak operator topology on the set of unitary transformations $\+ U (\+ H)$. The space $ \+ U(\+ H)$  with this topology is separable; a compatible complete metric is for instance  \bc $d(S,T) = \sum_n \tp{-n-1} [||S(x_n)- T(x_n)|| +  ||S^{-1}(x_n)- T^{-1}(x_n)||$, \ec where $\seq{x_n}$ is a dense sequence in the unit ball of $\+ H$ \cite[I.9B]{Kechris:95}. The conditions above    make $\MPT$ a $G_\delta$ subset of $\+ U (\+ H)$, so it forms a Polish space. One can also directly induce this  topology on $\MPT$ using the Halmos metric, which is analogous to the metric  above:
  \bc $d(S,T) = \sum_n \tp{-n-1} [\mu (S(E_n) \Delta  T(E_n)) +  \mu (S^{-1}(E_n) \Delta  T^{-1}(E_n))]$, \ec
  \n where $\seq {E_n}\sN n$ is a list of sets generating  the $\sigma $-algebra, such as the rational closed intervals. 
  This directly turns  $\MPT$ into a Polish metric space.
 
 An operator $T \in \MPT$ is called \emph{ergodic}  if each $T$-invariant set is null or conull. Ergodicity  is known to be  a $G_\delta$ property on $\MPT$. To see this, one uses that $T$ is ergodic iff the Lebesgue  measure $\mu$  is  an extreme point of  the convex set of probability measures  on $[0,1]$ for which $T$ is invariant.

%
\subsubsection*{On the proof of Theorem~\ref{th:FRW} due to  \cite{Foreman.Rudolph.etal:11}} Given a subtree $ B$ of $ \strcantor$, Foreman, Rudolph  and Weiss build an ergodic operator $T_B$ such that $B$ has an infinite branch iff $T_{B}$ is conjugate to its inverse in $\MPT$. 
 
 They list the strings in $B$ as $\seq {\sss_n}$ so that $\sss_ n \prec \sss_k $ implies that $n < k$. Next, they  define sets $W_n(B)$ of words over $\{0,1\}$. If $\sss_n \prec \sss_k$ then all the words in $W_k(B)$ are concatenations of words in $W_n(B)$. Let $W(B) = \bigcup_n W_n(B)$.

 Let $\mathbb K(B)$ be the set of $f \in \{0, 1\}^\ZZ$ such that each block $f\uhr{[u,v]}$ is in $W(B)$. In symbolic dynamics, such a set  is called a sub-shift, namely it is closed and shift-invariant.  Let $T_B$ be the shift on $\mathbb K(B)$. Since the base space $\mathbb K(B)$ is   compact, it   carries  a shift-invariant (non-atomic?) probability measure $\mu$; by choosing the $W_n(B)$ in the right way, they  show that  it is unique, which makes the system $(\mathbb K(B), \mu, T_B)$ ergodic. They then verify that $[B] \neq \ES $ iff $T_B$ is conjugate to~$T_B^{-1}$. 
 
  It is not clear whether the construction is effective, because  they use some probabilistic argument near the end of the 58-page paper.

 By the von Neumann result mentioned above, and the fact that it is a Borel translation, we can assume that $T$ is in $\MPT$. 
 
 The big open question is:

\begin{question} Is the relation $E$ of conjugacy of ergodic measure-preserving transformations   $\le_B$-complete for orbit equivalence relations? \end{question}
Foreman had announced at some point  that $E$  is $\le_B$-hard for OER given by  $S_\infty$-actions; no paper on this has appeared so far.  

\subsection{Post's problem}

Frequently,  objects turn out to be  the most complicated in their class. This fact is familiar from computability theory (a~remote branch of mathematical logic formerly known as recursion theory). Post's problem asked whether there is a c.e.\ set intermediate between the computable sets and the halting problem in the sense of Turing reducibility. The answer was yes. However, natural c.e.\ sets that aren't outright computable usually end up having the same complexity as the halting problem.

In some   areas mentioned above, this is different.  

Subsection~\ref{s: eqrels} on orbit equivalence relations (OER): $S_\infty$ is the Polish group of permutations of $\NN$. Graph isomorphism is $\le_B$-complete for $S_\infty$-OER. This has been coded into lots of other ER, even isomorphism of countable Boolean algebras by Camerlo and Gao~\cite{Camerlo.Gao:01}. On the other hand, as they pointed out, isomorphism of countable torsion abelian groups is not complete. This uses  Ulm invariants, which are certain countable sequences of countable ordinals. The result  was proved by Friedman and Stanley~\cite{Friedman.Stanley:89}. 

 Subsection~\ref{ss:ergodic} on ergodic theory: Instead of conjugacy of ergodic transformations $S,T$ in $\MPT$, one can also consider the weaker relation of conjugacy of $U_S, U_T$ in the unitary group  $\+ U (\+ H)$ (i.e., one allows conjugating by elements that are not necessarily  of the form $U_R$ for any $R$ in $\MPT$). Via spectral theory, one can show that this relation is Borel.

\section{Describing a structure within a class}
 We want to describe a structure in a class up to isomorphism, using an appropriate formal language. Containment in the class is given as an   external condition.
 
For  finite structures in  a fixed finite signature, there  is always a description in first-order logic of length comparable to the size of the structure. An interesting question is   how short such a description can be. Nies and  Katrin Tent \cite{Nies.Tent:arxiv}   answered this question for finite groups,   compressing the group $G$  via  a first-order description of   length $O(\log^3 |G|)$. The   Higman-Sims formula  states that the number of non-isomorphic groups of order 
$p^n$ is $p^{O(n^{5/2}) + 2n^3/27}$.    By a counting argument, this shows that the bound obtained is close to optimal.

The Kolmogorov complexity of a finite mathematical  object is the length of a shortest description within an appropriate universal system of descriptions, such as a universal Turing machine. 
   If we encode a finite structure by a string, we can apply this measure of complexity; however, it is not invariant under isomorphism. It would be worthwhile to  study the invariant  Kolmogorov complexity $K_{\mathtt {inv}}(G)$ of a finite group $G$, which is defined as  the least Kolmogorov complexity of any $H \cong G$. It is not hard to see that $K_{\mathtt {inv}}(G)$ is bounded above  by the length of a shortest  first-order description of $G$ (plus a fixed additive constant). By the same counting argument, the Higman Sims fmla   implies that  
 for a  $p$-group  $G$,  $K_{\mathtt {inv}}(G)$ and the length of a shortest first-order description   are in fact quite close:  both are of the order  $\log^3|G|$.  What happens if we restrict to   other classes of finite  groups?

Within the class of  finitely generated groups, an interesting question is whether a group can be described at all by a single first-order sentence.  If so we call the  group  quasi-finitely axiomatizable (QFA), a notion introduced in~\cite{Nies:sepgroups}. For instance, this is the case for the Heisenberg group over $\mathbb Z$, and  for the restricted wreath product of a finite cyclic group with $\mathbb Z$ (the latter example is interesting here because it is not finitely presented).

Within  the class of countable structures over a countable signature $S$, there is always a description in $L_{\omega_1, \omega}(S)$, the extension of first-order language that allows  countable disjunctions over a set of formulas with a shared finite reservoir of free variables (Scott). For   the class of separable complete metric spaces, a similar result holds. The most natural logic  here is an extension of  Lipschitz logic for $S$ that allows  countably infinite  disjunctions.


%
%

\end{document}